\DeclareMathAlphabet{\mathcalligra}{T1}{calligra}{m}{n}
\DeclareFontShape{T1}{calligra}{m}{n}{<->s*[2.2]callig15}{}
\def\theequation{\@arabic\c@equation}
\newcommand{\mas}{\operatorname{Mas}}
\newcommand{\Mas}{\operatorname{Mas}}
\newcommand{\Mor}{\operatorname{Mor}}
\newcommand{\beq}{\begin{equation}}
\newcommand{\enq}{\end{equation}}
\newcommand{\bbC}{{\mathbb{C}}}
\newcommand{\bi}{\bibitem}
\renewcommand{\ln}{\text{\rm ln}}
\newcommand{\scripty}[1]{\ensuremath{\mathcalligra{#1}}}
\numberwithin{equation}{section}
\newcommand{\dom}{\operatorname{dom}}
\newcommand{\sech}{\operatorname{sech}}
\renewcommand{\ker}{\operatorname{ker}}
\theoremstyle{plain}
\newtheorem{theorem}{Theorem}[section]
\newtheorem{lemma}[theorem]{Lemma}
\theoremstyle{definition}
\newtheorem{definition}[theorem]{Definition}
\newtheorem{remark}[theorem]{Remark}
\newtheorem{claim}[theorem]{Claim}
\title{The Maslov and Morse indices for Schr\"odinger operators on $\mathbb{R}$}
\author[P.\ Howard, Y.\ Latushkin, and A.\ Sukhtayev]{P.\ Howard, Y. Latushkin, and A.\ Sukhtayev}
\address{Mathematics Department,
Texas A\&M University, College Station, TX 77843, USA}
\address{Mathematics Department,
University of Missouri, Columbia, MO 65211, USA}
\address{Mathematics Department,
Indiana University, Bloomington, IN 47405, USA}
\email{phoward@math.tamu.edu}
\email{latushkiny@missouri.edu}
\email{alimsukh@iu.edu}
\date{\today}
\keywords{Maslov index, eigenvalues}
\begin{document}

\begin{abstract} 
Assuming a symmetric potential that approaches constant endstates
with a sufficient asymptotic rate, we relate the Maslov and Morse
indices for Schr\"odinger operators on $\mathbb{R}$. In particular,
we show that with our choice of convention, the Morse index is 
precisely the negative of the Maslov index. 
\end{abstract}

\maketitle

\section{Introduction} \label{introduction}

We consider eigenvalue problems 
\begin{equation} \label{main}
\begin{aligned}
Hy &:= -y'' + V(x) y = \lambda y, \\
\dom (H) &= H^1 (\mathbb{R}),
\end{aligned}
\end{equation}
and also (for any $s \in \mathbb{R}$)
\begin{equation} \label{main_s}
\begin{aligned}
H_s y &:= -y'' + sy' +  V(x) y = \lambda y, \\
\dom (H_s) &= H^1 (\mathbb{R}),
\end{aligned}
\end{equation}
where $y \in \mathbb{R}^n$ and $V \in C(\mathbb{R})$ is a real-valued 
symmetric matrix satisfying the following asymptotic conditions:

\medskip
\noindent
{\bf (A1)} The limits $\lim_{x \to \pm \infty} V(x) = V_{\pm}$ exist, and 
for all $M \in \mathbb{R}$, 
\begin{equation*}
\int_{-M}^{\infty} (1 + |x|) |V(x) - V_+| dx < \infty; 
\quad
\int_{-\infty}^M (1+|x|) |V(x) - V_-| dx < \infty.
\end{equation*} 

\medskip
\noindent
{\bf (A2)} The eigenvalues of $V_{\pm}$ are all non-negative. We denote 
the smallest among all these eigenvalues $\nu_{\min} \ge 0$.

\medskip

Our particular interest lies in counting the number of negative
eigenvalues for $H$ (i.e., the Morse index). We proceed by relating 
the Morse index to the Maslov index, which is described in 
Section \ref{maslov_section}. In essence, we find that the 
Morse index can be computed in terms of the Maslov index, and that
while the Maslov index is less elementary than the Morse index, 
it can be computed (numerically) in a relatively straightforward way. 

The Maslov index has its origins in the work of V. P. Maslov 
\cite{Maslov1965a} and subsequent development by V. I. Arnol'd
\cite{arnold67}. It has now been studied extensively, both 
as a fundamental geometric quantity \cite{BF98, CLM, F, P96, rs93}
and as a tool for counting the number of eigenvalues on specified
intervals \cite{BJ1995, BM2013, CDB09, CDB11, Chardard2009, 
CJLS2014, DJ11, FJN03, J88, J88a, JM2012}. In this latter context, 
there has been a strong resurgence of interest following the 
analysis by Deng and Jones (i.e., \cite{DJ11}) for multidimensional
domains. Our aim in the current analysis is to rigorously develop
a relationship between the Maslov index and the Morse index in 
the relatively simple setting of (\ref{main}). Our approach is 
adapted from \cite{CJLS2014, DJ11, HS}. 

As a starting point, we define what we will mean by a {\it Lagrangian
subspace} of $\mathbb{R}^{2n}$.

\begin{definition} \label{lagrangian_subspace}
We say $\ell \subset \mathbb{R}^{2n}$ is a Lagrangian subspace
if $\ell$ has dimension $n$ and
\begin{equation*}
(Jx, y)_{\mathbb{R}^{2n}} = 0, 
\end{equation*} 
for all $x, y \in \ell$. Here, $(\cdot, \cdot)_{\mathbb{R}^{2n}}$ denotes
Euclidean inner product on $\mathbb{R}^{2n}$, and  
\begin{equation*}
J = 
\begin{pmatrix}
0 & -I_n \\
I_n & 0
\end{pmatrix},
\end{equation*}
with $I_n$ the $n \times n$ identity matrix. We sometimes adopt standard
notation for symplectic forms, $\omega (x,y) = (Jx, y)_{\mathbb{R}^{2n}}$.
In addition, we denote by $\Lambda (n)$ the collection of all Lagrangian 
subspaces of $\mathbb{R}^{2n}$, and we will refer to this as the 
{\it Lagrangian Grassmannian}.
\end{definition}

A simple example, important for intuition, is the case $n = 1$, for which 
$(Jx, y)_{\mathbb{R}^{2}} = 0$ if and only if $x$ and $y$ are linearly 
dependent. In this case, we see that any line through the origin is a 
Lagrangian subspace of $\mathbb{R}^2$. More generally, any Lagrangian 
subspace of $\mathbb{R}^{2n}$ can be spanned by a choice of $n$ linearly 
independent vectors in $\mathbb{R}^{2n}$. We will find it convenient to collect
these $n$ vectors as the columns of a $2n \times n$ matrix $\mathbf{X}$, 
which we will refer to as a {\it frame} (sometimes {\it Lagrangian frame}) 
for $\ell$. Moreover, we will often write $\mathbf{X} = {X \choose Y}$, 
where $X$ and $Y$ are $n \times n$ matrices.

Suppose $\ell_1 (\cdot), \ell_2 (\cdot)$ denote paths of Lagrangian 
subspaces $\ell_i: I \to \Lambda (n)$, for some parameter interval 
$I$. The Maslov index associated with these paths, which we will 
denote $\Mas (\ell_1, \ell_2; I)$, is a count of the number of times
the paths $\ell_1 (\cdot)$ and $\ell_2 (\cdot)$ intersect, counted
with both multiplicity and direction. (Precise definitions of what we 
mean in this context by {\it multiplicity} and {\it direction} will be
given in Section \ref{maslov_section}.) In some cases, the Lagrangian 
subspaces will be defined along some path 
\begin{equation*}
\Gamma = \{ (x (t), y(t)): t \in I\},
\end{equation*}    
and when it's convenient we'll use the notation 
$\Mas (\ell_1, \ell_2; \Gamma)$. 

We will verify in Section \ref{ODEsection} that under our assumptions
on $V (x)$, and for $\lambda < \nu_{\min}$, (\ref{main}) will have 
$n$ linearly independent solutions 
that decay as $x \to -\infty$ and $n$ linearly independent solutions
that decay as $x \to +\infty$. We express these respectively as 
\begin{equation*}
\begin{aligned}
\phi_{n+j}^- (x; \lambda) &= e^{\mu_{n+j}^- (\lambda) x} (r_j^- + \mathcal{E}_j^- (x;\lambda)) \\
\phi_j^+ (x; \lambda) &= e^{\mu_j^+ (\lambda) x} (r_{n+1-j}^+ + \mathcal{E}_j^+ (x;\lambda)),
\end{aligned}
\end{equation*}
with also
\begin{equation*}
\begin{aligned}
\partial_x \phi_{n+j}^- (x; \lambda) &= e^{\mu_{n+j}^- (\lambda) x} (\mu_{n+j}^- r_j^- + \tilde{\mathcal{E}}_j^- (x;\lambda)) \\
\partial_x \phi_j^+ (x; \lambda) &= e^{\mu_j^+ (\lambda) x} (\mu_j^+ r_{n+1-j}^+ + \tilde{\mathcal{E}}_j^+ (x;\lambda)),
\end{aligned}
\end{equation*}
for $j = 1,2,\dots,n$, where the nature of the $\mu_j^{\pm}$, $r_j^{\pm}$, and 
$\mathcal{E}_j^{\pm} (x; \lambda), \tilde{\mathcal{E}}_j^{\pm} (x; \lambda)$ 
are developed in Section \ref{ODEsection}. The only details we'll need for
this preliminary discussion are: (1) that we can continuously extend these
functions to $\lambda = \nu_{\min}$ (though they may no longer decay at 
one or both endstates), and (2) that under assumptions (A1) and (A2)
\begin{equation} \label{limits}
\lim_{x \to \pm \infty} \mathcal{E}_j^{\pm} (x;\lambda) = 0; 
\quad 
\lim_{x \to \pm \infty} \tilde{\mathcal{E}}_j^{\pm} (x;\lambda) = 0.
\end{equation}

We will verify in Section \ref{ODEsection} that if we create a frame
$\mathbf{X}^- (x; \lambda) = {X^- (x; \lambda) \choose Y^- (x; \lambda)}$ 
by taking $\{\phi_{n+j}^-\}_{j=1}^n$ as the columns of $X^-$ and 
$\{\phi_{n+j}^{- \, '}\}_{j=1}^n$ as the respective columns of $Y^-$ then 
$\mathbf{X}^-$ is a frame for a Lagrangian subspace, which we will 
denote $\ell^-$. Likewise, we can create a frame
$\mathbf{X}^+ (x; \lambda) = {X^+ (x; \lambda) \choose Y^+ (x; \lambda)}$ 
by taking $\{\phi_j^+\}_{j=1}^n$ as the columns of $X^+$ and 
$\{{\phi_j^+}'\}_{j=1}^n$ as the respective columns of $Y^+$. Then 
$\mathbf{X}^+$ is a frame for a Lagrangian subspace, which we will 
denote $\ell^+$.

In constucting our Lagrangian frames, we can view the exponential 
multipliers $e^{\mu_j^{\pm} x}$ as expansion coefficients, and if 
we drop these off we retain frames for the same spaces. That is, we 
can create an alternative frame for 
$\ell^-$ by taking the expressions $r_j^- + \mathcal{E}_j^- (x;\lambda)$
as the columns of $X^-$ and the expressions 
$\mu_{n+j}^- r_j^- + \tilde{\mathcal{E}}_j^- (x;\lambda)$ as the 
corresponding columns for $Y^-$. Using (\ref{limits}) we see that 
in the limit as $x$ tends to $-\infty$ we obtain the frame 
$\mathbf{R}^- (\lambda) = {R^- \choose S^- (\lambda)}$, where 
\begin{equation*}
\begin{aligned}
R^- &= 
\begin{pmatrix}
r_1^- & r_2^- & \dots & r_n^-
\end{pmatrix} \\
S^- (\lambda) &= 
\begin{pmatrix}
\mu_{n+1}^- (\lambda) r_1^- & \mu_{n+2}^- (\lambda) r_2^- & \dots & \mu_{2n}^- (\lambda) r_n^-
\end{pmatrix}. 
\end{aligned}
\end{equation*}
(The dependence on $\lambda$ is specified here to emphasize the fact 
that $S^- (\lambda)$ depends on $\lambda$ through the multipliers
$\{\mu_{n+j}^-\}_{j=1}^n$.) We will verify in Section \ref{ODEsection}  
that $\mathbf{R}^- (\lambda)$ is the frame for a Lagrangian subspace, 
and we denote this space $\ell_{\mathbf{R}}^- (\lambda)$.

Proceeding similarly with $\ell^+$, we obtain the asymptotic Lagrangian
subspace $\ell_{\mathbf{R}}^+ (\lambda)$ with frame 
$\mathbf{R}^+ (\lambda) = {R^+ \choose S^+ (\lambda)}$,
where 
\begin{equation} \label{RplusSplus}
\begin{aligned}
R^+ &= 
\begin{pmatrix} 
r_n^+ & r_{n-1}^+ & \dots & r_1^+
\end{pmatrix} \\
S^+ (\lambda) &= 
\begin{pmatrix} 
\mu_1^+ (\lambda) r_n^+ & \mu_2^+ (\lambda) r_{n-1}^+ & \dots & \mu_n^+ (\lambda) r_1^+
\end{pmatrix}. 
\end{aligned}
\end{equation}
(The ordering of the columns of $\mathbf{R}^+$ is simply a convention, which 
follows naturally from our convention for 
indexing $\{\phi_j^+\}_{j=1}^n$.) 

Let $\bar{\Gamma}_0$ denote the contour in the $x$-$\lambda$ plane
obtained by fixing $\lambda = 0$ and letting $x$ run from $-\infty$
to $\infty$.

We are now prepared to state the main theorem of the paper.

\begin{theorem} \label{main_theorem}
Let $V \in C(\mathbb{R})$ be a real-valued symmetric matrix, and suppose (A1)
and (A2) hold. Then 
\begin{equation*}
\Mor(H) = - \Mas(\ell^-, \ell^+_{\mathbf{R}}; \bar{\Gamma}_0).
\end{equation*}
\end{theorem}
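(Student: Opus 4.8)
I would use the classical homotopy argument on a rectangle in the $(x,\lambda)$-plane, in the spirit of \cite{DJ11,CJLS2014,HS}, organized in three stages: convert the negative eigenvalues of $H$ into Maslov crossings in the $\lambda$-direction, transfer that count to the $x$-direction by homotopy invariance, and show the remaining contributions vanish.

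First, for each $\lambda\le 0$ let $\ell^-(x;\lambda)$ and $\ell^+(x;\lambda)$ be the Lagrangian subspaces framed by $\{\phi^-_{n+j},\phi^{-\,\prime}_{n+j}\}_{j=1}^n$ and $\{\phi^+_j,\phi^{+\,\prime}_j\}_{j=1}^n$ as constructed above. By (A2) we have $\lambda\le 0\le\nu_{\min}$, the asymptotic systems $y''=(V_\pm-\lambda)y$ are hyperbolic (the borderline $\lambda=\nu_{\min}=0$ being covered by the continuous extensions noted after (\ref{limits})), and these spaces are well defined and continuous in $(x,\lambda)$. The point is that for any fixed $x_0$ one has $\ell^-(x_0;\lambda)\cap\ell^+(x_0;\lambda)\ne\{0\}$ precisely when $\lambda\in\sigma(H)$, with $\dim\big(\ell^-(x_0;\lambda)\cap\ell^+(x_0;\lambda)\big)$ equal to the multiplicity of the eigenvalue, and for $\lambda$ below the spectrum there is no such intersection, uniformly in $x$. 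Differentiating the eigenvalue relation in $\lambda$ and pairing with an eigenfunction $y_*$ shows the crossing form of the $\lambda$-path is definite, its sign governed by $\|y_*\|_{L_2}^2>0$; hence all $\lambda$-crossings contribute with the same sign and, with the orientation conventions adopted in this paper, $\Mas\big(\ell^-(x_0;\cdot),\ell^+(x_0;\cdot);[\lambda_\infty,0]\big)=-\Mor(H)$ for any $\lambda_\infty$ below the bottom of $\sigma(H)$.

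Next, run the homotopy. On the rectangle $\mathcal R_L=[-L,L]\times[\lambda_\infty,0]$ the path $(x,\lambda)\mapsto\ell^-(x;\lambda)$ and the reference $(x,\lambda)\mapsto\ell^+(x;\lambda)$ vary continuously, so the Maslov index around $\partial\mathcal R_L$ is zero; I would then identify the four sides after letting $L\to\infty$, using the asymptotics of Section~\ref{ODEsection} and (\ref{limits}) to check that all crossings stay in a bounded region (so the limit is legitimate) and that replacing $\ell^+(x;\lambda)$ by its $x\to+\infty$ limit $\ell^+_{\mathbf{R}}(\lambda)$ on the limiting contour introduces no new crossings. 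The side $x=+\infty$, $\lambda:\lambda_\infty\to 0$, then reproduces the $\lambda$-direction count of the first stage — an intersection $\ell^-(+\infty;\lambda)\cap\ell^+_{\mathbf{R}}(\lambda)\ne\{0\}$ signals an eigenvalue, where $\ell^-(+\infty;\lambda)$ is the limit of the suitably normalized unstable bundle — hence contributes $-\Mor(H)$; the side $\lambda=0$, traversed from $x=+\infty$ to $x=-\infty$, contributes $-\Mas(\ell^-,\ell^+_{\mathbf{R}};\bar{\Gamma}_0)$; the bottom side $\lambda=\lambda_\infty$ contributes zero by the uniform-in-$x$ absence of intersections for $\lambda$ far below $\sigma(H)$; and the side $x=-\infty$ contributes zero by transversality of $\ell^-_{\mathbf{R}}(\lambda)$ and $\ell^+_{\mathbf{R}}(\lambda)$ for all $\lambda\in[\lambda_\infty,0]$, which I would deduce from the hyperbolic structure of the limiting systems together with (A2). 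Adding the four contributions gives $-\Mor(H)-\Mas(\ell^-,\ell^+_{\mathbf{R}};\bar{\Gamma}_0)=0$, which is the assertion.

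\textbf{Main obstacle.} Two points carry the weight. The first is pinning down the sign: getting the crossing-form computation in the $\lambda$-direction exactly right — including at higher-multiplicity eigenvalues and at the endpoint $\lambda=0$ — and reconciling it with the orientation conventions so that the minus sign emerges. The second, which I expect to be the more laborious, is the limit management near $x=\pm\infty$ and $\lambda\to-\infty$: showing that $\ell^-(x;\lambda)$ has a genuine limit as $x\to+\infty$ despite its generating solutions growing, that the crossings along the contour are confined to a compact set so the $L\to\infty$ passage creates no spurious index, that the bottom and left sides truly vanish, and that the borderline case $\lambda=\nu_{\min}=0$ — where the $\phi^\pm$ need no longer decay — still produces bona fide Lagrangian subspaces $\ell^\pm_{\mathbf{R}}(0)$ and does not corrupt the top-side index. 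The ODE asymptotics of Section~\ref{ODEsection} are precisely what this step requires.
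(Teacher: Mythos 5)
Your overall architecture is the paper's: a Maslov box in the $(x,\lambda)$-plane, homotopy invariance forcing the total index around the boundary to vanish, monotonicity in $\lambda$ turning the $\lambda$-side into a signed count of eigenvalues, and vanishing contributions from the sides $\lambda=\lambda_\infty$ and $x=-\infty$. The monotonicity and the lower bound on $\sigma(H)$, the transversality of $\ell^-_{\mathbf{R}}(\lambda)$ and $\ell^+_{\mathbf{R}}(\lambda)$ for $\lambda<\nu_{\min}$, and the large-$|\lambda|$ estimates you invoke are all present in the paper (Lemmas \ref{monotonicity_lemma}, \ref{bottom_lemma}, \ref{left_lemma}).

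The genuine gap is in your treatment of the side at $x=+\infty$. You propose to let $L\to\infty$ and read off the eigenvalue count from intersections $\ell^-(+\infty;\lambda)\cap\ell^+_{\mathbf{R}}(\lambda)$, where $\ell^-(+\infty;\lambda)$ is ``the limit of the suitably normalized unstable bundle.'' But this limit, while well defined for each fixed $\lambda$, is \emph{not continuous in} $\lambda$: it jumps exactly at the eigenvalues of $H$, because there the slowest-growing mode in the expansion of the unstable bundle switches from a growth mode to a decay mode (this is Remark \ref{discontinuous} and the Appendix, where the $n=1$ computation shows $\tilde{\mathcal{W}}^+(\lambda)$ jumping from $-1$ at an eigenvalue to a value bounded away from $-1$ nearby). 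Consequently the limiting contour you want to use is not a continuous path in $\Lambda(n)$, its Maslov index is undefined, and one cannot ``identify the four sides after letting $L\to\infty$'' on that edge. Relatedly, your assertion that replacing the true reference by $\ell^+_{\mathbf{R}}(\lambda)$ ``introduces no new crossings'' is precisely what must be proved, and it is delicate at the corner $(x,\lambda)=(+\infty,0)$: when $\ell^-_{\mathbf{R}}(0)\cap\ell^+_{\mathbf{R}}(0)\ne\{0\}$, or when $\nu_{\min}=0$ so that $\lambda=0$ sits on the edge of the essential spectrum, crossings do accumulate at that corner (see the middle curve in Figure \ref{example1_figure}, which approaches $\Gamma_0$ asymptotically without crossing it). The paper's repair is to keep the right edge at a large finite $x_\infty$, freeze the reference Lagrangian at $\ell^+(x_\infty;\lambda)$, and then run a quantitative perturbation/homotopy argument (Lemma \ref{bottom_shelf_case1} and the Claim in Section \ref{proof_section}, with a separate treatment of the case $\kappa\ne0$ using a small $\lambda_0$-detour) to compare the resulting index with $\Mas(\ell^-,\ell^+_{\mathbf{R}};\bar\Gamma_0)$. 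Without some version of that finite-$x_\infty$ bookkeeping, your limit passage does not go through. A second, smaller caveat: your claim that the side $x=-\infty$ vanishes by transversality is only valid for $\lambda<\nu_{\min}$; at $\lambda=0=\nu_{\min}$ it can fail outright (the paper's example $V_\pm=0$ has all $n$ eigenvalues of $\tilde{\mathcal{W}}^-(0)$ at $-1$), and this endpoint must be handled by the monotonicity-in-$\lambda$ convention rather than by transversality.
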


\begin{remark} The advantage of this theorem resides in the fact
that the Maslov index on the right-hand side is generally 
straightforward to compute numerically. See, for example, 
\cite{BM2013, CDB06, CDB09, CDB11, Chardard2009}, and the 
examples we discuss in Section \ref{applications_section}.
The choice of $\lambda = 0$ for $\bar{\Gamma}_0$ is not necessary
for the analysis, and indeed if we fix any $\lambda_0$ so that
$\sigma_{ess} (H) \subset [\lambda_0, \infty)$ then 
$\Mas (\ell^-, \ell_{\mathbf{R}}^+; \bar{\Gamma}_{\lambda_0})$
will be negative the count of eigenvalues of $H$ strictly
less than $\lambda_0$. Since $\bar{\Gamma}_0$ plays a distinguished
role, we refer to $\Mas (\ell^-, \ell_{\mathbf{R}}^+; \bar{\Gamma}_{0})$
as the {\it Principal Maslov Index} (following \cite{HS}).
\end{remark}

\begin{remark} In Section \ref{maslov_section} our definition of
the Maslov index will be for compact intervals $I$. We will see 
that we are able to view $\ell^- (x; 0)$ as a continuous path 
of Lagrangian subspaces on $[-1,1]$ by virtue of the change 
of variables 
\begin{equation} \label{change} 
x = \ln (\frac{1 + \tau}{1 - \tau}).
\end{equation}
\end{remark}

We will verify in Section \ref{convection_section} that for
$s \in \mathbb{R}$, any eigenvalue of $H_s$ with real part
less than or equal to $\nu_{\min}$ must be real-valued. This 
observation will allow us to construct the Lagrangian subspaces 
$\ell^-$ and $\ell^+_{\mathbf{R}}$ in that case through a 
development that looks identical to the discussion above. We 
obtain the following theorem. 

\begin{theorem} \label{main_theorem_s}
Let $V \in C(\mathbb{R})$ be a real-valued symmetric matrix, and suppose (A1)
and (A2) hold. Let $s \in \mathbb{R}$, and let $\ell^-$ and $\ell^+_{\mathbf{R}}$
denote Lagrangian subspaces developed for (\ref{main_s}). Then 
\begin{equation*}
\Mor(H_s) = - \Mas(\ell^-, \ell^+_{\mathbf{R}}; \bar{\Gamma}_0).
\end{equation*}
\end{theorem}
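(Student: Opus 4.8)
The plan is to reduce Theorem \ref{main_theorem_s} to Theorem \ref{main_theorem} by an abstract-nonsense argument, rather than redoing the entire Maslov-index analysis for the convection operator $H_s$. The crucial structural observation — already flagged in the paragraph preceding the statement — is that the substitution $y = e^{sx/2} u$ conjugates $H_s$ to a pure Schr\"odinger operator: if $-u'' + su' + V(x)u = \lambda u$, then $w := e^{sx/2}u$ solves $-w'' + (V(x) - \tfrac{s^2}{4})w = \lambda w$. Thus $H_s$ and $\widetilde H := H_0$ with potential $\widetilde V := V - \tfrac{s^2}{4}I_n$ have the same eigenvalues (with multiplicities), so $\Mor(H_s) = \Mor(\widetilde H)$. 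Unfortunately $\widetilde V$ need \emph{not} satisfy (A2), since shifting the spectrum of $V_\pm$ down by $s^2/4$ can produce negative eigenvalues; so Theorem \ref{main_theorem} does not apply verbatim to $\widetilde H$. This is the main obstacle, and I address it below.

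First I would set up the conjugation carefully at the level of frames. Writing the first-order system for (\ref{main_s}) and comparing with the system for $\widetilde H$, the map $y \mapsto (e^{sx/2}y,\, e^{sx/2}(y' + \tfrac s2 y))$ carries solutions of one to solutions of the other; crucially, because the off-diagonal block structure is preserved and the multiplier $e^{sx/2}$ is scalar, this map sends Lagrangian frames to Lagrangian frames and, in the $x\to\pm\infty$ limit (using that $e^{sx/2}$ factors out as an overall scalar on each decaying solution, shifting $\mu_j^\pm$ by $s/2$), sends the asymptotic frames $\mathbf R^\pm(\lambda)$ for $H_s$ to the corresponding asymptotic frames for $\widetilde H$ — the position components $R^\pm$ are literally unchanged and the momentum components $S^\pm$ shift by $\tfrac s2 R^\pm$, which is exactly the relation between the frames dictated by the change of variables. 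Hence the Lagrangian paths $\ell^-(x;\lambda)$, $\ell^+_{\mathbf R}(\lambda)$ built for $H_s$ are carried, by a continuous (indeed smooth, $\lambda$-independent in the relevant way) family of symplectomorphisms of $\R^{2n}$, onto the analogous paths for $\widetilde H$. Since the Maslov index is invariant under a continuous path of symplectomorphisms applied simultaneously to both arguments, $\Mas(\ell^-,\ell^+_{\mathbf R};\bar\Gamma_0)$ computed for $H_s$ equals the same quantity computed for $\widetilde H$.

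It then remains to prove the identity $\Mor(\widetilde H) = -\Mas(\ell^-,\ell^+_{\mathbf R};\bar\Gamma_0)$ for the operator $\widetilde H = -\partial_x^2 + \widetilde V$ even though $\widetilde V$ may violate (A2). Here I would argue that (A2) is used in the proof of Theorem \ref{main_theorem} only to guarantee $0 \le \nu_{\min}$ and hence that $\lambda = 0$ lies below (or at the bottom of) $\sigma_{\ess}$, so that the contour $\bar\Gamma_0$ is "clean." Rather than chase that dependence, the cleaner route is to invoke the remark following Theorem \ref{main_theorem}: for \emph{any} $\lambda_0$ with $\sigma_{\ess}(\widetilde H)\subset[\lambda_0,\infty)$ one has $\Mas(\ell^-,\ell^+_{\mathbf R};\bar\Gamma_{\lambda_0})$ equal to minus the number of eigenvalues of $\widetilde H$ strictly below $\lambda_0$. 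Pick $\lambda_0 < -\tfrac{s^2}{4} \le \inf\sigma_{\ess}(\widetilde H)$ small enough that $\widetilde V - \lambda_0 I$ satisfies (A2); apply Theorem \ref{main_theorem} to $\widetilde H - \lambda_0$ (equivalently translate $\lambda$), obtaining $\Mor(\widetilde H - \lambda_0) = -\Mas(\ell^-,\ell^+_{\mathbf R};\bar\Gamma_{\lambda_0})$. Then a homotopy argument moving the fixed $\lambda$-level from $\lambda_0$ up to $0$ along the real axis — across which $H_s$, equivalently $\widetilde H$, has no complex eigenvalues with real part $\le \nu_{\min}$ by the result promised in Section \ref{convection_section}, and whose only crossings are the real eigenvalues of $\widetilde H$ in $[\lambda_0,0)$, none of which exist since $\Mor(\widetilde H)=\Mor(H_s)$ counts eigenvalues below $0$ and these coincide with eigenvalues of $\widetilde H - \lambda_0$ below $-\lambda_0$ only up to the shift — lets me transport the identity from level $\lambda_0$ to level $0$. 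Unwinding the shift, $\Mor(H_s) = \Mor(\widetilde H) = -\Mas(\ell^-,\ell^+_{\mathbf R};\bar\Gamma_0)$, which is the claim.

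The genuinely delicate points, and where I would spend the most care, are: (i) checking that the conjugation map is a symplectomorphism with the correct behavior on the \emph{asymptotic} frames $\mathbf R^\pm(\lambda)$ as well as on the solution frames, including continuity in $(x,\lambda)$ up to $\lambda = \nu_{\min}$ under the change of variables (\ref{change}); and (ii) justifying the spectral-shift/homotopy bookkeeping — one must confirm that the count of negative eigenvalues is genuinely preserved under $y = e^{sx/2}u$ (the map is a bijection on the relevant Sobolev spaces and intertwines the eigenvalue equations, so eigenvalues and multiplicities match, but self-adjointness is lost on the $H_s$ side and one should phrase "Morse index" via the algebraic eigenvalue count rather than via a quadratic form), and that no eigenvalue crosses the moving real $\lambda$-level unaccounted for, which is exactly where the no-complex-eigenvalues result of Section \ref{convection_section} earns its keep. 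Both obstacles are conceptual rather than computational, and once (i) is in place the Maslov side is essentially free.
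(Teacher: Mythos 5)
Your overall strategy---conjugate away the convection term and transfer Theorem \ref{main_theorem} wholesale---is a genuinely different route from the paper's, which instead verifies directly for $H_s$ the three ingredients that could fail (reality of the relevant point spectrum, the Lagrangian property of the frames, and monotonicity in $\lambda$) and then observes that the rest of the $s=0$ argument carries over, using the conjugation only to prove reality of eigenvalues. Unfortunately your version of the conjugation has the wrong sign, and the error propagates into the structure of the whole argument. If $-u''+su'+Vu=\lambda u$, the substitution that removes the first-order term is $w=e^{-sx/2}u$, not $e^{sx/2}u$, and it yields $-w''+(V+\tfrac{s^2}{4})w=\lambda w$: the potential shifts \emph{up} by $s^2/4$, exactly as in Section \ref{real_section} of the paper, where $\mathcal{H}_s=e^{-sx/2}H_se^{sx/2}=-\partial_x^2+\tfrac{s^2}{4}+V$. (Your $w=e^{sx/2}u$ satisfies $-w''+2sw'+(V-\tfrac{3s^2}{4})w=\lambda w$, which still contains a convection term.) Consequently the ``main obstacle'' you identify---that the transformed potential might violate (A2)---does not exist: the eigenvalues of $V_\pm+\tfrac{s^2}{4}$ are all at least $s^2/4>0$, so Theorem \ref{main_theorem} applies verbatim to the conjugated operator. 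Your entire third paragraph (retreating to a level $\lambda_0\ll 0$ and homotoping the spectral level back up to $0$) is therefore unnecessary, which is fortunate, because as written it is not a proof: the clause ``none of which exist since $\Mor(\widetilde H)=\Mor(H_s)$ counts eigenvalues below $0$\dots'' is circular, and moving the fixed $\lambda$-level from $\lambda_0$ to $0$ changes the Maslov index by precisely the number of eigenvalues crossed, which is the quantity being computed, not something you may assume vanishes.

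With the sign corrected your reduction can be made to work and would be an attractive alternative to the paper's argument, but two points still need proof rather than assertion. First, the phase-space map is ${y\choose y'}\mapsto e^{-sx/2}M{y\choose y'}$ with $M=\left(\begin{smallmatrix}I&0\\ -\tfrac{s}{2}I&I\end{smallmatrix}\right)$; you must check that $M$ is symplectic (it is), that the scalar factor $e^{-sx/2}$ carries the spaces of solutions decaying at $\pm\infty$ for $H_s$ onto those for $\mathcal{H}_s$ (this is exactly the paper's computation that $\mu_{n+j}^--\tfrac{s}{2}=\tfrac12\sqrt{s^2-4(\lambda-\nu_j^-)}>0$, and it is where $\lambda\le\nu_{\min}$ enters), and that the Maslov index of a pair of paths is invariant under the diagonal action of the fixed symplectomorphism $M$. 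That last fact is true but is not immediate from homotopy invariance under the paper's definition via the unitary $\tilde W$ of (\ref{tildeW}), since $M$ is not orthogonal and changes the frames nontrivially; one must verify that crossings, their multiplicities, and their directions are preserved. Second, since $H_s$ is not self-adjoint, $\Mor(H_s)$ only makes sense as a count of eigenvalues in the open left half-plane, and you need the reality result of Section \ref{real_section} to identify these with the negative eigenvalues of the conjugated self-adjoint operator, multiplicities included; you flag this in passing, but it is load-bearing rather than a footnote.
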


\begin{remark} As described in more detail in Sections 
\ref{convection_section} and \ref{applications_section},
equations of forms (\ref{main}) and (\ref{main_s})
arise naturally when a gradient system 
\begin{equation*}
u_t + F'(u) = u_{xx}
\end{equation*} 
is linearized about a stationary solution $\bar{u} (x)$
or a traveling wave solution $\bar{u} (x - st)$
(respectively). The case of solitary waves, for 
which (without loss of generality)
\begin{equation*}
\lim_{x \to \pm \infty} \bar{u} (x) = 0,
\end{equation*}
has been analyzed in 
\cite{BJ1995, CDB09, CDB11, CH2007, Chardard2009} 
(with $s \ne 0$ in \cite{BJ1995} and $s = 0$ in the
others). In particular, theorems along the lines of 
our Theorem \ref{main_theorem} (though restricted 
to the case of solitary waves) appear as Corollary 3.8
in \cite{BJ1995} and Proposition 35 in Appendix C.2 
of \cite{Chardard2009}. 
\end{remark}

{\it Plan of the paper.} In Section \ref{ODEsection} we develop 
several relatively standard results from ODE theory that will be
necessary for our construction and analysis of the Maslov index.
In Section \ref{maslov_section}, we define the Maslov index,
and discuss some of its salient properties, and in Section 
\ref{schrodinger_section} we prove Theorem \ref{main_theorem}.
In Section \ref{convection_section}, we verify that the analysis 
can be extended to the case of any $s \in \mathbb{R}$, and  finally, 
in Section \ref{applications_section} we provide some 
illustrative applications.

\section{ODE Preliminaries} \label{ODEsection}

In this section, we develop preliminary ODE results that will serve as the 
foundation of our analysis. This development is standard, and follows \cite{ZH}, pp. 
779-781. We begin by clarifying our terminology.

\begin{definition} \label{spectrum}
We define the point spectrum of $H$, denoted $\sigma_{pt} (H)$, as the set
\begin{equation*}
\sigma_{pt} (H) = \{\lambda \in \mathbb{R}: H \phi = \lambda \phi 
\, \, \text{for some} \, \, \phi \in H^1 (\mathbb{R}) \backslash \{0\}\}.
\end{equation*}
We define the essential spectrum of $H$, denoted $\sigma_{ess} (H)$, 
as the values in $\mathbb{R}$ that 
are not in the resolvent set of $H$ and are not isolated eigenvalues
of finite multiplicity.
\end{definition}

As discussed, for example, in \cite{Henry, KP}, the essential spectrum of $H$ is determined
by the asymptotic equations 
\begin{equation} \label{asymptotic}
- y'' + V_{\pm} y = \lambda y.
\end{equation}
In particular, if we look for solutions 
of the form $y (x) = e^{i k x} r$, for some scalar constant $k \in \mathbb{R}$
and (non-zero) constant vector $r \in \mathbb{R}^n$ then the essential spectrum 
will be confined to the allowable values of $\lambda$. For (\ref{asymptotic}),
we find 
\begin{equation*}
(k^2 I + V_{\pm}) r = \lambda r,
\end{equation*}
so that 
\begin{equation*}
\lambda(k) \ge \frac{(V_{\pm}r, r)}{\|r\|^2}.
\end{equation*}
Applying the min-max principle, we see that if the eigenvalues of 
$V_{\pm}$ are all non-negative then we will have 
$\sigma_{ess} (H) \subset [0, \infty)$, and more generally 
if $\nu_{\min}$ denotes the smallest eigenvalue of $V_{\pm}$ then we will
have $\sigma_{ess} (H) \subset [\nu_{\min}, \infty)$. 

Away from essential spectrum, we begin our construction of asymptotically
decaying solutions to (\ref{main}) by looking for solutions of 
(\ref{asymptotic}) of the form $\phi (x;\lambda) = e^{\mu x} r$, 
where in this case $\mu$ is a scalar function of $\lambda$, and 
$r$ is again a constant vector in $\mathbb{R}^n$. In this case, 
we obtain the relation 
\begin{equation*}
(-\mu^2 I + V_{\pm} - \lambda I) r = 0,
\end{equation*}
from which we see that the values of $\mu^2 + \lambda$
will correspond with eigenvalues of $V_{\pm}$, and 
the vectors $r$ will be eigenvectors of $V_{\pm}$. We denote the 
spectrum of $V_{\pm}$ by $\sigma (V_{\pm}) = \{\nu_j^{\pm}\}_{j=1}^n$,
ordered so that $j < k$ implies $\nu_j^{\pm} \le \nu_k^{\pm}$, and 
we order the eigenvectors correspondingly so that 
$V_{\pm} r_j^{\pm} = \nu_j^{\pm} r_j^{\pm}$ for all 
$j \in \{1, 2, \dots, n\}$. Moreover, since $V_{\pm}$ are symmetric
matrices, we can choose the set $\{r_j^{-}\}_{j=1}^n$ to be
orthonormal, and similarly for $\{r_j^{+}\}_{j=1}^n$. 

We have 
\begin{equation*}
\mu^2 + \lambda = \nu_j^{\pm} \implies 
\mu = \pm \sqrt{\nu_j^{\pm} - \lambda}.
\end{equation*}
We will denote the admissible values of $\mu$ by $\{\mu_j^{\pm}\}_{j=1}^{2n}$,
and for consistency we choose our labeling scheme so that 
$j < k$ implies $\mu_j^{\pm} \le \mu_k^{\pm}$ (for $\lambda \le \nu_{\min}$).
This leads us to the specifications 
\begin{equation*}
\begin{aligned}
\mu_j^{\pm} (\lambda) &= - \sqrt{\nu_{n+1-j}^{\pm} - \lambda} \\
\mu_{n+j}^{\pm} (\lambda) &= \sqrt{\nu_{j}^{\pm} - \lambda},
\end{aligned}
\end{equation*}
for $j = 1, 2, \dots, n$.  

We now express (\ref{main}) as a first order system, with 
$\mathbf{p} = {p \choose q} = {y \choose y'}$. We find 
\begin{equation} \label{first_order}
\frac{d \mathbf{p}}{dx} = \mathbb{A} (x; \lambda) \mathbf{p}; \quad
\mathbb{A} (x; \lambda) = 
\begin{pmatrix}
0 & I \\
V(x) - \lambda I & 0
\end{pmatrix},
\end{equation}
and we additionally set
\begin{equation*}
\mathbb{A}_{\pm} (\lambda) := \lim_{x \to \pm \infty} \mathbb{A} (x; \lambda)
= 
\begin{pmatrix}
0 & I \\
V_{\pm} - \lambda I & 0
\end{pmatrix}.
\end{equation*}
We note that the eigenvalues of $\mathbb{A}_{\pm}$ are precisely the values 
$\{\mu_j^{\pm}\}_{j=1}^{2n}$, and the associated eigenvectors are 
$\{\scripty{r}_{\,j}^{\,\pm}\}_{j=1}^n = 
\{{ r_{n+1-j}^{\pm} \choose \mu_j^{\pm} {r_{n+1-j}^{\pm}}}\}_{j=1}^n$ and 
$\{\scripty{r}_{\,n+j}^{\,\pm}\}_{j=1}^n = 
\{{r_{j}^{\pm} \choose {\mu_{n+j}^{\pm} {r_{j}^{\pm}}}}\}_{j=1}^n$.

\begin{lemma} \label{ODElemma}
Let $V \in C(\mathbb{R})$ be a real-valued symmetric matrix, and suppose (A1)
and (A2) hold. Then for any $\lambda < \nu_{\min}$ there exist $n$ 
linearly independent solutions of 
(\ref{first_order}) that decay as $x \to -\infty$ and $n$ linearly independent 
solutions of (\ref{first_order}) that decay as $x \to +\infty$. Respectively, 
we can choose these so that they can be expressed as
\begin{equation*}
\begin{aligned}
\mathbf{p}_{n+j}^- (x; \lambda) &=  e^{\mu_{n+j}^- (\lambda) x}
(\scripty{r}_{\,n+j}^{\,-} 
+ \mathbf{E}_{n+j}^-); \quad j = 1, 2, \dots, n, \\
\mathbf{p}_j^+ (x; \lambda) &=  e^{\mu_{j}^+ (\lambda) x}
(\scripty{r}_{\,j}^{\,+} 
+ \mathbf{E}_j^+); \quad j = 1, 2, \dots, n, 
\end{aligned}
\end{equation*}
where for any $\lambda_{\infty} > 0$,
$\mathbf{E}_{n+j}^{-} = \mathbf{O} ((1+|x|)^{-1})$,
uniformly for $\lambda \in [-\lambda_{\infty}, \nu_{\min}]$, and 
similarly for $\mathbf{E}_j^+$.

Moreover, there exist $n$ linearly independent solutions of 
(\ref{first_order}) that grow as $x \to -\infty$ and $n$ linearly independent 
solutions of (\ref{first_order}) that grow as $x \to +\infty$. Respectively, 
we can choose these so that they can be expressed as
\begin{equation*}
\begin{aligned}
\mathbf{p}_{j}^- (x; \lambda) &=  e^{\mu_{j}^- (\lambda) x}
(\scripty{r}_{\,j}^{\,-} 
+ \mathbf{E}_{j}^-); \quad j = 1, 2, \dots, n, \\
\mathbf{p}_{n+j}^+ (x; \lambda) &=  e^{\mu_{n+j}^+ (\lambda) x}
(\scripty{r}_{\,n+j}^{\,+} 
+ \mathbf{E}_{n+j}^+); \quad j = 1, 2, \dots, n, \\ 
\end{aligned}
\end{equation*}
where for any $\lambda_{\infty} > 0$, 
$\mathbf{E}_j^{-} = \mathbf{O} ((1+|x|)^{-1})$,
uniformly for $\lambda \in [-\lambda_{\infty}, \nu_{\min}]$,
and similarly for $\mathbf{E}_{n+j}^+$.

Finally, the solutions extend continuously as $\lambda \to \nu_{\min}$
(from the left) 
to solutions of (\ref{main}) that neither grow nor decay at the
associated endstate.  
\end{lemma}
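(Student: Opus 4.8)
\emph{Proof strategy.} The argument is a standard Levinson-type construction, following \cite{ZH}: one conjugates away the leading exponential, recasts the existence problem as an integral equation whose kernel is assembled from spectral projections of $\mathbb{A}_{\pm}(\lambda)$, and solves by a contraction mapping in a weighted sup-norm space, with the $(1+|x|)$-weighted integrability in (A1) delivering the $\mathbf{O}((1+|x|)^{-1})$ rate. I treat the $n$ solutions decaying at $+\infty$; the decaying solutions at $-\infty$ follow after the reflection $x \mapsto -x$, and the growing solutions follow by replacing each relevant spectral splitting with its complement and integrating in the opposite direction, so I suppress those cases.

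First I would reduce to an integral equation. For $j \in \{1,\dots,n\}$ write $\mathbf{p}_j^+(x;\lambda) = e^{\mu_j^+(\lambda) x}\mathbf{v}(x)$; substituting into (\ref{first_order}) gives $\mathbf{v}' = (\mathbb{A}_+(\lambda) - \mu_j^+(\lambda) I)\mathbf{v} + \mathbb{B}(x)\mathbf{v}$, where $\mathbb{B}(x) := \mathbb{A}(x;\lambda) - \mathbb{A}_+(\lambda) = \begin{pmatrix} 0 & 0 \\ V(x) - V_+ & 0 \end{pmatrix}$ is independent of $\lambda$ and, by (A1), obeys $\int_0^\infty (1+t)\,\|\mathbb{B}(t)\|\,dt < \infty$; in particular $\int_x^\infty \|\mathbb{B}(t)\|\,dt = \mathbf{O}((1+x)^{-1})$. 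Let $\Pi_j^+(\lambda)$ be the spectral projection of $\mathbb{A}_+(\lambda)$ onto the sum of the generalized eigenspaces for eigenvalues $\mu$ with $\mu \ge \mu_j^+(\lambda)$, and $\Pi_j^-(\lambda) = I - \Pi_j^+(\lambda)$. Since the ordered multipliers $\mu_1^{\pm} \le \dots \le \mu_{2n}^{\pm}$ depend continuously on $\lambda$ and retain their ordering on $[-\lambda_\infty, \nu_{\min}]$ (grouping coincident ones), the spectral clump $\{\mu \ge \mu_j^+\}$ stays separated from its complement, so $\Pi_j^{\pm}$ are uniformly bounded there. One then poses the fixed-point equation
\begin{equation*}
\mathbf{v}(x) = \scripty{r}_{\,j}^{\,+} - \int_x^\infty e^{(x-t)(\mathbb{A}_+ - \mu_j^+ I)} \Pi_j^+\, \mathbb{B}(t)\, \mathbf{v}(t)\, dt + \int_{x_0}^x e^{(x-t)(\mathbb{A}_+ - \mu_j^+ I)} \Pi_j^-\, \mathbb{B}(t)\, \mathbf{v}(t)\, dt,
\end{equation*}
for $x_0$ large, noting that on $\Ran \Pi_j^+$ the exponent has nonnegative real part (so the kernel is bounded for $t \ge x$), while on $\Ran \Pi_j^-$ it has real part $\le \mu_{j-1}^+ - \mu_j^+ < 0$, so the second kernel decays at a rate $\delta = \mu_j^+ - \mu_{j-1}^+$ which stays bounded below on $[-\lambda_\infty, \nu_{\min}]$.

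Next I would run the contraction. In the space of continuous $\mathbf{v}$ on $[x_0,\infty)$ with $\sup_{x \ge x_0}(1+x)\|\mathbf{v}(x) - \scripty{r}_{\,j}^{\,+}\| < \infty$, the right-hand side is a contraction once $x_0$ is large (its Lipschitz constant is controlled by $\int_{x_0}^\infty \|\mathbb{B}\|$, uniformly in $\lambda$), and its unique fixed point satisfies $\|\mathbf{v}(x) - \scripty{r}_{\,j}^{\,+}\| \le C\big(\int_x^\infty \|\mathbb{B}(t)\|\,dt + \int_{x_0}^x e^{-\delta(x-t)}\|\mathbb{B}(t)\|\,dt\big) = \mathbf{O}((1+x)^{-1})$, the backward integral being estimated by splitting at $x/2$ and using that the tail of $(1+t)\|\mathbb{B}(t)\|$ is $\mathbf{O}((1+x)^{-1})$; all constants are uniform for $\lambda \in [-\lambda_\infty, \nu_{\min}]$, the one delicate point — a multiplier approaching $0$ — being isolated in the last paragraph. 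This defines $\mathbf{p}_j^+$ on $[x_0,\infty)$, which we extend to $\mathbb{R}$ via the flow of (\ref{first_order}); setting $\mathbf{E}_j^+ := \mathbf{v} - \scripty{r}_{\,j}^{\,+}$ yields the claimed form. Linear independence of the $n$ solutions follows from their distinct leading exponential rates (and, within a shared rate, from the associated Jordan data). Finally, continuity of $\mathbf{p}_j^+(\cdot;\lambda)$ in $\lambda$ up to $\nu_{\min}$ — hence the last sentence of the lemma — follows from continuity of $(\mu_j^+(\lambda), \scripty{r}_{\,j}^{\,+}, \Pi_j^+(\lambda))$ on $[-\lambda_\infty, \nu_{\min}]$ together with uniqueness and parameter-continuity of the fixed point; since the ``slow'' multiplier vanishes at $\lambda = \nu_{\min}$, that solution converges to the (bounded, non-decaying) constant $\scripty{r}$ rather than decaying.

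The main obstacle is precisely the uniformity of these estimates \emph{up to} the spectral edge $\lambda = \nu_{\min}$. There the two central multipliers collide at $0$ (their difference being $2\sqrt{\nu_1^{\pm} - \lambda}$ on the respective side) and $\mathbb{A}_{\pm}(\nu_{\min})$ acquires a nontrivial nilpotent block, so one must never split \emph{within} that clump: the slow eigenvalues are kept together as an indivisible invariant subspace on which $e^{(x-t)(\mathbb{A}_+ - \mu_j^+ I)}$ is bounded only by $C(1 + |x-t|)$ uniformly in $\lambda$, and the resulting extra factor $|x-t|$ must be absorbed using the \emph{full} weight $(1+t)$ in (A1); meanwhile the separating gap $\delta$ driving the backward integral stays bounded below, so no further degeneracy arises. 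Outside a neighborhood of $\nu_{\min}$ the multipliers are hyperbolic and bounded away from $0$, and the construction is routine.
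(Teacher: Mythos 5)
Your proposal is correct and follows essentially the same route as the paper: conjugate away the leading exponential, split via spectral projections of $\mathbb{A}_{\pm}(\lambda)$ with the non-gapped clump kept together, solve the resulting integral equation by contraction on a half-line using the $(1+|x|)$-weighted integrability from (A1), and read off the $\mathbf{O}((1+|x|)^{-1})$ rate from the tail estimate. The only differences are cosmetic (you work at $+\infty$ where the paper works at $-\infty$, and you use a weighted sup-norm rather than deriving the rate a posteriori), and your explicit handling of the Jordan block of $\mathbb{A}_{\pm}(\nu_{\min})$ at the spectral edge is in fact more careful than the paper's brief remark that "the calculation remains valid" there.
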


\begin{proof}
Focusing on solutions that decay as $x \to - \infty$, we express 
(\ref{first_order}) as 
\begin{equation} \label{first_order_minus}
\frac{d \mathbf{p}}{dx} = \mathbb{A}_- (\lambda) \mathbf{p}
+ \mathcal{R}_- (x) \mathbf{p}; \quad
\mathcal{R}_- (x) = \mathbb{A} (x; \lambda) - \mathbb{A}_- (\lambda) 
=
\begin{pmatrix}
0 & 0 \\
V(x) - V_- & 0
\end{pmatrix}.
\end{equation}
We have seen that asymptotically decaying solutions to the 
asymptotic equation 
$\frac{d\mathbf{p}}{dx} = \mathbb{A}_- (\lambda) \mathbf{p}$
have the form $\mathbf{p}_{n+j}^- (x; \lambda) = e^{\mu_{n+j}^- (\lambda)} \scripty{r}_{\,n+j}^{\,-}$,
and so it's natural to look for solutions of the form 
\begin{equation*}
\mathbf{p}_{n+j}^- (x; \lambda) = e^{\mu_{n+j}^- (\lambda) x} \mathbf{z}_{n+j}^- (x;\lambda),
\end{equation*}
for which we have 
\begin{equation} \label{scaled_equation}
\frac{d \mathbf{z}_{n+j}^- (x;\lambda)}{dx} = 
(\mathbb{A}_- (\lambda) - \mu_{n+j}^- (\lambda) I) \mathbf{z}_{n+j}^- (x; \lambda)
+ \mathcal{R}_- (x) \mathbf{z}_{n+j}^- (x; \lambda). 
\end{equation}
Let $P_{n+j}^- (\lambda)$ project onto the eigenspace of $\mathbb{A}_- (\lambda)$ 
associated with eigenvalues $\sigma (\mathbb{A}_- (\lambda)) \ni \mu \le \mu_{n+j}^-$,
and let $Q_{n+j}^- (\lambda)$ likewise project onto the eigenspace of $\mathbb{A}_- (\lambda)$
associated with  $\sigma (\mathbb{A}_- (\lambda)) \ni \mu > \mu_{n+j}^-$. 
Notice particularly that there exists some $\eta > 0$ so that $\mu - \mu_{n+j}^- \ge \eta$
for all $\mu$ associated with $Q_{n+j}^- (\lambda)$. 

For some fixed $M>0$, we will look for a solution to (\ref{scaled_equation}) in 
$L^{\infty} (-\infty, -M]$ of the form 
\begin{equation} \label{contraction_map}
\begin{aligned}
\mathbf{z}_{n+j}^- (x; \lambda) &= \scripty{r}_{\,n+j}^{\,-}
+ \int_{-\infty}^x e^{(\mathbb{A}_- (\lambda) - \mu_{n+j}^- (\lambda) I) (x - \xi)}
P_{n+j}^- (\lambda) \mathcal{R}_- (\xi) \mathbf{z}_{n+j}^- (\xi; \lambda) d\xi \\
&-
\int_x^{-M} e^{(\mathbb{A}_- (\lambda) - \mu_{n+j}^- (\lambda) I) (x - \xi)}
Q_{n+j}^- (\lambda) \mathcal{R}_- (\xi) \mathbf{z}_{n+j}^- (\xi; \lambda) d\xi. 
\end{aligned}
\end{equation}
We proceed by contraction mapping, defining $\mathcal{T} \mathbf{z}_{n+j}^- (x; \lambda)$
to be the right-hand side of (\ref{contraction_map}). Let 
$\mathbf{z}_{n+j}^-, \mathbf{w}_{n+j}^- \in L^{\infty} (-\infty, -M]$, so that 
\begin{equation}
\begin{aligned}
|\mathcal{T} \mathbf{z}_{n+j}^- - \mathcal{T} \mathbf{w}_{n+j}^-|
& \le
K \|\mathbf{z}_{n+j}^- - \mathbf{w}_{n+j}^- \|_{L^{\infty} (-\infty, -M]} 
\Big{\{} \int_{-\infty}^{x} |\mathcal{R}_- (\xi)| d\xi 
+ \int_{x}^{-M} e^{\eta (x-\xi)} |\mathcal{R}_- (\xi)| d\xi
\Big{\}} \\
&=: I_1 + I_2, 
\end{aligned}
\end{equation}
for some constant $K > 0$.  

By assumption (A1) we know 
\begin{equation*}
\int_{-\infty}^0 (1+|x|) |\mathcal{R}_- (x)| dx = C < \infty,
\end{equation*}
so that
\begin{equation*}
\int_{-\infty}^{-M} (1+M) |\mathcal{R}_- (\xi)| d\xi 
\le 
\int_{-\infty}^{-M} (1+|\xi|) |\mathcal{R}_- (\xi)| d\xi
\le C, 
\end{equation*}
giving the inequality 
\begin{equation*}
\int_{-\infty}^{-M} |\mathcal{R}_- (\xi)| dx \le \frac{C}{1+M}.
\end{equation*}
Likewise, we can check that 
\begin{equation*}
\int_{x}^{-M} e^{\eta (x-\xi)} |\mathcal{R}_- (\xi)| dx 
\le \frac{C}{1+M}.
\end{equation*}
We see that 
\begin{equation*}
|\mathcal{T} \mathbf{z}_{n+j}^- - \mathcal{T} \mathbf{w}_{n+j}^-| \le 
\frac{2 K C}{1+M} \|\mathbf{z}_{n+j}^- - \mathbf{w}_{n+j}^- \|_{L^{\infty} (-\infty, -M]}, 
\end{equation*}
for all $x \in (-\infty, -M]$ so that 
\begin{equation*}
\|\mathcal{T} \mathbf{z}_{n+j}^- - \mathcal{T} \mathbf{w}_{n+j}^-\|_{L^{\infty} (-\infty, -M]} \le 
\frac{2 K C}{1+M} \|\mathbf{z}_{n+j}^- - \mathbf{w}_{n+j}^- \|_{L^{\infty} (-\infty, -M]}, 
\end{equation*}
and for $M$ large enough we have the desired contraction. Moreover, the exponential 
decay in $I_2$ allows us to see that 
\begin{equation*}
\lim_{x \to -\infty} \mathbf{z}_{n+j}^- (x; \lambda) = \scripty{r}_{\,n+j}^{\,-},
\end{equation*}
with the asymptotic rate indicated.

For continuity down to $\lambda = \mu_{\min}$, we notice that in this case some of the 
$\mu_{n+j}^-$ may be 0, so $\mathbf{p}_{n+j}^-$ will not decay as $x \to -\infty$. 
Nonetheless, our calculation remains valid, and in this case there is simply no
exponential scaling. 

Finally, we note that the case $x \to +\infty$ is similar. 
\end{proof}

Recall that we denote by $\mathbf{X}^- (x; \lambda)$ the $2n \times n$ 
matrix obtained by taking each $\mathbf{p}_{n+j}^- (x; \lambda)$ from 
Lemma \ref{ODElemma} as a column. In order to check that 
$\mathbf{X}^- (x; \lambda)$ is the frame for a Lagrangian subspace, let 
$\phi, \psi \in \{\mathbf{p}_{n+j}^- (x; \lambda)\}_{j=1}^n$, and consider 
$\omega (\phi, \psi) = (J \phi, \psi)$. First, 
\begin{equation*}
\frac{d}{dx} \omega (\phi, \psi) = 
(J \frac{d\phi}{dx}, \psi) + (J\phi, \frac{d\psi}{dx})
= (J \mathbb{A} \phi, \psi) + (J\phi, \mathbb{A} \psi).
\end{equation*}
It's important to note at this point that we can express $\mathbb{A}$
as $\mathbb{A} = J \mathbb{B}$, for the symmetric matrix 
\begin{equation*}
\mathbb{B} (x; \lambda) = 
\begin{pmatrix}
V(x) - \lambda I & 0 \\
0 & -\lambda I
\end{pmatrix}.
\end{equation*}
Consequently 
\begin{equation*}
\begin{aligned}
\frac{d}{dx} \omega (\phi, \psi) &=  
(J^2 \mathbb{B} \phi, \psi) + (J\phi, J\mathbb{B} \psi) 
= 
- (\mathbb{B} \phi, \psi) - (J^2 \phi, \mathbb{B} \psi) \\ 
&=
- (\mathbb{B} \phi, \psi) + (\phi, \mathbb{B} \psi) = 0,
\end{aligned}
\end{equation*}
where the final equality follows from the symmetry of $\mathbb{B}$.
We conclude that $\omega (\phi, \psi)$ is constant in $x$, but since 
\begin{equation*}
\lim_{x \to -\infty} \omega (\phi, \psi) = 0,
\end{equation*}
this constant must be 0. 

In order to see that this limit holds even if neither 
$\phi$ nor $\psi$ decays as $x \to -\infty$ (possible if 
$\lambda = \nu_{\min}$), we note that in this case we have 
\begin{equation*}
\begin{aligned}
\phi (x;\lambda) &= \scripty{r}_{\,n+i}^{\,-} + \mathbf{E}_{n+i}^- \\
\psi (x;\lambda) &= \scripty{r}_{\,n+j}^{\,-} + \mathbf{E}_{n+j}^-,
\end{aligned}
\end{equation*}
for some $i \ne j$ and with $\mu_{n+i}$ and $\mu_{n+j}$ both 0. Then
\begin{equation*}
\lim_{x \to -\infty} \omega (\phi, \psi) = \omega (\scripty{r}_{\,n+i}^{\,-}, \scripty{r}_{\,n+j}^{\,-})
= (J {r_i^- \choose 0}, {r_j^- \choose 0}) = 0.
\end{equation*}

Proceeding in the same way, we can verify that $\mathbf{X}^+ (x; \lambda)$ 
is also a frame for a Lagrangian subspace. 

We conclude this section by verifying that $\mathbf{R}^- (\lambda)$ 
(specified in the introduction) is the frame
for a Lagrangian subspace. To see this, we change notation a bit
from the previous calculation and take 
${\phi \choose \mu \phi}, {\psi \choose \nu \psi}
\in \{\scripty{r}_{\,n+j}^{\,-}\}_{j=1}^n$. We compute 
\begin{equation*}
\omega ({\phi \choose \mu \phi}, {\psi \choose \nu \psi})
= (J {\phi \choose \mu \phi}, {\psi \choose \nu \psi})
= (\nu - \mu) (\phi, \psi) = 0, 
\end{equation*}
where the final equality follows from orthogonality of 
the eigenvectors of $V_-$. Likewise, we find that 
$\mathbf{R}^+ (\lambda)$ is a Lagrangian subspace.

\section{The Maslov Index} \label{maslov_section}

Given any two Lagrangian subspaces $\ell_1$ and $\ell_2$, with associated 
frames $\mathbf{X}_1 = {X_1 \choose Y_1}$ and 
$\mathbf{X}_2 = {X_2 \choose Y_2}$, we can define the complex $n \times n$
matrix 
\begin{equation} \label{tildeW}
\tilde{W} = - (X_1 + i Y_1) (X_1 - i Y_1)^{-1} (X_2 - i Y_2) (X_2 + i Y_2)^{-1}.
\end{equation}  
As verified in \cite{HLS}, the matrices $(X_1 - iY_1)$ and $(X_2 + iY_2)$ are both 
invertible, and $\tilde{W}$ is unitary. We have the following theorem 
from \cite{HLS}. 

\begin{theorem} \label{intersection_theorem}
Suppose $\ell_1, \ell_2 \subset \mathbb{R}^{2n}$ are Lagrangian
subspaces, with respective frames $\mathbf{X}_1 = {X_1 \choose Y_1}$ and 
$\mathbf{X}_2 = {X_2 \choose Y_2}$, and let $\tilde{W}$ be as defined
in (\ref{tildeW}). Then 
\begin{equation*}
\dim \ker (\tilde{W} + I) = \dim (\ell_1 \cap \ell_2).
\end{equation*}   
That is, the dimension of the eigenspace of $\tilde{W}$ associated with 
the eigenvalue $-1$ is precisely the dimension of the intersection of 
the Lagrangian subspaces $\ell_1$ and $\ell_2$.
\end{theorem}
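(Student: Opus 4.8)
The plan is to build an explicit complex-linear isomorphism between $\ker(\tilde W+I)$ and the complexification of $\ell_1\cap\ell_2$, and then convert the resulting complex-dimension identity into the real-dimension statement of the theorem.

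First I would settle the underlying algebra. The four matrices $X_j+iY_j$ and $X_j-iY_j$, $j=1,2$, are all invertible: two of them are supplied by \cite{HLS}, and since the frames $\mathbf{X}_j={X_j\choose Y_j}$ have real entries the remaining two follow by taking complex conjugates. In particular $\tilde W$ in (\ref{tildeW}) is well defined, and the relation $\tilde W v=-v$ is equivalent to $(X_1+iY_1)(X_1-iY_1)^{-1}(X_2-iY_2)(X_2+iY_2)^{-1}v=v$. Next, for a real subspace $V\subseteq\mathbb{R}^N$ write $V_{\mathbb C}:=V\oplus iV\subseteq\mathbb{C}^N$ for its complexification, so that $\dim_{\mathbb C}V_{\mathbb C}=\dim_{\mathbb R}V$; a one-line argument splitting a complex vector into real and imaginary parts shows $(\ell_1\cap\ell_2)_{\mathbb C}=(\ell_1)_{\mathbb C}\cap(\ell_2)_{\mathbb C}$. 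Hence it suffices to prove $\dim_{\mathbb C}\ker(\tilde W+I)=\dim_{\mathbb C}\big((\ell_1)_{\mathbb C}\cap(\ell_2)_{\mathbb C}\big)$.

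The heart of the argument is the map $\Phi\colon(\ell_1)_{\mathbb C}\cap(\ell_2)_{\mathbb C}\to\mathbb{C}^n$ given by $\Phi(w)=(X_2+iY_2)a_2$, where $a_2\in\mathbb{C}^n$ is the unique vector with $w=\mathbf{X}_2a_2$ (uniqueness because $\mathbf{X}_2$ has full column rank $n$). This $\Phi$ is complex-linear and injective, since $\Phi(w)=0$ forces $a_2=0$ and hence $w=0$. To see that $\Phi(w)\in\ker(\tilde W+I)$, write also $w=\mathbf{X}_1a_1$, set $v=\Phi(w)$, and push $v$ through the composition defining $-\tilde W$: one gets $(X_2+iY_2)^{-1}v=a_2$, then $(X_2-iY_2)a_2=X_1a_1-iY_1a_1=(X_1-iY_1)a_1$ (using $X_2a_2=X_1a_1$ and $Y_2a_2=Y_1a_1$, which encode $\mathbf{X}_1a_1=\mathbf{X}_2a_2$), and then applying $(X_1-iY_1)^{-1}$ followed by $(X_1+iY_1)$ returns $X_1a_1+iY_1a_1=(X_2+iY_2)a_2=v$; thus $-\tilde W v=v$. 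Conversely, given $v\in\ker(\tilde W+I)$, put $a_2=(X_2+iY_2)^{-1}v$ and $a_1=(X_1-iY_1)^{-1}(X_2-iY_2)a_2$; unpacking $-\tilde W v=v$ gives $(X_1+iY_1)a_1=v=(X_2+iY_2)a_2$, while by construction $(X_1-iY_1)a_1=(X_2-iY_2)a_2$. Adding and subtracting these two identities yields $X_1a_1=X_2a_2$ and $Y_1a_1=Y_2a_2$, so $w:=\mathbf{X}_1a_1=\mathbf{X}_2a_2$ lies in $(\ell_1)_{\mathbb C}\cap(\ell_2)_{\mathbb C}$ and $\Phi(w)=v$. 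Therefore $\Phi$ is an isomorphism onto $\ker(\tilde W+I)$, which combined with the reductions above proves the theorem.

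All the manipulations here are routine block-matrix bookkeeping; the one genuinely conceptual step — the one I would be most careful to state correctly — is the translation between the complex object $\ker(\tilde W+I)$ and the real subspace $\ell_1\cap\ell_2$, carried out by complexifying the $\ell_j$ and invoking both $\dim_{\mathbb C}V_{\mathbb C}=\dim_{\mathbb R}V$ and the compatibility of complexification with intersections. It is worth noting that the argument uses only the invertibility of the four matrices $X_j\pm iY_j$ (which is where the Lagrangian hypothesis enters, via \cite{HLS}) and not the unitarity of $\tilde W$.
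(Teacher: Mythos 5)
Your proof is correct. The paper does not prove Theorem \ref{intersection_theorem} itself but imports it from \cite{HLS}; your argument --- unwinding the factors of $\tilde{W}$ to set up the explicit correspondence $v \leftrightarrow \mathbf{X}_1 a_1 = \mathbf{X}_2 a_2$, with the complexification step to reconcile the complex dimension of $\ker(\tilde{W}+I)$ with the real dimension of $\ell_1\cap\ell_2$ --- is essentially the standard proof given there, and every step (invertibility of all four matrices $X_j\pm iY_j$ by conjugation, injectivity and surjectivity of $\Phi$) checks out.
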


Following \cite{BF98, F}, we use Theorem \ref{intersection_theorem},
along with an approach to spectral flow introduced in \cite{P96},
to define the Maslov index. Given a parameter interval $I = [a,b]$, 
which can be normalized to $[0,1]$, we consider maps 
$\ell:I \to \Lambda (n)$, which will be 
expressed as $\ell (t)$. In order to specify a notion of continuity, 
we need to define a metric on $\Lambda (n)$, and following 
\cite{F} (p. 274), we do this in terms of orthogonal projections 
onto elements $\ell \in \Lambda (n)$. Precisely, let $\mathcal{P}_i$ 
denote the orthogonal projection matrix onto $\ell_i \in \Lambda (n)$
for $i = 1,2$. I.e., if $\mathbf{X}_i$ denotes a frame for $\ell_i$,
then $\mathcal{P}_i = \mathbf{X}_i (\mathbf{X}_i^t \mathbf{X}_i)^{-1} \mathbf{X}_i^t$.
We take our metric $d$ on $\Lambda (n)$ to be defined 
by 
\begin{equation*}
d (\ell_1, \ell_2) := \|\mathcal{P}_1 - \mathcal{P}_2 \|,
\end{equation*} 
where $\| \cdot \|$ can denote any matrix norm. We will say 
that $\ell: I \to \Lambda (n)$ is continuous provided it is 
continuous under the metric $d$. 

Given two continuous maps $\ell_1 (t), \ell_2 (t)$ on a parameter
interval $I$, we denote by $\mathcal{L}(t)$ the path 
\begin{equation*}
\mathcal{L} (t) = (\ell_1 (t), \ell_2 (t)).
\end{equation*} 
In what follows, we will define the Maslov index for the path 
$\mathcal{L} (t)$, which will be a count, including both multiplicity
and direction, of the number of times the Lagrangian paths
$\ell_1$ and $\ell_2$ intersect. In order to be clear about 
what we mean by multiplicty and direction, we observe that 
associated with any path $\mathcal{L} (t)$ we will have 
a path of unitary complex matrices as described in (\ref{tildeW}).
We have already noted that the Lagrangian subspaces $\ell_1$
and $\ell_2$ intersect at a value $t_0 \in I$ if and only 
if $\tilde{W} (t_0)$ has -1 as an eigenvalue. In the event of 
such an intersection, we define the multiplicity of the 
intersection to be the multiplicity of -1 as an eigenvalue of 
$\tilde{W}$ (since $\tilde{W}$ is unitary the algebraic and geometric
multiplicites are the same). When we talk about the direction 
of an intersection, we mean the direction the eigenvalues of 
$\tilde{W}$ are moving (as $t$ varies) along the unit circle 
$S^1$ when they cross $-1$ (we take counterclockwise as the positive direction). We note
that all of the eigenvalues certainly do not all need to be moving in 
the same direction, and that we will need to take care with 
what we mean by a crossing in the following sense: we must decide
whether to increment the Maslov index upon arrival or 
upon departure. Indeed, there are several different approaches 
to defining the Maslov index (see, for example, \cite{CLM, rs93}), 
and they often disagree on this convention. 

Following \cite{BF98, F, P96} (and in particular Definition 1.4 
from \cite{BF98}), we proceed by choosing a 
partition $a = t_0 < t_1 < \dots < t_n=b$ of $I = [a,b]$, along 
with numbers $\epsilon_j \in (0,\pi)$ so that 
$\ker\big(\tilde{W} (t) - e^{i (\pi + \epsilon_j)} I\big)=\{0\}$
for $t_{j-1} \le t \le t_j$; 
that is, $e^{i(\pi + \epsilon_j)} \in \bbC \setminus \sigma(\tilde{W} (t))$, 
for $t_{j-1} \le t \le t_j$ and $j=1,\dots,n$. 
Moreover, we notice that for each $j=1,\dots,n$ and any 
$t \in [t_{j-1},t_j]$ there are only 
finitely many values $\theta \in [0,\epsilon_j)$ 
for which $e^{i(\pi+\theta)} \in \sigma(\tilde{W} (t))$.

Fix some $j \in \{1, 2, \dots, n\}$ and consider the value
\begin{equation} \label{kdefined}
k (t,\epsilon_j) := 
\sum_{0 \leq \theta < \epsilon_j}
\dim \ker \big(\tilde{W} (t) - e^{i(\pi+\theta)}I \big).
\end{equation} 
for $t_{j-1} \leq t \leq t_j$. This is precisely the sum, along with multiplicity,
of the number of eigenvalues of $\tilde{W} (t)$ that lie on the arc 
\begin{equation*}
A_j := \{e^{i t}: t \in [\pi, \pi+\epsilon_j)\}.
\end{equation*}
(See Figure \ref{Aj}.)
The stipulation that 
$e^{i(\pi\pm\epsilon_j)} \in \bbC\setminus \sigma(\tilde{W} (t))$, for 
$t_{j-1} \le t \le t_j$
asserts that no eigenvalue can enter $A_j$ in the clockwise direction 
or exit in the counterclockwise direction during the interval $t_{j-1} \le t \le t_j$.
In this way, we see that $k(t_j, \epsilon_j) - k (t_{j-1}, \epsilon_j)$ is a 
count of the number of eigenvalues that enter $A_j$ in the counterclockwise 
direction (i.e., through $-1$) minus the number that leave in the clockwise direction
(again, through $-1$) during the interval $[t_{j-1}, t_j]$.

\begin{figure}[ht] 
\begin{center}\includegraphics[%
  width=8cm,
  height=8cm]{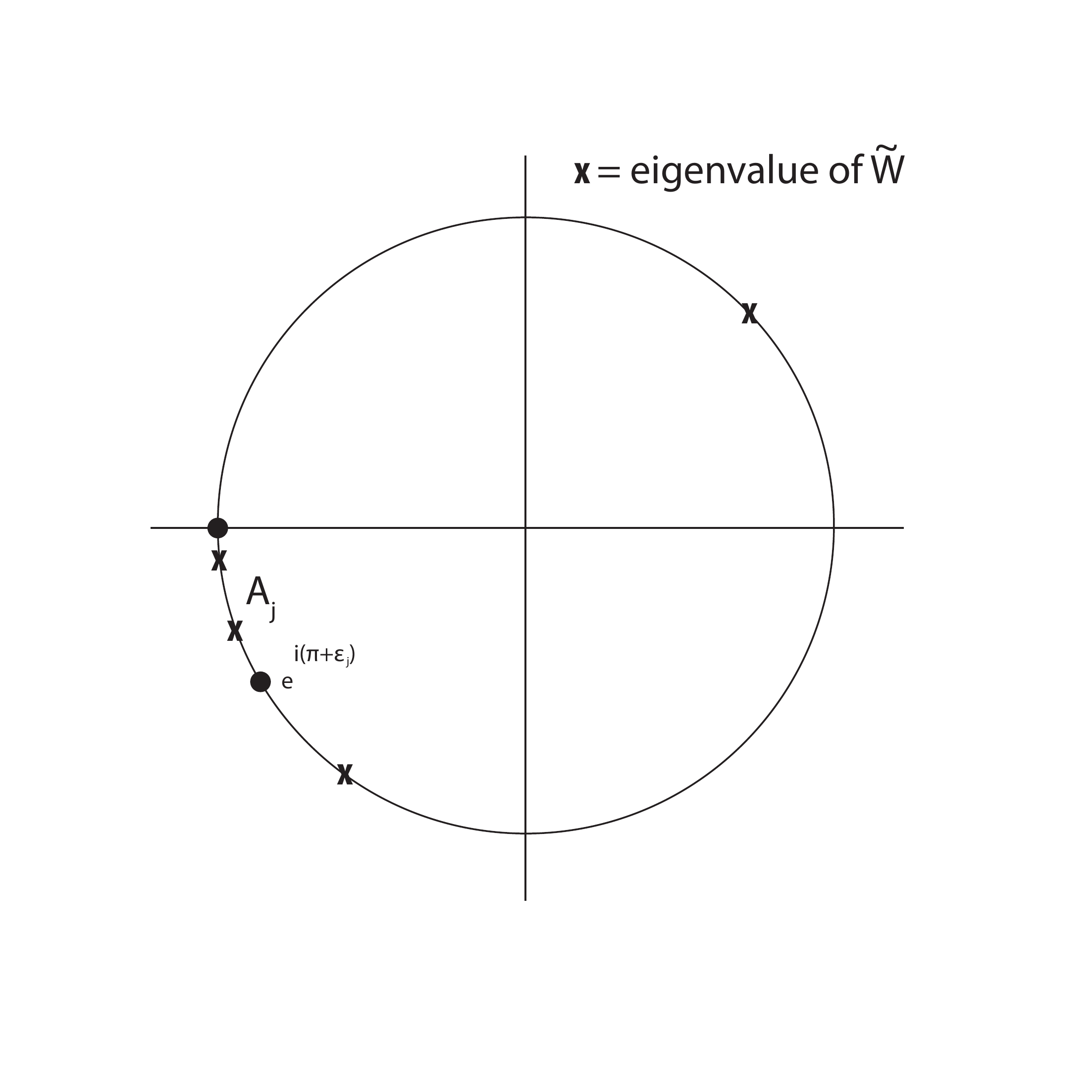}\end{center}
\caption{The arc $A_j$. \label{Aj}}
\end{figure}

In dealing with the catenation of paths, it's particularly important to 
understand this quantity if an eigenvalue resides at $-1$ at either $t = t_{j-1}$
or $t = t_j$ (i.e., if an eigenvalue begins or ends at a crosssing). If an eigenvalue 
moving in the counterclockwise direction 
arrives at $-1$ at $t = t_j$, then we increment the difference forward, while if 
the eigenvalue arrives at -1 from the clockwise direction we do not (because it
was already in $A_j$ prior to arrival). On
the other hand, suppose an eigenvalue resides at -1 at $t = t_{j-1}$ and moves
in the counterclockwise direction. The eigenvalue remains in $A_j$, and so we do not increment
the difference. However, if the eigenvalue leaves in the clockwise direction 
then we decrement the difference. In summary, the difference increments forward upon arrivals 
in the counterclockwise direction, but not upon arrivals in the clockwise direction,
and it decrements upon departures in the clockwise direction, but not upon 
departures in the counterclockwise direction.      

We are now ready to define the Maslov index.

\begin{definition} \label{dfnDef3.6}  
Let $\mathcal{L} (t) = (\ell_1 (t), \ell_2 (t))$, where $\ell_1, \ell_2:I \to \Lambda (n)$ 
are continuous paths in the Lagrangian--Grassmannian. 
The Maslov index $\mas(\mathcal{L};I)$ is defined by
\begin{equation}
\mas(\mathcal{L};I)=\sum_{j=1}^n(k(t_j,\epsilon_j)-k(t_{j-1},\epsilon_j)).
\end{equation}
\end{definition}

\begin{remark} As discussed in \cite{BF98}, the Maslov index does not depend
on the choices of $\{t_j\}_{j=0}^n$ and $\{\epsilon_j\}_{j=1}^n$, so long as 
they follow the specifications above. 
\end{remark}

One of the most important features of the Maslov index is homotopy invariance, 
for which we need to consider continuously varying families of Lagrangian 
paths. To set some notation, we denote by $\mathcal{P} (I)$ the collection 
of all paths $\mathcal{L} (t) = (\ell_1 (t), \ell_2 (t))$, where 
$\ell_1, \ell_2:I \to \Lambda (n)$ are continuous paths in the 
Lagrangian--Grassmannian. We say that two paths 
$\mathcal{L}, \mathcal{M} \in \mathcal{P} (I)$ are homotopic provided 
there exists a family $\mathcal{H}_s$ so that 
$\mathcal{H}_0 = \mathcal{L}$, $\mathcal{H}_1 = \mathcal{M}$, 
and $\mathcal{H}_s (t)$ is continuous as a map from $(t,s) \in I \times [0,1]$
into $\Lambda (n)$. 
 
The Maslov index has the following properties (see, for example, \cite{HLS} in 
the current setting, or Theorem 3.6 in \cite{F} for a more general result). 

\medskip
\noindent
{\bf (P1)} (Path Additivity) If $a < b < c$ then 
\begin{equation*}
\mas (\mathcal{L};[a, c]) = \mas (\mathcal{L};[a, b]) + \mas (\mathcal{L}; [b, c]).
\end{equation*}

\medskip
\noindent
{\bf (P2)} (Homotopy Invariance) If $\mathcal{L}, \mathcal{M} \in \mathcal{P} (I)$ 
are homotopic, with $\mathcal{L} (a) = \mathcal{M} (a)$ and  
$\mathcal{L} (b) = \mathcal{M} (b)$ (i.e., if $\mathcal{L}, \mathcal{M}$
are homotopic with fixed endpoints) then 
\begin{equation*}
\mas (\mathcal{L};[a, b]) = \mas (\mathcal{M};[a, b]).
\end{equation*}

\section{Application to Schr\"odinger Operators} \label{schrodinger_section}

For $H$ in (\ref{main}), a value $\lambda \in \mathbb{R}$ 
is an eigenvalue (see Definition \ref{spectrum}) if and only if there exist coefficient 
vectors $\alpha (\lambda), \beta (\lambda) \in \mathbb{R}^n$ 
and an eigenfunction $\phi (x; \lambda)$ so that 
$\mathbf{p} = {\phi \choose \phi'}$ satisfies
\begin{equation*}
\mathbf{X}^- (x; \lambda) \alpha (\lambda)
= \mathbf{p} (x; \lambda) = \mathbf{X}^+ (x; \lambda) \beta (\lambda).
\end{equation*}
This clearly holds if and only if the Lagrangian subspaces 
$\ell^- (x; \lambda)$ and $\ell^+ (x; \lambda)$ have 
non-trivial intersection. Moreover, the dimension of intersection 
will correspond with the geometric multiplicity of $\lambda$ 
as an eigenvalue. In this way, we can fix any $x \in \mathbb{R}$ and compute 
the number of negative eigenvalues of $H$, including multiplicities,
by counting the intersections of $\ell^- (x; \lambda)$ and 
$\ell^+ (x; \lambda)$, including multiplicities. Our approach 
will be to choose $x = x_{\infty}$ for a sufficiently large 
value $x_{\infty} > 0$. Our tool for counting the number and 
multiplicity of intersections will be the Maslov index, and 
our two Lagrangian subspaces (in the roles
of $\ell_1$ and $\ell_2$ above) will be $\ell^- (x; \lambda)$
and $\ell^+_{\infty} (\lambda) := \ell^+ (x_{\infty}; \lambda)$. 
We will denote the Lagrangian frame associated
with $\ell^+_{\infty}$ by 
\begin{equation*}
\mathbf{X}_{\infty}^+ (\lambda) 
= {X^+_{\infty} (\lambda) \choose Y^+_{\infty} (\lambda)}.
\end{equation*}

\begin{remark} \label{discontinuous}
We will verify in the appendix that while the limit
\begin{equation*}
\ell^-_{+\infty} (\lambda) := \lim_{x \to +\infty} \ell^- (x; \lambda)
\end{equation*}
is well defined for each $\lambda \le 0$, the resulting
limit is not necessarily continuous as a function of $\lambda$. This 
is our primary motivation for working with $x_{\infty}$ rather than 
with the asymptotic limit.
\end{remark}

Our analysis will be based on computing the Maslov index along a 
closed path in the $x$-$\lambda$ plane, determined by sufficiently large 
values $x_{\infty}, \lambda_{\infty} > 0$. First, if we fix $\lambda = 0$ and 
let $x$ run from $-\infty$ to $x_{\infty}$, we denote the 
resulting path $\Gamma_0$ (the {\it right shelf}). Next, 
we fix $x = x_{\infty}$ and let $\Gamma_+$ denote a path in 
which $\lambda$ decreases from $0$ to $-\lambda_{\infty}$.  
Continuing counterclockwise along our path, we denote by 
$\Gamma_{\infty}$ the path obtained by fixing 
$\lambda = -\lambda_{\infty}$ and letting $x$ run from $x_{\infty}$
to $-\infty$ (the {\it left shelf}). Finally, we close the path 
in an asymptotic sense by taking a final path, $\Gamma_-$, 
with $\lambda$ running from $-\lambda_{\infty}$ to $0$ 
(viewed as the asymptotic limit as $x \to - \infty$; we refer 
to this as the {\it bottom shelf}). See Figure \ref{box_figure}.

\begin{figure}[ht] 
\begin{center}\includegraphics[%
  width=12cm,
  height=8cm]{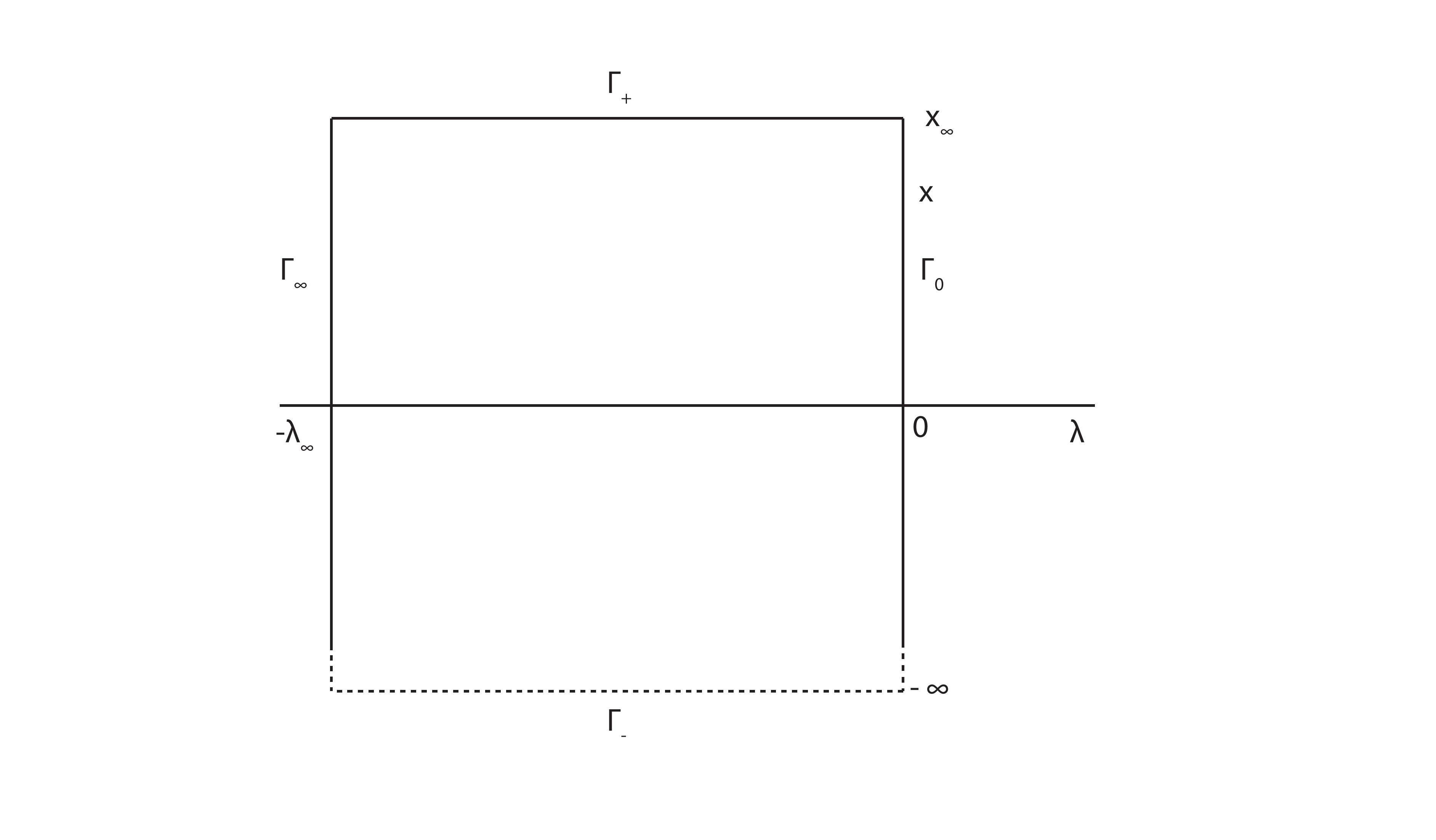}\end{center}
\caption{Maslov Box. \label{box_figure}}
\end{figure}

We recall that we can take the vectors in our frame $\mathbf{X}^- (x; \lambda)$
to be 
\begin{equation*}
\scripty{r}_{\,n+j}^{\,-} + \mathbf{E}_{n+j}^-,
\end{equation*}
from which we see that $\ell^- (x;\lambda)$ approaches the asymptotic frame
$\mathbf{R}^- (\lambda)$ as $x \to -\infty$. Introducing the change of 
variables 
\begin{equation*}
x = \ln (\frac{1+\tau}{1-\tau}) \iff \tau = \frac{e^x - 1}{e^x +1},
\end{equation*} 
we see that $\ell^-$ can be viewed as a continuous map on the compact 
domain 
\begin{equation*}
[-1, \frac{e^{x_\infty} - 1}{e^{x_\infty}+1}] \times [-\lambda_{\infty}, 0].
\end{equation*}

In the setting of (\ref{main}), our evolving Lagrangian subspaces have
frames $\mathbf{X}^- (x; \lambda)$ and $\mathbf{X}^+_{\infty} (\lambda)$,
so that $\tilde{W}$ from (\ref{tildeW}) becomes 
\begin{equation} \label{tildeWmain}
\begin{aligned}
\tilde{W} (x; \lambda) 
& = - (X^- (x; \lambda) + i Y^- (x; \lambda)) (X^- (x; \lambda) - i Y^- (x; \lambda))^{-1} \\
&\quad \times (X^+_{\infty}(\lambda) - i Y^+_{\infty} (\lambda)) (X^+_{\infty} (\lambda) + i Y^+_{\infty} (\lambda))^{-1}. 
\end{aligned}
\end{equation} 
Since $\tilde{W} (x; \lambda)$ is unitary, its eigenvalues are confined to 
the until circle in $\mathbb{C}$, $S^1$. In the limit as $x \to -\infty$ we 
obtain 
\begin{equation} \label{tildeWmainInf}
\begin{aligned}
\tilde{W}^- (\lambda) := \lim_{x \to -\infty} \tilde{W} (x; \lambda)
& = - (R^- + i S^- (\lambda)) (R^- - i S^- (\lambda))^{-1} \\
& \quad \times (X^+_{\infty}(\lambda) - i Y^+_{\infty} (\lambda)) (X^+_{\infty} (\lambda) + i Y^+_{\infty} (\lambda))^{-1}. 
\end{aligned}
\end{equation}

\subsection{Monotonicity} \label{monotonicity_section}

Our first result for this section 
asserts that the eigenvalues of $\tilde{W} (x;\lambda)$ and $\tilde{W}^- (\lambda)$
rotate monotonically as $\lambda$ varies 
along $\mathbb{R}$. In order to prove this, we will use a lemma from 
\cite{HS}, which we state as follows (see also Theorem V.6.1 in \cite{At}).

\begin{lemma} [\cite{HS}, Lemma 3.11.] 
Let $\tilde{W} (\tau)$ be a $C^1$ family of unitary $n \times n$ matrices on 
some interval $I$, satisfying a differential equation 
$\frac{d}{d\tau} \tilde{W} (\tau) = i \tilde{W} (\tau) \tilde{\Omega} (\tau)$, 
where $\tilde{\Omega} (\tau)$ is a continuous, self-adjoint and negative-definite 
$n \times n$ matrix. 
Then the eigenvalues of $\tilde{W} (\tau)$ move (strictly) monotonically clockwise on the 
unit circle as $\tau$ increases.  
\label{HS_monotonicity} 
\end{lemma}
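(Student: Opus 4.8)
The plan is to prove the lemma by tracking a single eigenvalue of $\tilde{W}(\tau)$ together with a corresponding unit eigenvector and computing directly the rate at which its argument changes. Writing an eigenvalue as $\mu(\tau) = e^{i\theta(\tau)}$, clockwise motion on $S^1$ means precisely that $\theta$ is strictly decreasing, so the entire content of the lemma reduces to showing $\dot\theta(\tau) < 0$. At parameter values where the eigenvalue is simple, $\mu(\tau)$ and a normalized eigenvector $v(\tau)$ can be chosen to depend in a $C^1$ fashion on $\tau$ (standard first-order perturbation theory for a simple, isolated eigenvalue of a $C^1$ matrix family), so the differentiation below is rigorous there; the only genuine subtlety, taken up at the end, is the behavior at parameter values where eigenvalues collide.

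For the core computation I would begin from $\tilde{W}(\tau) v(\tau) = \mu(\tau) v(\tau)$ with $\|v\| = 1$, differentiate in $\tau$, and substitute the structural equation $\dot{\tilde{W}} = i\tilde{W}\tilde{\Omega}$ to obtain
\begin{equation*}
i\,\tilde{W}\tilde{\Omega} v + \tilde{W}\dot v = \dot\mu\, v + \mu\,\dot v.
\end{equation*}
Taking the Hermitian inner product with $v$ (conjugate-linear in the second slot, with $\langle v, v\rangle = 1$) and using unitarity in the form $\tilde{W}^* v = \bar\mu\, v$, the two terms containing $\dot v$ cancel, since $\langle \tilde{W}\dot v, v\rangle = \langle \dot v, \tilde{W}^* v\rangle = \mu\langle \dot v, v\rangle$. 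Applying the same identity to the surviving term gives $\langle \tilde{W}\tilde{\Omega} v, v\rangle = \mu\langle \tilde{\Omega} v, v\rangle$, so that $\dot\mu = i\mu\langle \tilde{\Omega} v, v\rangle$. Comparing this with $\dot\mu = i\dot\theta\,\mu$ yields the clean formula $\dot\theta = \langle \tilde{\Omega} v, v\rangle$. Because $\tilde{\Omega}(\tau)$ is self-adjoint and negative-definite and $\|v\| = 1$, the right-hand side is strictly negative, whence $\dot\theta < 0$ and the eigenvalue rotates strictly clockwise.

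I expect the main obstacle to be a clean treatment of parameter values $\tau_0$ at which an eigenvalue $e^{i\theta_0}$ has multiplicity $m > 1$, where the individual eigenvalue branches of a merely $C^1$ family need not themselves be $C^1$. The plan here is to pass to the Riesz spectral projection $P(\tau)$ onto the cluster of eigenvalues near $e^{i\theta_0}$, which is $C^1$, and to use Kato's reduction to replace $\tilde{W}$ on the range of $P(\tau_0)$ by a $C^1$ family of $m \times m$ unitary matrices equal to $e^{i\theta_0} I_m$ at $\tau_0$. First-order perturbation theory then identifies the $m$ one-sided angular velocities of the emerging eigenvalues with the eigenvalues of the compression $P_0\,\tilde{\Omega}(\tau_0)\,P_0$ of $\tilde{\Omega}(\tau_0)$ to the eigenspace. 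Since $\tilde{\Omega}(\tau_0)$ is negative-definite, every such compression is negative-definite, so all $m$ branches leave $e^{i\theta_0}$ strictly clockwise; combined with the simple-eigenvalue computation this gives strict clockwise monotonicity throughout $I$. An alternative, if one prefers to avoid reduction theory, is to apply the displayed formula to the average argument of the determinant of the compression of $\tilde{W}(\tau)$ to the range of $P(\tau)$, supplemented by a crossing-form argument to exclude branches that stall or reverse; either route turns on the same negativity of $\tilde{\Omega}$.
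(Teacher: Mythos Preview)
The paper does not supply its own proof of this lemma: it is quoted verbatim from \cite{HS} (Lemma~3.11 there) and used as a black box in the proof of Lemma~\ref{monotonicity_lemma}. So there is no in-paper argument to compare against.

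That said, your argument is correct and is essentially the standard one (and presumably close to what appears in \cite{HS}; see also \cite[Theorem~V.6.1]{At}, which the paper cites alongside the lemma). The core identity $\dot\theta = \langle \tilde\Omega v, v\rangle$ is derived cleanly, and your handling of eigenvalue collisions via the Riesz projection and Kato reduction is the right way to make the argument airtight for a merely $C^1$ family. One small remark: in the degenerate case you could also avoid reduction entirely by noting that $\det \tilde W(\tau)$ satisfies $\frac{d}{d\tau}\det\tilde W = i(\operatorname{tr}\tilde\Omega)\det\tilde W$, so the \emph{sum} of the arguments is strictly decreasing; combined with your local branch-velocity computation this already rules out any branch stalling or reversing at a crossing. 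Either route is fine.
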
 

We are now prepared to state and prove our monotonicity lemma.

\begin{lemma} \label{monotonicity_lemma}
Let $V \in C(\mathbb{R})$ be a real-valued symmetric matrix, and suppose (A1)
and (A2) hold. Then for each fixed $x \in \mathbb{R}$
the eigenvalues of $\tilde{W} (x; \lambda)$ rotate monotonically clockwise
as $\lambda \in (-\infty, \nu_{\min})$ increases. Moreover, the eigenvalues 
of $\tilde{W}^{-} (\lambda)$ rotate (strictly) monotonically clockwise as 
$\lambda \in (-\infty, \nu_{\min})$ increases. 
\end{lemma}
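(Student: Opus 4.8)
The plan is to reduce the monotonicity statement to Lemma~\ref{HS_monotonicity} by showing that $\tilde W(x;\cdot)$ and $\tilde W^-(\cdot)$ each satisfy a differential equation of the form $\partial_\lambda \tilde W = i\tilde W\,\tilde\Omega$ with $\tilde\Omega$ continuous, self-adjoint, and negative-definite. First I would record the abstract mechanism: if $\ell_1(\lambda)$ has frame $\mathbf X_1=\binom{X_1}{Y_1}$ and $\ell_2$ is a fixed Lagrangian subspace with frame $\mathbf X_2=\binom{X_2}{Y_2}$, then writing $U_1 = (X_1+iY_1)(X_1-iY_1)^{-1}$ and the constant unitary $U_2 = (X_2-iY_2)(X_2+iY_2)^{-1}$, we have $\tilde W = -U_1 U_2$, so $\partial_\lambda \tilde W = -(\partial_\lambda U_1)U_2 = (\partial_\lambda U_1)U_1^{-1}\tilde W$. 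A direct computation (this is the standard ``Cayley transform'' identity, cf. the references to \cite{HLS, HS, F}) shows $(\partial_\lambda U_1)U_1^{-1} = 2i(X_1-iY_1)^{-*}\big(Y_1^t \partial_\lambda X_1 - X_1^t \partial_\lambda Y_1\big)(X_1-iY_1)^{-1}$, which is $i$ times a self-adjoint matrix; hence $\tilde\Omega := (X_1-iY_1)^{-1}\big[-2(Y_1^t\partial_\lambda X_1 - X_1^t\partial_\lambda Y_1)\big](X_1-iY_1)^{-*}\cdot(\text{conjugation by }\tilde W)$ is self-adjoint, and the sign of the quadratic form $\omega_\lambda(\mathbf X_1):=Y_1^t\partial_\lambda X_1 - X_1^t\partial_\lambda Y_1$ is exactly what controls the direction of rotation. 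Here $U_1^{-1}=U_1^*$ since $U_1$ is unitary, so the conjugation is by a unitary matrix and preserves self-adjointness and definiteness.

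Next I would compute $\omega_\lambda$ along the solution frame. For $\tilde W(x;\lambda)$ the frame $\mathbf X^-(x;\lambda)=\binom{X^-}{Y^-}$ is built from solutions $\mathbf p^-_{n+j}$ of the first-order system $\partial_x\mathbf p = \mathbb A(x;\lambda)\mathbf p$ with $\mathbb A = J\mathbb B$, $\mathbb B=\operatorname{diag}(V(x)-\lambda I,-\lambda I)$. The key point is that $\partial_\lambda \mathbb B = -I_{2n}$. Differentiating the defining ODE in $\lambda$ and using the standard crossing-form identity (integration against the symplectic form along $x$, as in \cite{DJ11, CJLS2014}), one gets that for columns $\mathbf u,\mathbf v$ of $\mathbf X^-$,
\begin{equation*}
\partial_x\Big[(\partial_\lambda \mathbf u)^t J \mathbf v - \mathbf u^t J(\partial_\lambda\mathbf v)\,\Big] = -\,\mathbf u^t (\partial_\lambda\mathbb B)^t \mathbf v = \mathbf u^t\mathbf v,
\end{equation*}
wait --- more precisely the right normalization gives $\partial_x\,\omega(\partial_\lambda\mathbf u,\mathbf v) = -(\mathbb B_\lambda \mathbf u,\mathbf v) = (\mathbf u,\mathbf v)$ up to a fixed sign, and since the left-decaying solutions and their $\lambda$-derivatives vanish as $x\to-\infty$ (using the uniform $\mathbf O((1+|x|)^{-1})$ bounds from Lemma~\ref{ODElemma}, differentiated in $\lambda$, which hold because the contraction estimate is uniform in $\lambda$ on compact sets), integration from $-\infty$ to $x$ yields that the matrix $Y^t\partial_\lambda X - X^t\partial_\lambda Y$ equals, up to sign, $\int_{-\infty}^x (X^-)^t(\xi)X^-(\xi)\,d\xi$ plus the analogous $Y$-term --- in any case a matrix of fixed sign (positive definite), because the integrand is a Gram matrix of the linearly independent columns. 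This forces $\tilde\Omega$ to be negative-definite (with the sign conventions fixed so that ``clockwise'' comes out), and continuity in $\lambda$ is immediate from continuous dependence of solutions on parameters. Then Lemma~\ref{HS_monotonicity}, applied with $\tau=\lambda$, gives the clockwise rotation for $\tilde W(x;\lambda)$; note that strictness is not claimed in this first assertion precisely because one column of $X^-$ could in principle fail to contribute at the boundary $\lambda\to\nu_{\min}$, but on the open interval it is in fact strict.

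For $\tilde W^-(\lambda)$ I would repeat the argument with the asymptotic frame $\mathbf R^-(\lambda)=\binom{R^-}{S^-(\lambda)}$, where $R^-$ is the constant matrix of eigenvectors of $V_-$ and $S^-(\lambda)$ has columns $\mu^-_{n+j}(\lambda)r^-_j$ with $\mu^-_{n+j}(\lambda)=\sqrt{\nu^-_j-\lambda}$. Here the computation of $\omega_\lambda(\mathbf R^-) = (R^-)^t\partial_\lambda S^- - (S^-)^t\partial_\lambda R^- = (R^-)^t\partial_\lambda S^-$ is purely algebraic: since $R^-$ has orthonormal columns and $S^-$ scales column $j$ by $\mu^-_{n+j}(\lambda)$, the matrix $(R^-)^t\partial_\lambda S^-$ is the diagonal matrix $\operatorname{diag}\big(\partial_\lambda\mu^-_{n+1}(\lambda),\dots,\partial_\lambda\mu^-_{2n}(\lambda)\big)$, and $\partial_\lambda\mu^-_{n+j}(\lambda) = -\tfrac12(\nu^-_j-\lambda)^{-1/2} < 0$ for $\lambda<\nu_{\min}\le\nu^-_j$. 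This is strictly negative-definite for every $\lambda$ in the open interval, so the resulting $\tilde\Omega$ for $\tilde W^-$ is strictly negative-definite, and Lemma~\ref{HS_monotonicity} gives strict clockwise rotation. The main obstacle is the first half: justifying that the $\lambda$-derivatives of the decaying solutions exist, decay at $x\to-\infty$, and depend continuously on $\lambda$ --- i.e. upgrading Lemma~\ref{ODElemma} to a statement with one $\lambda$-derivative, and then correctly bookkeeping the several sign conventions ($J$, the Cayley transform orientation, the definition of $\tilde W$ with its leading minus sign) so that ``negative-definite'' lines up with ``clockwise''; the asymptotic-frame part is essentially a one-line eigenvalue computation by comparison.
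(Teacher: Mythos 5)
Your overall strategy is the paper's: write $\tilde W=-\tilde W_1\tilde W_2$ via the Cayley transform, derive $\partial_\lambda\tilde W=i\tilde W\tilde\Omega$, identify the definiteness of $\tilde\Omega$ with that of the crossing form ${X}^tY_\lambda-{Y}^tX_\lambda$, compute that form by differentiating in $x$ and integrating from $-\infty$, and invoke Lemma~\ref{HS_monotonicity}; the treatment of the asymptotic frame $\mathbf{R}^-(\lambda)$ as a purely algebraic diagonal computation with entries $-\tfrac12(\nu_j^--\lambda)^{-1/2}$ is also exactly what the paper does, cf.\ (\ref{RSlambda2}).

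There is, however, one concrete gap. You posit that $\ell_2$ is a \emph{fixed} Lagrangian subspace and $U_2$ a constant unitary, so that $\partial_\lambda\tilde W=-(\partial_\lambda U_1)U_2$. But in both assertions of the lemma the second frame is $\mathbf{X}^+_\infty(\lambda)=\mathbf{X}^+(x_\infty;\lambda)$, which depends on $\lambda$ (see (\ref{tildeWmain}) and (\ref{tildeWmainInf})). Hence $\tilde\Omega=\tilde W_2^*\tilde\Omega_1\tilde W_2+\tilde\Omega_2$, and the conclusion requires that $\tilde\Omega_2$ be negative definite as well; were it of the opposite sign, the two rotations could partially cancel and no monotonicity would follow. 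The paper settles this by the same crossing-form identity applied on $[x_\infty,+\infty)$: differentiating ${Y^+}^tX^+_\lambda-{X^+}^tY^+_\lambda$ in $x$ and integrating from $x_\infty$ to $+\infty$, using the decay of the columns of $\mathbf{X}^+$ there, gives $-\int_{x_\infty}^{+\infty}{X^+}^tX^+\,dy$. Note that the bracket enters $\tilde\Omega_2$ in (\ref{tildeomega}) with the \emph{opposite} ordering from the $\ell^-$ term precisely because the integration runs toward $+\infty$ rather than from $-\infty$; this is exactly the sign bookkeeping you repeatedly defer with ``up to sign,'' and it is the one place the argument could silently fail. Your remaining points are sound and match the paper: the $Y$-terms cancel exactly in the $x$-derivative (there is no ``analogous $Y$-term''; the identity is exactly $-\int_{-\infty}^x{X^-}^tX^-\,dy$), the integrand is a Gram matrix of linearly independent columns, and your observation that Lemma~\ref{ODElemma} should be upgraded to control one $\lambda$-derivative of the decaying solutions is a legitimate technical point that the paper itself passes over in silence.
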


\begin{remark} \label{monotonicity_remark}
The monotoncity described in Lemma \ref{monotonicity_lemma} seems to be 
generic for self-adjoint operators in a broad range of settings (see, for 
example, \cite{HS}); monotonicity in $x$ is not generic.    
\end{remark}

\begin{proof} Following \cite{HS}, we begin by computing 
$\frac{\partial \tilde{W}}{\partial \lambda}$, and for this 
calculation it's convenient to write 
$\tilde{W} (x; \lambda) = - \tilde{W}_1 (x; \lambda) \tilde{W}_2 (\lambda)$,
where 
\begin{equation*}
\begin{aligned}
\tilde{W}_1 (x; \lambda) &= 
(X^- (x;\lambda) + iY^- (x;\lambda)) (X^- (x;\lambda) - iY^- (x;\lambda))^{-1} \\
\tilde{W}_2 (\lambda) &=
(X^+_{\infty} (\lambda) - i Y^+_{\infty} (\lambda)) 
(X^+_{\infty} (\lambda) + i Y^+_{\infty} (\lambda))^{-1}.
\end{aligned}
\end{equation*}
For $\tilde{W}_1$, we have (suppressing independent variables for notational brevity) 
\begin{equation*}
\begin{aligned}
\frac{\partial \tilde{W}_1}{\partial \lambda} &=
(X^-_{\lambda} + iY^-_{\lambda}) (X^- - iY^-)^{-1} 
- (X^- + iY^-) (X^- - iY^-)^{-1} (X^-_{\lambda} - iY^-_{\lambda}) (X^- - iY^-)^{-1} \\
&= (X^-_{\lambda} + iY^-_{\lambda}) (X^- - iY^-)^{-1} 
- \tilde{W}_1 (X^-_{\lambda} - iY^-_{\lambda}) (X^- - iY^-)^{-1}.  
\end{aligned}
\end{equation*}
If we multiply by $\tilde{W}_1^*$ we find 
\begin{equation*}
\begin{aligned}
\tilde{W}_1^* \frac{\partial \tilde{W}_1}{\partial \lambda}
&= ({X^-}^t + i {Y^-}^t)^{-1} ({X^-}^t - i{Y^-}^t)  (X^-_{\lambda} + iY^-_{\lambda}) (X^- - iY^-)^{-1} \\
&- (X^-_{\lambda} - iY^-_{\lambda}) (X^- - iY^-)^{-1} \\
&= ({X^-}^t + i {Y^-}^t)^{-1} \Big{\{} ({X^-}^t - i{Y^-}^t)  (X^-_{\lambda} + iY^-_{\lambda}) \\
& \quad \quad - ({X^-}^t + i {Y^-}^t) (X^-_{\lambda} - iY^-_{\lambda})  \Big{\}} (X^- - iY^-)^{-1} \\
&= \Big( ({X^-} - i {Y^-})^{-1} \Big)^* 
\Big{\{} 2i {X^-}^t Y^-_{\lambda} - 2i {Y^-}^t X^-_{\lambda} \Big{\}} (X^- - iY^-)^{-1}.
\end{aligned}
\end{equation*}
Multiplying back through by $\tilde{W}_1$, we conclude 
\begin{equation*}
\frac{\partial \tilde{W}_1}{\partial \lambda} = i \tilde{W}_1 \tilde{\Omega}_1,
\end{equation*}
where 
\begin{equation*}
\tilde{\Omega}_1 = \Big( ({X^-} - i {Y^-})^{-1} \Big)^* 
\Big{\{} 2 {X^-}^t Y^-_{\lambda} - 2 {Y^-}^t X^-_{\lambda} \Big{\}} \Big( (X^- - iY^-)^{-1} \Big).
\end{equation*}

Likewise, we find that 
\begin{equation*}
\frac{\partial \tilde{W}_2}{\partial \lambda} = i \tilde{W}_2 \tilde{\Omega}_2,
\end{equation*}
where 
\begin{equation} \label{tildeomega}
\tilde{\Omega}_2 = \Big( (X^+_{\infty} + i Y^+_{\infty})^{-1} \Big)^* 
\Big{\{} 2 {Y^+_{\infty}}^t \partial_{\lambda} X^+_{\infty} 
- 2 {X^+_{\infty}}^t \partial_{\lambda} Y^+_{\infty} \Big{\}} \Big( (X^+_{\infty} + iY^+_{\infty})^{-1} \Big).
\end{equation}

Combining these observations, we find 
\begin{equation} \label{combined1}
\begin{aligned}
\frac{\partial \tilde{W}}{\partial \lambda} &= - \frac{\partial \tilde{W}_1}{\partial \lambda} \tilde{W}_2
- \tilde{W}_1 \frac{\partial \tilde{W}_2}{\partial \lambda} 
=  - i \tilde{W}_1 \tilde{\Omega}_1 \tilde{W}_2 - i \tilde{W}_1 \tilde{W}_2 \tilde{\Omega}_2 \\
&= - i \tilde{W}_1 \tilde{W}_2 (\tilde{W}_2^* \tilde{\Omega}_1 \tilde{W}_2) - i \tilde{W}_1 \tilde{W}_2 \tilde{\Omega}_2 
= i \tilde{W} \tilde{\Omega},
\end{aligned}
\end{equation}
where (recalling that $\tilde{W} = - \tilde{W}_1 \tilde{W}_2$)
\begin{equation*}
\tilde{\Omega} = \tilde{W}_2^* \tilde{\Omega}_1 \tilde{W}_2 + \tilde{\Omega}_2.
\end{equation*}

We see that the behavior of $\frac{\partial \tilde{W}}{\partial \lambda}$ will be determined by 
the quantities ${X^-}^t Y^-_{\lambda} - {Y^-}^t X^-_{\lambda}$ and 
${Y^+_{\infty}}^t \partial_{\lambda} X^+_{\infty} 
- {X^+_{\infty}}^t \partial_{\lambda} Y^+_{\infty}$. 
For the former, we differentiate with respect to $x$ to find 
\begin{equation*}
\begin{aligned}
\frac{\partial}{\partial x} 
\Big{\{} {X^-}^t Y^-_{\lambda} - {Y^-}^t X^-_{\lambda} \Big{\}}
&= 
{X^-}^t_x Y^-_{\lambda} + {X^-}^t Y^-_{\lambda x} 
- {Y^-}^t_x X^-_{\lambda} - {Y^-}^t X^-_{\lambda x} \\
&=
{Y^-}^t Y^-_{\lambda} + {X^-}^t (V X^- - \lambda X^-)_{\lambda}
- (VX^- - \lambda X^-)^t X^-_{\lambda} - {Y^-}^t Y^-_{\lambda} \\
&= - {X^-}^t X^-,
\end{aligned}
\end{equation*}
where we've used $X^-_x = Y^-$ and $Y^-_x  = V(x) X^- - \lambda X^-$.
Integrating from $- \infty$ to $x$, we find 
\begin{equation*}
{X^-}^t Y^-_{\lambda} - {Y^-}^t X^-_{\lambda} =
- \int_{-\infty}^x {X^-}^t (y;\lambda) X^- (y;\lambda) dy,
\end{equation*}
from which it is clear that ${X^-}^t Y^-_{\lambda} - {Y^-}^t X^-_{\lambda}$ is
negative definite, which implies that $\tilde{\Omega}_1$ is negative 
definite. 

Likewise, even though $x_{\infty}$ is fixed, we can differentiate 
\begin{equation*}
{Y^+ (x; \lambda)}^t X^+_{\lambda} (x; \lambda) 
- {X^+ (x; \lambda)}^t Y^+_{\lambda} (x; \lambda)
\end{equation*}
with respect to 
$x$ and evaluate at $x = x_{\infty}$ to find 
\begin{equation*}
{Y^+_{\infty}}^t \partial_{\lambda} X^+_{\infty} 
- {X^+_{\infty}}^t \partial_{\lambda} Y^+_{\infty} 
= - \int_{x_{\infty}}^{+\infty} {X^-}^t (y;\lambda) X^- (y;\lambda) dy,
\end{equation*}
from which it is clear that ${Y^+_{\infty}}^t \partial_{\lambda} X^+_{\infty} 
- {X^+_{\infty}}^t \partial_{\lambda} Y^+_{\infty}$ is
negative definite, which implies that $\tilde{\Omega}_2$ is negative 
definite. 

We conclude that $\tilde{\Omega}$ is negative definite, at which point we can 
employ Lemma 3.11 from \cite{HS} to obtain the claim. 

For the case of $\tilde{W}^- (\lambda)$, we have 
$\tilde{W}^- (\lambda) = - \tilde{W}_1 (\lambda) \tilde{W}_2 (\lambda)$,
where 
\begin{equation*}
\tilde{W}_1 (\lambda) = 
({R^-} + i{S^-}) (R^- - iS^-)^{-1},
\end{equation*}
and $\tilde{W}_2 (\lambda)$ is as above. Computing as before, we find 
\begin{equation*}
\frac{\partial \tilde{W}_1}{\partial \lambda} = i \tilde{W}_1 \tilde{\Omega}_1,
\end{equation*}
where in this case
\begin{equation*}
\tilde{\Omega}_1 = \Big( ({R^-} - i {S^-})^{-1} \Big)^* 
\Big{\{} 2 {R^-}^t S^-_{\lambda} - 2 {S^-}^t R^-_{\lambda} \Big{\}} \Big( (R^- - iS^-)^{-1} \Big).
\end{equation*} 
Recalling that $R^-_{\lambda} = 0$, we see that the nature of $\tilde{\Omega}_1$ is 
determined by ${R^-}^t S^-_{\lambda}$. Recalling that 
\begin{equation*}
S^- (\lambda) = 
\begin{pmatrix}
\mu_{n+1}^- (\lambda) r_1^- & \mu_{n+2}^- (\lambda) r_{2}^- & \dots & \mu_{2n}^- (\lambda) r_n^-
\end{pmatrix},
\end{equation*}
we have (recalling $\mu_{n+j}^+ (\lambda) = \sqrt{\nu_{j}^- - \lambda}\,$)
\begin{equation*}
S^-_{\lambda} (\lambda) = 
- \frac{1}{2} \begin{pmatrix}
\frac{1}{\mu_{n+1}^- (\lambda)} r_1^- & \frac{1}{\mu_{n+2}^- (\lambda)} r_{2}^+ & \dots & \frac{1}{\mu_{2n}^- (\lambda)} r_n^-
\end{pmatrix}.
\end{equation*}
In this way, orthogonality of the $\{r_j^-\}_{j=1}^n$ leads to the relation 
\begin{equation} \label{RSlambda2}
{R^-}^t S^-_{\lambda} = -\frac{1}{2} 
\begin{pmatrix}
\frac{1}{\mu_{n+1}^- (\lambda)} & 0 & \dots & 0 \\
0& \frac{1}{\mu_{n+2}^- (\lambda)} & \dots & 0 \\
\vdots & \vdots & \vdots & \vdots \\
0 & 0 & \dots & \frac{1}{\mu_{2n}^- (\lambda)}
\end{pmatrix}.
\end{equation}
Since the $\{\mu_{n+j}^-\}_{j=1}^n$ are all positive (for $\lambda < \nu_{\min}$), 
we see that $\tilde{\Omega}_1$ is self-adjoint and negative definite. 

The matrix $\tilde{W}_2$ is unchanged, so we can draw the same 
conclusion about monotonicity. 
\end{proof}

\subsection{Lower Bound on the Spectrum of $H$} \label{bound_section}

We have already seen that if the eigenvalues of $V_{\pm}$ are all non-negative
then the essential spectrum of $H$ is bounded below by 0. In fact, it's 
bounded below by the smallest eigenvalue of the two matrices $V_{\pm}$. For 
the point spectrum, if $\lambda$ is an eigenvalue of $H$ then there 
exists a corresponding eigenfunction $\phi (\cdot; \lambda) \in H^1 (\mathbb{R})$.
If we take an $L^2 (\mathbb{R})$ inner product of (\ref{main}) with 
$\phi$ we find 
\begin{equation*}
\lambda \|\phi\|_2^2 = \|\phi'\|_2^2 + \langle V\phi, \phi\rangle
\ge - C \|\phi\|_2^2,
\end{equation*}
for some contant $C > 0$ taken so that 
$|\langle V\phi,\phi\rangle| \le C \|\phi\|_2^2$ for all 
$\phi \in H^1 (\mathbb{R})$. 
We conclude that $\sigma_{pt} (H) \subset [-C, \infty)$. For example, 
$C = \|V\|_{\infty}$ clearly works. In what follows, 
we will take a value $\lambda_{\infty}$ sufficiently large, and in 
particular we will take $\lambda_{\infty} > C$ (additional requirements 
will be added as well, but they can all be accommodated by taking 
$\lambda_{\infty}$ larger, so that this initial restriction continues
to hold).

\subsection{The Top Shelf} \label{top_section}

Along the top shelf $\Gamma_+$, the Maslov index counts intersections of the Lagrangian
subspaces $\ell^- (x_{\infty}; \lambda)$ and 
$\ell^+_{\infty} (\lambda) = \ell^+ (x_{\infty}; \lambda)$.
Such intersections will correspond with solutions of (\ref{main}) that 
decay at both $\pm \infty$, and hence will correspond with eigenvalues. 
Moreover, the dimension of these intersections will correspond with 
the dimension of the space of solutions that decay at both $\pm \infty$,
and so will correspond with the geometric multiplicity of the eigenvalues. 
Finally, we have seen that the eigenvalues of $\tilde{W} (x; \lambda)$
rotate monotonically counterclockwise as $\lambda$ decreases from $0$ 
to $-\lambda_{\infty}$ (i.e., as $\Gamma_+$ is traversed), and so the 
Maslov index on $\Gamma_+$ is a direct count of the crossings, including
multiplicity (with no cancellations arising from crossings in opposite
directions). We conclude that the Maslov index associated with this path 
will be a count, including multiplicity, of the negative eigenvalues of 
$H$; i.e., of the Morse index. We can express 
these considerations as  
\begin{equation*}
\Mor(H) = \Mas(\ell^-, \ell^+_{\infty}; \Gamma_+).
\end{equation*}

\subsection{The Bottom Shelf} \label{bottom_section}

For the bottom shelf, we have 
\begin{equation} \label{tildeWbottom1}
\begin{aligned}
\tilde{W}^- (\lambda) &= - (R^- + i S^- (\lambda)) (R^- - i S^- (\lambda))^{-1} \\
&\times (X^+_{\infty} (\lambda) - i Y^+_{\infty} (\lambda)) 
(X^+_{\infty} (\lambda) + i Y^+_{\infty} (\lambda))^{-1}.
\end{aligned}
\end{equation}  
By choosing $x_{\infty}$ suitably large, we can ensure that the frame $\mathbf{X}^+_{\infty} (\lambda)$
is as close as we like to the frame $\mathbf{R}^+ (\lambda)$, where we recall
$\mathbf{R}^- = {R^- \choose S^-}$ and $\mathbf{R}^+ = {R^+ \choose S^+}$. (As noted 
in Remark \ref{discontinuous} $\ell^-_{+\infty} (\lambda)$ is not necessarily 
continuous in $\lambda$, but $\ell^+_{\mathbf{R}} (\lambda)$ certainly is 
continuous in $\lambda$.) We will 
proceed by analyzing the matrix 
\begin{equation} \label{tildeWbottom}
\tilde{\mathcal{W}}^- (\lambda) := - (R^- + i S^- (\lambda)) (R^- - i S^- (\lambda))^{-1} (R^+ - i S^+ (\lambda)) (R^+ + i S^+ (\lambda))^{-1},
\end{equation}  
for which we will be able to conclude that for $\lambda < \nu_{\min}$, 
$-1$ is never an eigenvalue. By continuity, we will be able to draw conclusions
about $\tilde{W}^- (\lambda)$ as well.

\begin{lemma} \label{bottom_lemma}
For any $\lambda < \nu_{\min}$ the spectrum of  
$\tilde{\mathcal{W}}^- (\lambda)$ does not include $-1$.
\end{lemma}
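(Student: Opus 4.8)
The plan is to read off the claim from the intersection characterization of the eigenvalue $-1$. Observe that $\tilde{\mathcal{W}}^-(\lambda)$ in (\ref{tildeWbottom}) is exactly the matrix $\tilde W$ of (\ref{tildeW}) formed from the pair of Lagrangian frames $\mathbf{X}_1 = \mathbf{R}^-(\lambda)$ and $\mathbf{X}_2 = \mathbf{R}^+(\lambda)$, which we verified at the end of Section \ref{ODEsection} are indeed frames for Lagrangian subspaces $\ell_{\mathbf{R}}^-(\lambda)$ and $\ell_{\mathbf{R}}^+(\lambda)$. Hence, by Theorem \ref{intersection_theorem}, $-1 \in \sigma(\tilde{\mathcal{W}}^-(\lambda))$ if and only if $\ell_{\mathbf{R}}^-(\lambda) \cap \ell_{\mathbf{R}}^+(\lambda) \neq \{0\}$, so it suffices to show this intersection is trivial for every $\lambda < \nu_{\min}$.

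To that end, I would first rewrite each asymptotic subspace as a graph. Since $\{r_j^-\}_{j=1}^n$ is orthonormal, $R^-$ is an orthogonal matrix, so $\ell_{\mathbf{R}}^-(\lambda) = \{ {p \choose S^-(\lambda)(R^-)^{-1} p} : p \in \mathbb{R}^n \}$. The $j$-th column of $S^-(\lambda)$ is $\mu_{n+j}^-(\lambda) r_j^- = \sqrt{\nu_j^- - \lambda}\, r_j^- = (V_- - \lambda I)^{1/2} r_j^-$, which is legitimate because (A2) and $\lambda < \nu_{\min} \le \nu_j^-$ make $V_- - \lambda I$ positive definite; thus $S^-(\lambda) = (V_- - \lambda I)^{1/2} R^-$ and $\ell_{\mathbf{R}}^-(\lambda) = \{ {p \choose (V_- - \lambda I)^{1/2} p} : p \in \mathbb{R}^n\}$. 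The same computation with $R^+$ (also orthogonal, its columns being a reordering of $\{r_j^+\}_{j=1}^n$) and the definition (\ref{RplusSplus}) of $S^+(\lambda)$, using $\mu_j^+(\lambda) = -\sqrt{\nu_{n+1-j}^+ - \lambda}$, yields $S^+(\lambda) = -(V_+ - \lambda I)^{1/2} R^+$, so that $\ell_{\mathbf{R}}^+(\lambda) = \{ {p \choose -(V_+ - \lambda I)^{1/2} p} : p \in \mathbb{R}^n\}$.

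With both subspaces in graph form, a vector ${p \choose q}$ in $\ell_{\mathbf{R}}^-(\lambda) \cap \ell_{\mathbf{R}}^+(\lambda)$ must satisfy $q = (V_- - \lambda I)^{1/2} p$ and $q = -(V_+ - \lambda I)^{1/2} p$, hence $\big[(V_- - \lambda I)^{1/2} + (V_+ - \lambda I)^{1/2}\big] p = 0$. For $\lambda < \nu_{\min}$ both square roots are symmetric positive-definite matrices, so their sum is positive definite and therefore invertible; consequently $p = 0$ and then $q = 0$. Thus $\ell_{\mathbf{R}}^-(\lambda) \cap \ell_{\mathbf{R}}^+(\lambda) = \{0\}$, and Theorem \ref{intersection_theorem} gives the conclusion.

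As for difficulty: once the graph representation is in place the argument is immediate, so the only care required is bookkeeping — tracking the column orderings in $\mathbf{R}^\pm$ and the sign conventions on $\mu_j^\pm$ so that the minus sign genuinely attaches to the $V_+$ branch, and recording that the existence of the square roots (equivalently, positive-definiteness of $V_\pm - \lambda I$) is precisely where the hypotheses $\lambda < \nu_{\min}$ and (A2) are used. Note that no homotopy or monotonicity input enters here; that machinery is needed only afterward to transfer the conclusion from $\tilde{\mathcal{W}}^-(\lambda)$ to $\tilde W^-(\lambda)$ via the closeness of $\mathbf{X}^+_\infty(\lambda)$ to $\mathbf{R}^+(\lambda)$.
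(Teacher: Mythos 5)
Your proof is correct, but it takes a genuinely different route from the paper's. The paper proves triviality of $\ell^-_{\mathbf{R}}(\lambda)\cap\ell^+_{\mathbf{R}}(\lambda)$ by an induction on linear independence: it shows directly that the $2n$ columns of $\mathbf{R}^-$ and $\mathbf{R}^+$ are linearly independent, the engine being the strict gap $\mu_{n+j}^- - \mu_k^+ > 0$ between the unstable rates at $-\infty$ and the stable rates at $+\infty$, with the induction needed precisely because the two eigenbases $\{r_j^-\}$ and $\{r_j^+\}$ of $V_-$ and $V_+$ are unrelated. You instead exploit the spectral decompositions to write $S^-(\lambda)=(V_--\lambda I)^{1/2}R^-$ and $S^+(\lambda)=-(V_+-\lambda I)^{1/2}R^+$, so that the two subspaces become graphs of $+(V_--\lambda I)^{1/2}$ and $-(V_+-\lambda I)^{1/2}$, and a nontrivial intersection would put a nonzero vector in the kernel of the sum of two symmetric positive definite matrices. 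This is the same sign-separation fact repackaged, but your version eliminates the induction and the basis bookkeeping entirely, and it makes transparent exactly where $\lambda<\nu_{\min}$ and (A2) enter (positive definiteness of $V_\pm-\lambda I$). The paper's argument is more elementary in that it never invokes matrix square roots, and its linear-independence formulation generalizes more readily to situations where the asymptotic subspaces are not graphs over the first component; your argument is shorter and arguably the cleaner proof of this particular lemma. Both correctly reduce the claim to the intersection count via Theorem \ref{intersection_theorem}, and your closing remark that no monotonicity or homotopy input is needed here is accurate.
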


\begin{proof} 
We need only show that for any $\lambda < \nu_{\min}$
the $2n$ vectors comprising the columns of $\mathbf{R}^-$ and $\mathbf{R}^+$
are linearly independent. We proceed by induction, first establishing that
any single column of $\mathbf{R}^-$ is linearly independent of the columns
of $\mathbf{R}^+$. Suppose not. Then there is some $j \in \{1, 2, \dots, n\}$,
along with some collection of constants $\{c_k\}_{k=1}^n$ so that 
\begin{equation} \label{setup1}
\scripty{r}_{\,n+j}^{\,-} = \sum_{k=1}^n c_k \scripty{r}_{\,k}^{\,+}.
\end{equation} 
Recalling the definitions of $\scripty{r}_{\,n+j}^{\,-}$ and $\scripty{r}_{\,k}^{\,+}$,
we have the two equations 
\begin{equation*}
\begin{aligned}
r_j^- &= \sum_{k=1}^n c_k r^+_{n+1-k} \\
\mu_{n+j}^- r_j^- &= \sum_{k=1}^n c_k \mu_k^+ r^+_{n+1-k}.
\end{aligned}
\end{equation*}
Multiplying the first of these equations by $\mu_{n+j}^-$, 
and subtracting the second equation from the result, we find
\begin{equation*}
0 = \sum_{k=1}^n (\mu_{n+j}^- - \mu_k^+) c_k r^+_{n+1-k}.
\end{equation*}
Since the collection $\{r^+_{n+1-k}\}_{k=1}^n$ is linearly 
independent, and since $\mu_{n+j}^- - \mu_k^+ > 0$ for all 
$k \in \{1, 2, \dots, n\}$ (for $\lambda < \nu_{\min}$), 
we conclude that the constants 
$\{c_k\}_{k=1}^n$ must all be zero, but this contradicts 
(\ref{setup1}). 

For the induction step, suppose that for some $1 \le m < n$, 
any $m$ elements of the collection $\{\scripty{r}_{\,n+j}^{\,-}\}_{j=1}^n$
are linearly independent of the set $\{ \scripty{r}_{\,k}^{\,+} \}_{k=1}^n$.
We want to show that any $m+1$ elements of the collection 
$\{\scripty{r}_{\,n+j}^{\,-}\}_{j=1}^n$
are linearly independent of the set $\{ \scripty{r}_{\,k}^{\,+} \}_{k=1}^n$.
If not, then by a change of labeling if necessary there exist constants
$\{c_l^-\}_{l=2}^{m+1}$ and $\{c_k^+\}_{k=1}^n$ so that  
\begin{equation} \label{setup2}
\scripty{r}_{\,n+1}^{\,-} = \sum_{l=2}^{m+1} c_l^- \scripty{r}_{\,n+l}^{\,-}
+ \sum_{k=1}^n c_k^+ \scripty{r}_{\,k}^{\,+}.
\end{equation} 
Again, we have two equations 
\begin{equation*}
\begin{aligned}
r_1^- &= \sum_{l=2}^{m+1} c_l^- r_l^- + \sum_{k=1}^n c_k^+ r_{n+1-k}^+ \\
\mu^-_{n+1} r_1^- &= \sum_{l=2}^{m+1} c_l^- \mu^-_{n+l} r_l^- 
+ \sum_{k=1}^n c_k^+ \mu_k^+ r_{n+1-k}^+.
\end{aligned}
\end{equation*}
Multiplying the first of these equations by $\mu^-_{n+1}$, and 
subtracting the second equation from the result, we obtain the 
relation 
\begin{equation*}
0 = \sum_{l=2}^{m+1} c_l^- (\mu^-_{n+1} - \mu^-_{n+l}) r_l^- 
+ \sum_{k=1}^n c_k^+ (\mu^-_{n+1} - \mu_k^+) r_{n+1-k}^+.
\end{equation*}
By our induction hypothesis, the vectors on the right-hand side
are all linearly independent, and since $\mu^-_{n+1} - \mu_k^+ > 0$
for all $k \in {1, 2, \dots, n}$, we can conclude that 
$c_k^+ = 0$ for all $k \in {1, 2, \dots, n}$. (Notice that we
make no claim about the $c_l^-$.) Returning to (\ref{setup2}),
we obtain a contradiction to the linear independence of 
the collection $\{\scripty{r}_{\,n+j}^{\,-}\}_{j=1}^n$.

Continuing the induction up to $m = n-1$ gives the claim for 
$\lambda < \nu_{\min}$. 
\end{proof}

\begin{remark} It is important to note that we do not include the 
case $\lambda = \nu_{\min}$ in our lemma, and indeed the lemma 
does not generally hold in this case. For example, consider the 
case in which $V(x)$ vanishes identically at both $\pm \infty$
(i.e., $V_- = V_+ = 0$). In this case, we can take $R^- = I$, 
$S^- = \sqrt{-\lambda} I$, $R^+ = \check{I}$, and 
$S^+ = -\sqrt{-\lambda} \check{I}$, where 
\begin{equation*}
\check{I} = 
\begin{pmatrix}
0 & 0 & \dots & 1 \\
0 & 0 & 1 & 0 \\
\vdots & \vdots & \vdots &\vdots \\
1 & 0 & \dots & 0  
\end{pmatrix}.
\end{equation*} 
We easily find 
\begin{equation*}
\tilde{\mathcal{W}}^- (\lambda) = - \frac{(1+i\sqrt{-\lambda})^2}{(1-i\sqrt{-\lambda})^2} I,
\end{equation*}
and we see explicitly that $\tilde{W}^- (0) = - I$, so that all $n$ eigenvalues
reside at $-1$. Moreover, as $\lambda$ proceeds from 0 toward $- \infty$ the 
eigenvalues of $\tilde{W}^- (\lambda)$ remain coalesced, and move monotonically 
counterclockwise around $S^1$, returning to $-1$ in the limit as $\lambda \to - \infty$.
In this case, we can conclude that for the path from $0$ to $-\lambda_{\infty}$,
the Maslov index does not increment. 
\end{remark}

We immediately obtain the following lemma. 

\begin{lemma} \label{bottom_shelf_case1}   
Let $V \in C(\mathbb{R})$ be a real-valued symmetric matrix, and suppose (A1)
and (A2) hold. If 
\begin{equation*}
\dim (\ell^-_{\mathbf{R}} (0) \cap \ell^+_{\mathbf{R}} (0)) = 0
\end{equation*}
then we can choose $x_{\infty}$ sufficiently large so that we will have 
\begin{equation*}
\dim (\ell^-_{\mathbf{R}} (\lambda) \cap \ell^+_{\infty} (\lambda)) = 0
\end{equation*}
for all $\lambda \in [-\lambda_{\infty}, 0]$. It follows that in 
this case
\begin{equation*}
\Mas (\ell^-, \ell^+_{\infty}; \Gamma_-) = 0.
\end{equation*}
Moreover, if 
\begin{equation*}
\dim (\ell^-_{\mathbf{R}} (0) \cap \ell^+_{\mathbf{R}} (0)) \ne 0
\end{equation*}
then given any $\lambda_0$ with $0 < \lambda_0 < \lambda_{\infty}$
we can take $x_{\infty}$ sufficiently large so that  
\begin{equation*}
\dim (\ell^-_{\mathbf{R}} (\lambda) \cap \ell^+_{\infty} (\lambda)) = 0
\end{equation*}
for all $\lambda \in [-\lambda_{\infty}, -\lambda_0]$.
\end{lemma}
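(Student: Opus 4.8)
\emph{Proof strategy.} The plan is to reduce both assertions to the statement that $-1$ is not an eigenvalue of the unitary matrix $\tilde W^-(\lambda)$ of \eqref{tildeWbottom1} on the relevant $\lambda$-interval, and then to obtain that from Lemma \ref{bottom_lemma} (a pointwise statement about the nearby matrix $\tilde{\mathcal W}^-(\lambda)$ of \eqref{tildeWbottom}) by a compactness-and-perturbation argument. The starting observation is that, in the notation of \eqref{tildeW}, $\tilde W^-(\lambda)$ is precisely the matrix attached to the pair $(\ell^-_{\mathbf R}(\lambda),\ell^+_\infty(\lambda))$ while $\tilde{\mathcal W}^-(\lambda)$ is the one attached to $(\ell^-_{\mathbf R}(\lambda),\ell^+_{\mathbf R}(\lambda))$; hence by Theorem \ref{intersection_theorem},
\[
\dim\big(\ell^-_{\mathbf R}(\lambda)\cap\ell^+_\infty(\lambda)\big)=\dim\ker\big(\tilde W^-(\lambda)+I\big),
\]
so it suffices to show $-1\notin\sigma(\tilde W^-(\lambda))$, and Lemma \ref{bottom_lemma} is exactly the corresponding fact for $\tilde{\mathcal W}^-(\lambda)$ when $\lambda<\nu_{\min}$.

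First I would make quantitative the assertion that $\mathbf X^+_\infty(\lambda)$ is close to $\mathbf R^+(\lambda)$. By Lemma \ref{ODElemma} the decaying solutions at $+\infty$ have the form $\mathbf p^+_j(x;\lambda)=e^{\mu^+_j(\lambda)x}(\scripty{r}_{\,j}^{\,+}+\mathbf E^+_j)$ with $\mathbf E^+_j=\mathbf O((1+|x|)^{-1})$ uniformly for $\lambda$ in compact subsets of $(-\infty,\nu_{\min}]$. Dividing the $j$-th column of $\mathbf X^+_\infty(\lambda)$ by the (invertible) scalar $e^{\mu^+_j(\lambda)x_\infty}$ produces a frame for the \emph{same} subspace $\ell^+_\infty(\lambda)$ with $j$-th column $\scripty{r}_{\,j}^{\,+}+\mathbf E^+_j(x_\infty;\lambda)$, which converges to $\scripty{r}_{\,j}^{\,+}$ --- the $j$-th column of $\mathbf R^+(\lambda)$ --- uniformly in $\lambda$ on any compact interval as $x_\infty\to\infty$. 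Since the matrix \eqref{tildeW} depends only on the Lagrangian subspaces and not on the chosen frames, and since the factors $R^\pm\pm iS^\pm(\lambda)$ and $X^+_\infty\pm iY^+_\infty$ are invertible (by \cite{HLS}) with invertible limits, this uniform convergence gives $\tilde W^-(\lambda)\to\tilde{\mathcal W}^-(\lambda)$ uniformly on a compact $\lambda$-interval, inverses included; the factor $R^-\pm iS^-(\lambda)$ is independent of $x_\infty$ and continuous in $\lambda$, hence harmless.

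Now I would run the compactness argument. In the first case, recall $\nu_{\min}\ge0$ by (A2), so Lemma \ref{bottom_lemma} already gives $-1\notin\sigma(\tilde{\mathcal W}^-(\lambda))$ for $\lambda\in[-\lambda_\infty,0)$, and the hypothesis $\dim(\ell^-_{\mathbf R}(0)\cap\ell^+_{\mathbf R}(0))=0$ supplies the missing endpoint $-1\notin\sigma(\tilde{\mathcal W}^-(0))$; thus $-1\notin\sigma(\tilde{\mathcal W}^-(\lambda))$ on the compact set $[-\lambda_\infty,0]$. Since $M\mapsto\dist(-1,\sigma(M))$ is continuous and strictly positive there, it is bounded below by some $\delta>0$. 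Taking $x_\infty$ large enough that $\|\tilde W^-(\lambda)-\tilde{\mathcal W}^-(\lambda)\|<\delta/2$ on $[-\lambda_\infty,0]$, standard spectral perturbation theory for unitary matrices yields $-1\notin\sigma(\tilde W^-(\lambda))$ there, i.e. $\dim(\ell^-_{\mathbf R}(\lambda)\cap\ell^+_\infty(\lambda))=0$ on $[-\lambda_\infty,0]$. For the Maslov index: $-1$ is then uniformly bounded away from $\sigma(\tilde W^-(\lambda))$ along $\Gamma_-$, so in Definition \ref{dfnDef3.6} we may take the trivial partition and pick the $\epsilon_j$ so small that each arc $A_j$ lies inside the spectral gap around $-1$; then $k(\cdot,\epsilon_j)\equiv 0$ and $\Mas(\ell^-,\ell^+_\infty;\Gamma_-)=0$. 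The second assertion is identical, carried out on the compact interval $[-\lambda_\infty,-\lambda_0]\subset(-\infty,\nu_{\min})$, where Lemma \ref{bottom_lemma} alone already gives $-1\notin\sigma(\tilde{\mathcal W}^-(\lambda))$ and no hypothesis at $\lambda=0$ is needed.

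The main obstacle is the uniform-in-$\lambda$ bookkeeping: turning the pointwise Lemma \ref{bottom_lemma} into a statement for $\tilde W^-$ with a \emph{single} $x_\infty$ requires the remainder estimates of Lemma \ref{ODElemma} to be uniform in $\lambda$ on the compact interval, the various complex matrix factors to be invertible with converging inverses, and compactness of the $\lambda$-interval to supply the uniform spectral gap $\delta$. The one subtle point is that in the first case the endpoint $\lambda=0$ may coincide with $\nu_{\min}$, where Lemma \ref{bottom_lemma} is known to fail (cf.\ the remark following it); the hypothesis $\dim(\ell^-_{\mathbf R}(0)\cap\ell^+_{\mathbf R}(0))=0$ is precisely what keeps $\lambda=0$ in the ``good'' set, so that the spectral gap remains uniformly positive on a compact interval.
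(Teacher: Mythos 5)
Your proof is correct and follows the paper's argument essentially verbatim: Lemma \ref{bottom_lemma} together with the hypothesis at $\lambda=0$ (needed only when $\nu_{\min}=0$) yields a uniform spectral gap around $-1$ for $\tilde{\mathcal W}^-(\lambda)$ on the compact $\lambda$-interval, and uniform convergence $\tilde W^-\to\tilde{\mathcal W}^-$ as $x_\infty\to\infty$ transfers this gap to $\tilde W^-$ by the triangle inequality. The only cosmetic difference is that you extract the uniform convergence directly from the remainder bounds of Lemma \ref{ODElemma}, whereas the paper compactifies via $\tau_\infty=(e^{x_\infty}-1)/(e^{x_\infty}+1)$ and invokes uniform continuity on the resulting compact domain.
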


\begin{proof} First, if 
\begin{equation*}
\dim (\ell^-_{\mathbf{R}} (0) \cap \ell^+_{\mathbf{R}} (0)) = 0
\end{equation*}
then none of the eigenvalues of $\tilde{\mathcal{W}}^- (0)$ is $-1$,
and so according to Lemma \ref{bottom_lemma}, none of the 
eigenvalues of $\tilde{\mathcal{W}}^- (\lambda)$ is $-1$ for 
any $\lambda \in [-\lambda_{\infty}, 0]$. In particular, 
since the interval $[-\lambda_{\infty}, 0]$ is compact
there exists some $\epsilon > 0$ so that each eigenvalue 
$\tilde{\omega} (\lambda)$ of $\tilde{\mathcal{W}}^- (\lambda)$ 
satisfies 
\begin{equation*}
|\tilde{\omega} (\lambda) + 1| > \epsilon
\end{equation*}
for all $\lambda \in [-\lambda_{\infty}, 0]$. 

Similarly as above, we can make the change of variables 
\begin{equation*}
x_{\infty} = \ln (\frac{1+\tau_{\infty}}{1-\tau_{\infty}}),
\iff
\tau_{\infty} = \frac{e^{x_{\infty}}-1}{e^{x_{\infty}} + 1}.
\end{equation*}
This allows us to view $\tilde{W}^-$ as a continuous function on 
the compact domain $(x_{\infty},\lambda) \in [1-\delta, 1] \times [-\lambda_{\infty}, 0]$,
where $\delta > 0$ is small, indicating that $x_{\infty}$ is taken 
to be large.
We see that $\tilde{W}^-$ is uniformly continuous and so by choosing 
$\tau_{\infty}$ sufficiently close to 1, we can force the eigenvalues
of $\tilde{W}^-$ to be as close to the eigenvalues of 
$\tilde{\mathcal{W}}^- (\lambda)$ as we like. We take $\tau_{\infty}$
sufficiently close to 1 so that for each $\lambda \in [-\lambda_{\infty}, 0]$
and each eigenvalue $\tilde{\omega}$ of $\tilde{\mathcal{W}}^- (\lambda)$ 
there is a corresponding eigenvalue of $\tilde{W}^-$, which we denote $\omega (\lambda)$ 
so that $|\tilde{\omega} (\lambda) - \omega (\lambda)| < \epsilon/2$. But
then 
\begin{equation*}
\begin{aligned}
\epsilon &< |\tilde{\omega} (\lambda)+1| = |\tilde{\omega} (\lambda) - \omega (\lambda)
+ \omega (\lambda) + 1| \\
&\le 
|\tilde{\omega} (\lambda) - \omega (\lambda)| + |\omega (\lambda) + 1|
< \frac{\epsilon}{2} + |\omega (\lambda) + 1|, 
\end{aligned}
\end{equation*}
from which we conclude that 
\begin{equation*}
|\omega (\lambda) + 1| \ge \frac{\epsilon}{2},
\end{equation*}   
for all $\lambda \in [-\lambda_{\infty}, 0]$.

For the {\it Moreover} claim, we simply replace $[-\lambda_{\infty}, 0]$
with $[-\lambda_{\infty}, -\lambda_0]$ in the above argument.
\end{proof}

\subsection{The Left Shelf} \label{left_section}

For the left shelf $\Gamma_{\infty}$, we need to understand the Maslov index associated with 
$\tilde{W} (x; - \lambda_{\infty})$ (with $\lambda_{\infty}$ sufficiently large) 
as $x$ goes from $-\infty$ to $x_{\infty}$ (keeping in mind that the path $\Gamma_{\infty}$ 
reverses this flow). In order to accomplish this, we follow the approach of 
\cite{GZ, ZH} in developing large-$|\lambda|$ estimates on solutions 
of (\ref{main}), uniformly in $x$. For $\lambda < 0$, we set 
\begin{equation*}
\xi = \sqrt{-\lambda} x; \quad \phi (\xi) = y(x), 
\end{equation*} 
so that (\ref{main}) becomes 
\begin{equation*}
\phi'' (\xi) + \frac{1}{\lambda} V (\frac{\xi}{\sqrt{-\lambda}}) \phi = \phi.
\end{equation*}
Setting $\Phi_1 = \phi$, $\Phi_2 = \phi'$, 
and $\Phi = {\Phi_1 \choose \Phi_2} \in \mathbb{R}^{2n}$, we can 
express this equation as 
\begin{equation*}
\Phi' = \mathbb{A} (\xi; \lambda) \Phi; \quad 
\mathbb{A} (\xi; \lambda) =
\begin{pmatrix}
0 & I \\
I - \frac{1}{\lambda} V (\frac{\xi}{\sqrt{-\lambda}}) & 0
\end{pmatrix}.
\end{equation*}

We begin by looking for solutions that decay as $x \to - \infty$ (and so 
as $\xi \to - \infty$); i.e., we begin by constructing the frame 
$\mathbf{X}^- (x; - \lambda_{\infty})$. It's convenient to write 
\begin{equation*}
\mathbb{A} (\xi; \lambda) = \mathbb{A}_- (\lambda) + \mathbb{E}_- (\xi; \lambda),
\end{equation*} 
where 
\begin{equation*}
\mathbb{A}_- (\lambda) = 
\begin{pmatrix}
0 & I \\
I - \frac{1}{\lambda} V_- & 0
\end{pmatrix};
\quad 
\mathbb{E}_- (\xi; \lambda) = 
\begin{pmatrix}
0 & 0 \\
\frac{1}{\lambda} (V_- - V(\frac{\xi}{\sqrt{-\lambda}})) & 0
\end{pmatrix}.
\end{equation*}

Fix any $M \gg 0$ and note that according to (A1), we have 
\begin{equation*}
\begin{aligned}
\int_{-\infty}^M |\mathbb{E}_- (\xi; \lambda)| d\xi 
& \le \frac{1}{|\lambda|} \int_{-\infty}^M |V(\frac{\xi}{\sqrt{-\lambda}}) - V_-| d\xi \\
& = \frac{1}{|\lambda|} \int_{-\infty}^{\frac{M}{\sqrt{-\lambda}}} 
|V(x) - V_-| \sqrt{-\lambda} dx \le \frac{K}{\sqrt{-\lambda}}, 
\end{aligned}
\end{equation*}
for some constant $K = K(M)$. Recalling that we are denoting
the eigenvalues of $V_-$ by $\{\nu_j^-\}_{j=1}^n$, we readily check
that the eigenvalues of $\mathbb{A}_- (\lambda)$ can be 
expressed as 
\begin{equation*}
\begin{aligned}
\hat{\mu}_j^- (\lambda) &= - \sqrt{1 - \frac{\nu^-_{m+1-j}}{\lambda}} = \frac{1}{\sqrt{-\lambda}} \mu_j^- \\
\hat{\mu}_{n+j}^- (\lambda) &= \sqrt{1 - \frac{\nu_j^-}{\lambda}} = \frac{1}{\sqrt{-\lambda}} \mu_{n+j}^-,
\end{aligned}
\end{equation*} 
for $j = 1,2, \dots, n$ (ordered, as usual, so that $j < k$ implies 
$\hat{\mu}_j^- \le \hat{\mu}_k^-$). In order to select a solution decaying with 
rate $\hat{\mu}_{n+j}^-$ (as $\xi \to -\infty$), we look for solutions 
of the form 
$\Phi (\xi; \lambda) = e^{\hat{\mu}_{n+j}^- (\lambda) \xi} Z (\xi; \lambda)$,
for which $Z$ satisfies 
\begin{equation*}
Z' = (\mathbb{A}_- (\lambda) - \hat{\mu}_{m+j} (\lambda) I) Z 
+ \mathbb{E}_- (\xi; \lambda) Z.
\end{equation*} 

Proceeding similarly as in the proof of Lemma \ref{ODElemma}, we obtain 
a collection of solutions
\begin{equation*}
Z_{n+j}^- (\xi; \lambda) =  
\hat{\scripty{r}}_{\,n+j}^{\,-} 
+ \mathbf{O} (|\lambda|^{-1/2}), 
\end{equation*}
which lead to 
\begin{equation*}
\Phi_{n+j}^- (\xi; \lambda) =  e^{\hat{\mu}_{n+j}^- (\lambda) \xi}
(\hat{\scripty{r}}_{\,n+j}^{\,-} 
+ \mathbf{O} (|\lambda|^{-1/2})), 
\end{equation*}
where $\hat{\scripty{r}}$ corresponds with $\scripty{r}$, with 
$\mu$ is replaced by $\hat{\mu}$.
Returning to original coordinates, we construct the frame $\mathbf{X}^- (x;\lambda)$
out of basis elements
\begin{equation*}
\begin{pmatrix}
y (x) \\ y' (x)
\end{pmatrix}
= e^{\sqrt{-\lambda} \hat{\mu}_{n+j}^- (\lambda) x}
\Big(
\begin{pmatrix}
r_j^- \\ \sqrt{-\lambda} \hat{\mu}_{n+j}^- r_j^- 
\end{pmatrix}
+
\begin{pmatrix}
\mathbf{O} (|\lambda|^{-1/2}) \\ \mathbf{O} (1)
\end{pmatrix}
\Big).
\end{equation*}

Recalling that when specifying a frame for $\ell^-$ we can 
view the exponential multipliers as expansion coefficients,
we see that we can take as our frame for $\ell^-$ the 
matrices
\begin{equation*}
\begin{aligned}
X^- (x; \lambda) &= R^- + \mathbf{O} (|\lambda|^{-1/2}) \\
Y^- (x; \lambda) &= S^- + \mathbf{O} (1),
\end{aligned}
\end{equation*}
where the $\mathbf{O} (\cdot)$ terms are uniform for 
$x \in (-\infty, M]$, and we have observed that  
$\mu_j^- = \sqrt{-\lambda} \hat{\mu}_j^-$, for 
$j = 1, 2, \dots, 2n$. Likewise, we find that for 
$-\lambda > 0$ sufficiently large   
\begin{equation*}
\begin{aligned}
X^+_{\infty} (\lambda) &= R^+ + \mathbf{O} (|\lambda|^{-1/2}) \\
Y^+_{\infty} (\lambda) &= S^+ + \mathbf{O} (1).
\end{aligned}
\end{equation*}

Turning to $\tilde{W} (x; \lambda)$, we first observe that $S^- (\lambda)^{-1}$ can easily be identified, 
using the orthogonality of $R^-$; in particular, the $i$-th row of $S^- (\lambda)^{-1}$
is $\frac{1}{\mu_{n+i}^-} (r_i^-)^t$, which is $\mathbf{O} (|\lambda|^{-1/2})$. In 
this way, we see that 
\begin{equation*}
\begin{aligned}
X^- (x;\lambda) - i Y^- (x; \lambda) 
& = R^- + \mathbf{O} (|\lambda|^{-1/2}) -i S^- (\lambda) + \mathbf{O} (1) \\
&= - i S^- (\lambda) \Big{\{} i S^- (\lambda)^{-1} ( R^- + \mathbf{O} (1)) + I \Big{\}} \\
&= - i S^- (\lambda) (I + \mathbf{O} (|\lambda|^{-1/2})),
\end{aligned}
\end{equation*} 
and so 
\begin{equation*}
\begin{aligned}
(X^- (x;\lambda) - i Y^- (x;\lambda))^{-1} 
&= i (I + \mathbf{O} (|\lambda|^{-1/2}))^{-1} S^- (\lambda)^{-1} \\
&= i (I + \mathbf{O} (|\lambda|^{-1/2})) S^- (\lambda)^{-1},  
\end{aligned}
\end{equation*}
by Neumann approximation.
Likewise, 
\begin{equation*}
\begin{aligned}
X^- (x;\lambda) + i Y^- (x;\lambda) 
&= R^- + \mathbf{O} (|\lambda|^{-1/2}) + i S^- (\lambda) + \mathbf{O} (1) \\
&= (iI + (R^- + \mathbf{O} (1)) S^- (\lambda)^{-1} ) S^- (\lambda) \\
&= (iI + \mathbf{O} (|\lambda|^{-1/2})) S^- (\lambda). 
\end{aligned}
\end{equation*}
In this way, we see that 
\begin{equation*}
\begin{aligned}
(X^- (x;\lambda) &+ i Y^- (x;\lambda)) (X^- (x;\lambda) - i Y^- (x;\lambda))^{-1} \\
&= (iI + \mathbf{O} (|\lambda|^{-1/2})) S^- (\lambda) i (I + \mathbf{O} (|\lambda|^{-1/2})) S^- (\lambda)^{-1} \\
&= (iI + \mathbf{O} (|\lambda|^{-1/2})) (iI + \mathbf{O} (|\lambda|^{-1/2})) \\
&= -I + \mathbf{O} (|\lambda|^{-1/2}).
\end{aligned}
\end{equation*}

Proceeding similarly for $\mathbf{X}_{\infty} (\lambda)$, we have 
\begin{equation*}
(X^+_{\infty} (\lambda) + i Y^+_{\infty} (\lambda) (X^+_{\infty} (\lambda) - i Y^+_{\infty} (\lambda))^{-1}
= -I + \mathbf{O} (|\lambda|^{-1/2}),
\end{equation*} 
and so 
\begin{equation*}
\tilde{W} (x; \lambda) = -I + \mathbf{O} (|\lambda|^{-1/2})
\end{equation*}
uniformly in $x$. 
We see that for $\lambda_{\infty}$ sufficiently large the eigenvalues 
of $\tilde{W} (x; - \lambda_{\infty})$ are near $-1$
uniformly for $x \in (-\infty, M]$.

Turning to the behavior of $\tilde{W} (x; \lambda)$ as 
$x$ tends to $+\infty$ (i.e., for $x \ge M$), 
we recall from Section \ref{bound_section} that if $\lambda_{\infty}$
is large enough then $- \lambda_{\infty}$ will not be an 
eigenvalue of $H$. This means the evolving Lagrangian 
subspace $\ell^-$ cannot intersect the space of solutions 
asymptotically decaying as $x \to +\infty$, and so the 
frame $X^- (x;\lambda)$ must be comprised of solutions that 
grow as $x$ tends to $+\infty$. The construction of these 
growing solutions is almost identical to our construction of
the decaying solutions $\Phi_j^-$, and we'll be brief.

In this case, it's convenient to write 
\begin{equation*}
\mathbb{A} (\xi; \lambda) = \mathbb{A}_+ (\lambda) + \mathbb{E}_+ (\xi; \lambda),
\end{equation*} 
where 
\begin{equation*}
\mathbb{A}_+ (\lambda) = 
\begin{pmatrix}
0 & I \\
I - \frac{1}{\lambda} V_+ & 0
\end{pmatrix};
\quad 
\mathbb{E}_+ (\xi; \lambda) = 
\begin{pmatrix}
0 & 0 \\
\frac{1}{\lambda} (V_+ - V(\frac{\xi}{\sqrt{-\lambda}})) & 0
\end{pmatrix}.
\end{equation*}
The eigenvalues of $\mathbb{A}_+ (\lambda)$ can be 
expressed as 
\begin{equation*}
\begin{aligned}
\hat{\mu}_j^+ (\lambda) &= - \sqrt{1 - \frac{\nu^+_{m+1-j}}{\lambda}}
= \frac{1}{\sqrt{-\lambda}} \mu_j^+ \\
\hat{\mu}_{n+j}^+ (\lambda) &= \sqrt{1 - \frac{\nu_j^+}{\lambda}}
= \frac{1}{\sqrt{-\lambda}} \mu_{n+j}^+,
\end{aligned}
\end{equation*} 
for $j = 1,2, \dots, n$ (ordered, as usual, so that $j < k$ implies 
$\hat{\mu}_j^+ \le \hat{\mu}_k^+$). In order to select a solution growing with 
rate $\hat{\mu}_{n+j}^+$ (as $\xi \to +\infty$), we look for solutions 
of the form 
$\Phi (\xi; \lambda) = e^{\hat{\mu}_{n+j}^+ (\lambda) \xi} Z (\xi; \lambda)$,
for which $Z$ satisfies 
\begin{equation*}
Z' = (\mathbb{A}_+ (\lambda) - \hat{\mu}_{n+j}^+ (\lambda) I) Z 
+ \mathbb{E}_+ (\xi; \lambda) Z.
\end{equation*} 

Proceeding as with the frame of solutions that decay as $x \to -\infty$, 
we find that for $M$ sufficiently large (so that asymptotically decaying 
solutions become negligible), we can take as our frame for $\ell^-$ 
\begin{equation*}
\begin{aligned}
X^+ (x; \lambda) &= R^+ + \mathbf{O} (|\lambda|^{-1/2}) \\
Y^+ (x; \lambda) &= \tilde{S}^+ + \mathbf{O} (1),
\end{aligned}
\end{equation*}
where 
\begin{equation*}
{\tilde{S}}^+ = 
\begin{pmatrix}
\mu_{n+1}^+ r_n^+ & \mu_{n+2}^+ r_{n-1}^+ & \dots & \mu_{2n}^+ r_1^+
\end{pmatrix},
\end{equation*}
and the $\mathbf{O} (\cdot)$ terms are uniform for $x \in [M, \infty)$.
Proceeding now almost exactly as we did for the interval $(-\infty, M]$
we find that for $\lambda_{\infty}$ sufficiently large the eigenvalues 
of $\tilde{W} (x; - \lambda_{\infty})$ are near $-1$
uniformly for $x \in [M, \infty)$.

We summarize these considerations in a lemma. 

\begin{lemma} Let $V \in C(\mathbb{R})$ be a real-valued symmetric matrix, 
and suppose (A1) and (A2) hold. Then given any $\epsilon > 0$ there 
exists $\lambda_{\infty} > 0$ sufficiently large so that for all 
$x \in \mathbb{R}$ and for any eigenvalue 
$\omega (x; -\lambda_{\infty})$ of $\tilde{W} (x; -\lambda_{\infty})$
we have 
\begin{equation*}
|\omega (x; -\lambda_{\infty}) + 1| < \epsilon.
\end{equation*}
\end{lemma}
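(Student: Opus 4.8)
The plan is to assemble the lemma from the two uniform estimates established in the discussion above, so the argument is essentially bookkeeping. First I would fix the cutoff $M \gg 0$ once and for all; with $M$ frozen, every $\mathbf{O}(\cdot)$ constant occurring in the preceding construction is a fixed number independent of $\lambda$ --- in particular the constant $K = K(M)$ controlling $\int_{-\infty}^{M} |\mathbb{E}_-(\xi;\lambda)|\,d\xi \le K |\lambda|^{-1/2}$ and the analogous tail constant for $\mathbb{E}_+$ on $[M,\infty)$. The discussion preceding the lemma then yields a constant $C > 0$ and a threshold $\Lambda_0 > 0$ such that $\| \tilde{W}(x;\lambda) + I \| \le C |\lambda|^{-1/2}$ for all $x \in (-\infty, M]$ and all $\lambda \le -\Lambda_0$: this is exactly the estimate $\tilde{W}(x;\lambda) = -I + \mathbf{O}(|\lambda|^{-1/2})$, uniform in $x$, obtained via the rescaling $\xi = \sqrt{-\lambda}\,x$, the Neumann inversions of $X^- \mp i Y^-$, and the identity $(iI)(iI) = -I$. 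Repeating the argument on $[M,\infty)$ with the frame adapted to the growing solutions at $+\infty$ (legitimate because, as in Section \ref{bound_section}, $-\lambda_{\infty}$ is not an eigenvalue of $H$ for $\lambda_{\infty}$ large, so $\mathbf{X}^-$ restricted to $[M,\infty)$ consists of growing solutions) gives the same bound for $x \in [M,\infty)$.

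Next I would glue the two ranges: since the same $M$ appears in both and the two bounds agree at $x = M$, after enlarging $C$ if necessary we obtain $\| \tilde{W}(x;\lambda) + I \| \le C |\lambda|^{-1/2}$ for all $x \in \mathbb{R}$ and all $\lambda \le -\Lambda_0$. For the eigenvalue conclusion I would invoke the elementary fact that if $\omega$ is an eigenvalue of a matrix $A$ then $\omega + 1$ is an eigenvalue of $A + I$, whence $|\omega + 1| \le \| A + I \|$; taking $A = \tilde{W}(x; -\lambda_{\infty})$ gives $|\omega(x; -\lambda_{\infty}) + 1| \le C \lambda_{\infty}^{-1/2}$ for every eigenvalue and every $x \in \mathbb{R}$. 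Given $\epsilon > 0$, it then suffices to take $\lambda_{\infty} > \max\{\Lambda_0, C^{2}/\epsilon^{2}\}$ (and, as noted, large enough that $-\lambda_{\infty}$ avoids $\sigma_{pt}(H)$).

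The only genuine point requiring care is that the $\mathbf{O}(|\lambda|^{-1/2})$ bounds must be uniform in $x$ over the whole line rather than merely locally. This is already built into the construction: on $(-\infty, M]$ the contraction estimate is driven by $\int_{-\infty}^{M} |\mathbb{E}_-(\xi;\lambda)|\,d\xi$, which is uniform in $x$, and on $[M,\infty)$ by the corresponding tail integral over $[M,\infty)$; once $M$ is fixed, both are global in $x$, so no patching difficulty arises and the two estimates combine into the single global bound used above. Everything else is routine.
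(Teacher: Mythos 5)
Your proposal is correct and follows essentially the same route as the paper: the lemma is stated there precisely as a summary of the preceding large-$|\lambda|$ analysis, namely the uniform estimate $\tilde{W}(x;\lambda) = -I + \mathbf{O}(|\lambda|^{-1/2})$ obtained separately on $(-\infty,M]$ and on $[M,\infty)$ (the latter using that $\mathbf{X}^-$ is built from growing modes there since $-\lambda_{\infty}\notin\sigma_{pt}(H)$), glued over a fixed $M$. Your added bookkeeping with $|\omega+1|\le\|\tilde{W}+I\|$ and the explicit choice of $\lambda_{\infty}$ is a fine way to make the final step precise.
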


\begin{remark} We note that it would be insufficient to simply take 
$M = x_{\infty}$ in our argument. This is because our overall argument
is structured in such a way that we choose $\lambda_{\infty}$ first, 
and then choose $x_{\infty}$ sufficiently large, based on this value.
(This if for the bottom shelf argument.) But $\lambda_{\infty}$ must 
be chosen based on $M$, so $M$ should not depend on the value of 
$x_{\infty}$.
\end{remark}

We now make the following claim.

\begin{lemma} \label{left_lemma} 
Let $V \in C(\mathbb{R})$ be a real-valued symmetric matrix, and suppose (A1)
and (A2) hold. Then given any $M > 0$ there exists $\lambda_{\infty} > 0$ 
sufficiently large so that  
\begin{equation*}
\Mas (\ell^-, \ell_{\infty}^+; \Gamma_{\infty}) = 0,
\end{equation*}
for any $x_{\infty} > M$.
\end{lemma}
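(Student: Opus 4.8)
\medskip
\noindent\textbf{Proof proposal.}
The plan is to exploit the uniform large-$|\lambda|$ estimates established just above, together with monotonicity in $\lambda$, so as to reduce the computation of $\Mas(\ell^-,\ell^+_\infty;\Gamma_\infty)$ to the behavior of $\tilde W$ at the two endpoints of $\Gamma_\infty$ only. First I would fix $M$, apply the preceding lemma (the summary of the left-shelf estimates) with $\epsilon=1$ to fix $\lambda_\infty$ depending only on $M$ so that every eigenvalue $\omega$ of $\tilde W(x;-\lambda_\infty)$ satisfies $|\omega+1|<1$ for all $x\in\mathbb R$, noting that this holds uniformly in $x_\infty>M$ because the relevant frame estimates for $\ell^-$ and for $\ell^+_\infty=\ell^+(x_\infty;\cdot)$ are uniform on $[M,\infty)$. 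After the change of variables $x=\ln\!\big(\tfrac{1+\tau}{1-\tau}\big)$, $\Gamma_\infty$ becomes a continuous path over the compact interval $\tau\in[-1,\tau_\infty]$ (with $\tilde W\to\tilde W^-$ at $\tau=-1$), and along it every eigenvalue of $\tilde W$ stays in the open arc $\{e^{i\theta}:\theta\in(2\pi/3,4\pi/3)\}$; in particular $e^{i(\pi+\pi/2)}=-i$ is never an eigenvalue along $\Gamma_\infty$, so I may compute the Maslov index from Definition \ref{dfnDef3.6} with a single partition interval and $\epsilon_1=\pi/2$, obtaining
\begin{equation*}
\Mas(\ell^-,\ell^+_\infty;\Gamma_\infty)=k(-1,\pi/2)-k(\tau_\infty,\pi/2),
\end{equation*}
where $k(\tau,\pi/2)$ is the number (with multiplicity) of eigenvalues of $\tilde W$ at parameter value $\tau$ whose argument lies in $[\pi,3\pi/2)$.

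It then remains to show $k=0$ at both endpoints, i.e.\ that all eigenvalues of $\tilde W(x_\infty;-\lambda_\infty)$ and of $\tilde W^-(-\lambda_\infty)$ have argument strictly less than $\pi$. Here I would use the facts, from the large-$|\lambda|$ analysis, that $\tilde W(x_\infty;\lambda)=-I+\mathbf{O}(|\lambda|^{-1/2})$ and $\tilde W^-(\lambda)=-I+\mathbf{O}(|\lambda|^{-1/2})$ as $\lambda\to-\infty$ (the latter by letting $x\to-\infty$ in the former), uniformly in $x_\infty>M$. Enlarging $\lambda_\infty$ if necessary, for all $\lambda\le-\lambda_\infty$ the eigenvalues of both matrices lie within distance $1$ of $-1$; so, fixing an index $j$ and a continuous branch $\theta_j(\lambda)$ of the argument of a corresponding eigenvalue normalized by $\theta_j(\lambda)\to\pi$ as $\lambda\to-\infty$, this closeness confines $\theta_j(\lambda)$ to $(2\pi/3,4\pi/3)$ for $\lambda\le-\lambda_\infty$ and rules out any winding. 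By Lemma \ref{monotonicity_lemma}, $\theta_j(\lambda)$ is non-increasing for $\tilde W(x_\infty;\cdot)$ and strictly decreasing for $\tilde W^-(\cdot)$, so in either case $\theta_j(\lambda)\le\pi$ there. For $\tilde W^-(\cdot)$ strict monotonicity gives $\theta_j(\lambda)<\pi$; for $\tilde W(x_\infty;\cdot)$ I would observe that $-1\in\sigma(\tilde W(x_\infty;\lambda))$ exactly when $\lambda\in\sigma_{pt}(H)$ (Theorem \ref{intersection_theorem} and the eigenvalue characterization at the start of Section \ref{schrodinger_section}), and since $\sigma_{pt}(H)\subset[-C,\infty)$ with $\lambda_\infty>C$ (Section \ref{bound_section}) the value $-1$ is not attained for $\lambda\le-\lambda_\infty$, so again $\theta_j(\lambda)<\pi$. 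Evaluating at $\lambda=-\lambda_\infty$ yields $\theta_j(-\lambda_\infty)\in(2\pi/3,\pi)$ for every $j$, hence $k=0$ at both endpoints and $\Mas(\ell^-,\ell^+_\infty;\Gamma_\infty)=0$ for every $x_\infty>M$.

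I expect the main difficulty to be the endpoint analysis of the previous paragraph: one must combine the asymptotics $\tilde W\to-I$ with the monotonicity lemma while carefully controlling the \emph{branch} of the eigenvalue argument, so as to exclude the possibility that an eigenvalue has wound (almost) all the way around $S^1$ and returned to a neighborhood of $-1$ from the counterclockwise side; this is precisely where the uniform smallness $|\omega+1|<1$ (for all $x$, and for all $\lambda\le-\lambda_\infty$) is essential. A secondary point is the required uniformity in $x_\infty$: because $\lambda_\infty$ must be chosen before $x_\infty$, one cannot invoke the closeness of $\ell^+(x_\infty;\cdot)$ to $\ell^+_{\mathbf R}(\cdot)$, which holds only for large $x_\infty$; the argument above sidesteps this by using only endpoint data together with the estimates that are uniform on $[M,\infty)$ and the $x_\infty$-independent bound $\sigma_{pt}(H)\subset[-C,\infty)$.
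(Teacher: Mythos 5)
Your proposal is correct and follows essentially the same route as the paper: confine all eigenvalues of $\tilde{W}(x;-\lambda_{\infty})$ to a small arc about $-1$ via the uniform large-$|\lambda|$ estimates, then use monotonicity in $\lambda$ together with the absence of point spectrum below $-\lambda_{\infty}$ (and strict monotonicity for $\tilde{W}^-$) to place the eigenvalues at both endpoints of $\Gamma_{\infty}$ strictly on the clockwise side of $-1$, so that no net crossing can occur. Your final step is, if anything, a slightly more careful formalization of the paper's ``begin and end in $\mathcal{A}_{\epsilon}^+$ without completing a loop'' argument, since you invoke Definition \ref{dfnDef3.6} directly with a single partition interval and $\epsilon_1=\pi/2$.
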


\begin{proof} We begin by observing that by taking $\lambda_{\infty}$ sufficiently
large, we can ensure that for all $x_{\infty} > M$ the eigenvalues of 
$\tilde{W} (x_{\infty}; -\lambda_{\infty})$
are all near $-1$. To make this precise, given any $\epsilon > 0$ we can take 
$\lambda_{\infty}$ sufficiently large so that the eigenvalues of 
$\tilde{W} (x_{\infty}; -\lambda_{\infty})$ are confined  to the arc 
$\mathcal{A}_{\epsilon} = \{e^{i \theta}: |\theta - \pi| < \epsilon\}$. 
Moreover, we know from Lemma \ref{monotonicity_lemma} that as $\lambda$ 
decreases toward $-\lambda_{\infty}$ the eigenvalues of $\tilde{W} (x_{\infty}; \lambda)$
will monotonically rotate in the counterclockwise direction, and so the 
eigenvalues of $\tilde{W} (x_{\infty}; -\lambda_{\infty})$ will in fact be confined to 
the arc $\mathcal{A}_{\epsilon}^+ = \{e^{i \theta}: -\epsilon < \theta - \pi < 0\}$.
(See Figure \ref{Ape}; we emphasize that none of the eigenvalues can cross $-1$, because such a crossing
would correspond with an eigenvalue of $H$, and we have assumed $\lambda_{\infty}$
is large enough so that there are no eigenvalues for $\lambda \le - \lambda_{\infty}$.)
Likewise, by the same monotonicity argument, we see that the eigenvalues of 
$\tilde{W}^- (-\lambda_{\infty})$ are also confined to $\mathcal{A}_{\epsilon}^+$.

\begin{figure}[ht] 
\begin{center}\includegraphics[%
  width=8cm,
  height=8cm]{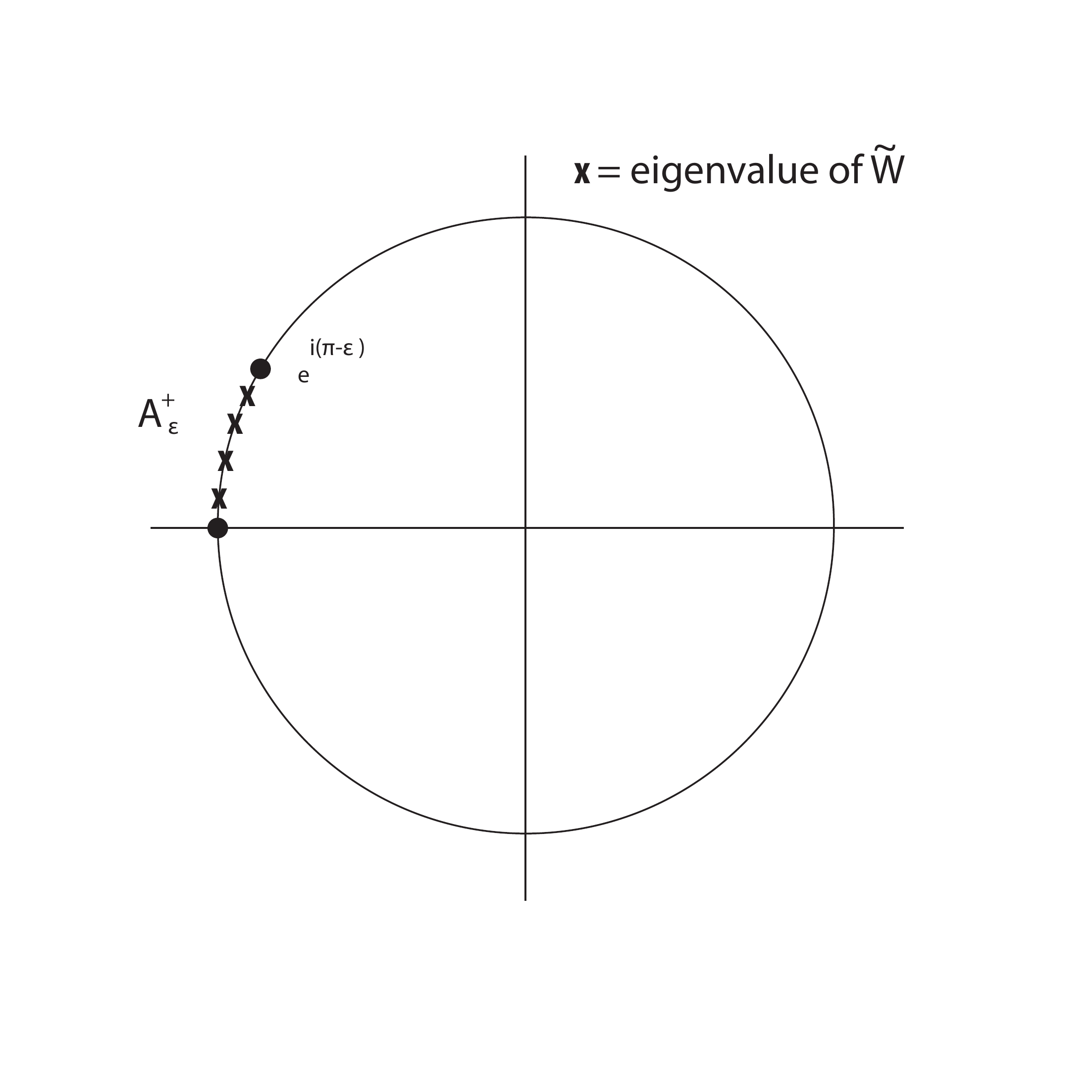}\end{center}
\caption{Eigenvalues confined to $A^+_{\epsilon}$. \label{Ape}}
\end{figure}

Turning now to the flow of eigenvalues as $x$ proceeds from $x_{\infty}$ to $-\infty$
(i.e., along the reverse direction of $\Gamma_{\infty}$), we note by uniformity 
of our large-$|\lambda|$
estimates that we can take $\lambda_{\infty}$ large enough so that the eigenvalues
of $\tilde{W}(x; -\lambda_{\infty})$ are confined to $\mathcal{A}_{\epsilon}$
(not necessarily $\mathcal{A}_{\epsilon}^+$) for all $x \in \mathbb{R}$. 
Combining these observations, we conclude that the eigenvalues of 
$\tilde{W}(x; -\lambda_{\infty})$ must begin and end in $\mathcal{A}_{\epsilon}^+$,
without completing a loop of $S^1$, and consequently the Maslov index along the 
entirety of $\Gamma_{\infty}$ must be 0.
\end{proof}

\subsection{Proof of Theorem \ref{main_theorem}} \label{proof_section}

Let $\Gamma$ denote the contour obtained by proceeding counterclockwise
along the paths $\Gamma_0$, $\Gamma_+$, $\Gamma_{\infty}$, $\Gamma_-$.
By the catenation property of the Maslov index, we have 
\begin{equation*}
\Mas (\ell^-, \ell^+_{\infty}; \Gamma) =
\Mas (\ell^-, \ell^+_{\infty}; \Gamma_0) + \Mas (\ell^-, \ell^+_{\infty}; \Gamma_+)
+ \Mas (\ell^-, \ell^+_{\infty}; \Gamma_{\infty}) + \Mas (\ell^-, \ell^+_{\infty}; \Gamma_-).
\end{equation*}
Moreover, by the homotopy property, and by noting that $\Gamma$ is homotopic to 
an arbitrarily small cycle attached to any point of $\Gamma$, we can conclude 
that $\Mas (\ell^-, \ell^+_{\infty}; \Gamma) = 0$. Since 
$\Mas (\ell^-, \ell^+_{\infty}; \Gamma_{\infty}) = 0$, and 
$\Mas (\ell^-, \ell^+_{\infty}; \Gamma_+) = \Mor (H)$, it follows immediately 
that 
\begin{equation} \label{loop}
\Mas (\ell^-, \ell^+_{\infty}; \Gamma_0) + \Mas (\ell^-, \ell^+_{\infty}; \Gamma_{-}) + \Mor (H)
= 0.
\end{equation} 

We will complete the proof with the following claim.

\begin{claim} Under the assumptions of Theorem \ref{main_theorem},
\begin{equation*}
\Mas (\ell^-, \ell^+_{\infty}; \Gamma_0) + \Mas (\ell^-, \ell^+_{\infty}; \Gamma_{-})
= \Mas(\ell^-, \ell^+_{\mathbf{R}}; \bar{\Gamma}_0).
\end{equation*}
\end{claim}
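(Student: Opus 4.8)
The plan is to combine the closed-loop relation (\ref{loop}), which says
$\Mas(\ell^-,\ell^+_\infty;\Gamma_0)+\Mas(\ell^-,\ell^+_\infty;\Gamma_-)+\Mor(H)=0$, with the statement of Theorem \ref{main_theorem}, which asserts $\Mor(H)=-\Mas(\ell^-,\ell^+_{\mathbf R};\bar\Gamma_0)$; hence the Claim is exactly the content that closes the circle. But to prove the Claim \emph{directly} (rather than circularly), I would argue geometrically: the left-hand side is a Maslov index along a path that first runs $\ell^-(x;0)$ from $x=-\infty$ to $x=x_\infty$ against the fixed reference $\ell^+_\infty=\ell^+(x_\infty;0)$, and then runs $\ell^-_{\mathbf R}(\lambda)$ from $\lambda=-\lambda_\infty$ back to $\lambda=0$ against $\ell^+_\infty$ along the bottom shelf. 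The right-hand side is the Maslov index of $\ell^-(x;0)$ on $\bar\Gamma_0$, i.e.\ $x$ from $-\infty$ to $+\infty$, against the \emph{fixed} reference $\ell^+_{\mathbf R}(0)=\ell^+_{\mathbf R}$. So I need to show these two counts agree.

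First I would reparametrize both sides over compact intervals using the change of variables $x=\ln\frac{1+\tau}{1-\tau}$, so that $\ell^-$ becomes a genuine continuous path on $\tau\in[-1,1]$ (for $\bar\Gamma_0$) or $\tau\in[-1,\tau_\infty]$ (for $\Gamma_0$), and the bottom shelf becomes a path over $\lambda\in[-\lambda_\infty,0]$. Next, I would build a homotopy in the $(\tau,\lambda)$-rectangle $[-1,\tau_\infty]\times[-\lambda_\infty,0]$ (after the $x_\infty\leftrightarrow\tau_\infty$ substitution): the first reference space $\ell^+_\infty$ is \emph{constant}, and by the left-shelf analysis (Lemma \ref{left_lemma}) the Maslov index on $\Gamma_\infty$ vanishes for $\lambda_\infty$ large, so the concatenation $\Gamma_0\cdot\Gamma_+\cdot\Gamma_\infty\cdot\Gamma_-$ bounds a region across which the pair of Lagrangian paths extends continuously; homotopy invariance with fixed endpoints then lets me replace $\Gamma_0+\Gamma_-$ (as a path against $\ell^+_\infty$) by any homotopic path with the same endpoints. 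The natural target is the path that runs $\ell^-(x;0)$ all the way to $x=+\infty$ (i.e.\ $\tau=1$) — which requires knowing the limit $\ell^-_{+\infty}(0)=\lim_{x\to+\infty}\ell^-(x;0)$ exists (stated in Remark \ref{discontinuous}) — against the reference $\ell^+_{\mathbf R}(0)$; here I must also slide the reference from $\ell^+_\infty$ to $\ell^+_{\mathbf R}(0)$, which is legitimate because $\mathbf X^+_\infty(\lambda)$ can be made arbitrarily close to $\mathbf R^+(\lambda)$ by taking $x_\infty$ large, and by Lemma \ref{bottom_shelf_case1} / Lemma \ref{bottom_lemma} there are no crossings on the bottom shelf near $\lambda=0$ to obstruct the deformation.

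The key reduction is therefore: $\Mas(\ell^-,\ell^+_\infty;\Gamma_0)+\Mas(\ell^-,\ell^+_\infty;\Gamma_-)=\Mas(\ell^-,\ell^+_{\mathbf R}(0);\Gamma_0^{\mathrm{ext}})$, where $\Gamma_0^{\mathrm{ext}}$ is $x$ from $-\infty$ to $+\infty$ at $\lambda=0$; and the latter is by definition $\Mas(\ell^-,\ell^+_{\mathbf R};\bar\Gamma_0)$. The deformation argument runs as follows. Consider the homotopy parameter $s\in[0,1]$ interpolating the reference frame from $\mathbf X^+(x_\infty(s);0)$ with $x_\infty(s)\to\infty$ (equivalently, $\tau_\infty(s)\to1$), noting that along the family the endpoint value of $\ell^-$ at the right end moves from $\ell^-(x_\infty;0)$ toward $\ell^-_{+\infty}(0)$, while the reference moves from $\ell^+_\infty$ toward $\ell^+_{\mathbf R}(0)$; because for $\lambda=0$ (bottom shelf at its endpoint) and for $x$ large the matrices $\tilde W$ stay bounded away from $-1$ by Lemma \ref{bottom_lemma} (applied at $\lambda$ slightly negative if $\lambda=0$ is a boundary case, per the \emph{Moreover} clause of Lemma \ref{bottom_shelf_case1}), no crossing is created or destroyed at the moving endpoint during the homotopy, so the fixed-endpoint homotopy invariance (P2) applies after a standard argument that reduces the free-endpoint case to the fixed-endpoint case (absorbing the small end-segment into a null-homotopic piece). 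I expect the main obstacle to be exactly this endpoint bookkeeping: $\ell^-_{+\infty}(\lambda)$ is only known to be \emph{discontinuous in $\lambda$} (Remark \ref{discontinuous}), so the homotopy must be carried out at fixed $\lambda=0$, or on a shrinking neighborhood of it, and one must verify carefully — using the bottom-shelf non-crossing lemma and the monotonicity lemma — that the Maslov contributions from the small corner pieces of the deformed contour vanish, so that nothing leaks out of the count as the right endpoint is pushed to $+\infty$ and the reference is simultaneously relaxed to $\ell^+_{\mathbf R}$.
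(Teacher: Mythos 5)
Your overall strategy coincides with the paper's: compactify via $x = \ln\frac{1+\tau}{1-\tau}$, view the pair $(\Gamma_0,\Gamma_-)$ computed against $\ell^+_{\infty}$ as a small perturbation of $\bar\Gamma_0$ computed against $\ell^+_{\mathbf{R}}$, and transfer the spectral flow by the homotopy argument of \cite{HLS}, with the corner $(x,\lambda)=(-\infty,0)$ as the delicate point. You correctly identify that corner as the main obstacle, but you do not resolve it, and the one concrete thing you assert about it is false: you claim that ``by Lemma \ref{bottom_shelf_case1} / Lemma \ref{bottom_lemma} there are no crossings on the bottom shelf near $\lambda=0$ to obstruct the deformation.'' Lemma \ref{bottom_lemma} only covers $\lambda<\nu_{\min}$, and when $\nu_{\min}=0$ the endpoint $\lambda=0$ of the bottom shelf is excluded; there the asymptotic spaces can intersect, i.e.\ $\kappa:=\dim(\ell^-_{\mathbf{R}}(0)\cap\ell^+_{\mathbf{R}}(0))\ne 0$. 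In the solitary-wave case $V_\pm=0$ (the remark following Lemma \ref{bottom_lemma}) all $n$ eigenvalues of $\tilde{\mathcal{W}}^-(0)$ sit at $-1$, so the corner is a crossing of maximal multiplicity and your ``no crossings'' premise fails exactly where the argument is hardest.

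The missing content is the case split on $\kappa$ and the corner bookkeeping when $\kappa\ne 0$. The paper's resolution: pick a small $\lambda_0>0$ and compare two auxiliary paths, $\Gamma_1$ (the vertical leg $\lambda\in[-\lambda_0,0]$ at $x=-\infty$ followed by $\Gamma_0$) and $\Gamma_2$ (the same vertical leg followed by $\bar\Gamma_0$). Monotonicity (Lemma \ref{monotonicity_lemma}) shows the $\kappa$ eigenvalues of $\tilde{\mathcal{W}}^-(\lambda)$ arrive at $-1$ in the \emph{clockwise} direction as $\lambda\to 0^-$, so by the arrival/departure convention the vertical leg contributes nothing and $\Mas(\Gamma_2)=\Mas(\bar\Gamma_0)$; the ``Moreover'' clause of Lemma \ref{bottom_shelf_case1} kills the remainder of the bottom shelf on $[-\lambda_\infty,-\lambda_0]$, giving $\Mas(\Gamma_1)=\Mas(\Gamma_-)+\Mas(\Gamma_0)$; and $\Mas(\Gamma_1)=\Mas(\Gamma_2)$ by the perturbation/spectral-flow comparison you describe. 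Separately, your appeal to Lemma \ref{bottom_lemma} to control the right-hand (moving) endpoint is misplaced: that lemma concerns $x=-\infty$, whereas at $x\to+\infty$, $\lambda=0$ there generically \emph{is} a crossing (whenever $0\in\sigma_{pt}(H)$, as in Examples 1 and 3); what saves the argument there is that $-1$ occurs with the same multiplicity in $\sigma(\tilde{W}(x_\infty;0))$ and in $\sigma(\tilde{\mathcal{W}}^+(0))$, so the two spectral flows terminate consistently.
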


\begin{proof} First, consider the case 
\begin{equation*}
\kappa = \dim (\ell^-_{\mathbf{R}} (0) \cap \ell^+_{\mathbf{R}} (0)) = 0,
\end{equation*}
where we introduce the notation $\kappa$ for notational convenience.
In this case, we know from Lemma \ref{bottom_shelf_case1} that for 
$x_{\infty}$ sufficiently large we will have
$\Mas (\ell^-, \ell^+_{\infty}; \Gamma_-)  = 0$. It remains to show that 
\begin{equation*}
\Mas (\ell^-, \ell^+_{\infty}; \Gamma_0)
= \Mas(\ell^-, \ell^+_{\mathbf{R}}; \bar{\Gamma}_0).
\end{equation*}

As usual, let $\tilde{W} (x; \lambda)$ denote the unitary matrix 
(\ref{tildeWmain}) (which we recall depends on $x_{\infty}$), 
and let $\tilde{\mathcal{W}} (x; \lambda)$ denote the unitary 
matrix 
\begin{equation}
\begin{aligned}
\tilde{\mathcal{W}} (x; \lambda) 
&= - (X^- (x; \lambda) + i Y^- (x; \lambda)) (X^- (x; \lambda) - i Y^- (x; \lambda))^{-1} \\
&\times (R^+ - i S^+ (\lambda)) 
(R^+ + i S^+ (\lambda))^{-1}.
\end{aligned}
\end{equation}  
I.e., $\tilde{W} (x; \lambda)$ is the unitary matrix used in the
calculation of $\Mas (\ell^-, \ell^+_{\infty}; \Gamma_0)$ and 
$\tilde{\mathcal{W}} (x; \lambda)$ is the unitary matrix used
in the calculation of $\Mas(\ell^-, \ell^+_{\mathbf{R}}; \bar{\Gamma}_0)$. 
Likewise, set 
\begin{equation*}
\begin{aligned}
\tilde{W}^- (\lambda) &= \lim_{x \to -\infty} \tilde{W} (x; \lambda) \\
\tilde{\mathcal{W}}^- (\lambda) &= \lim_{x \to -\infty} \tilde{\mathcal{W}} (x; \lambda), 
\end{aligned}
\end{equation*}
both of which are well defined. (Notice that while the matrix 
$\tilde{\mathcal{W}} (x; \lambda)$ has not previously appeared, the other matrices 
here, including $\tilde{\mathcal{W}}^- (\lambda)$, are the same as 
before.)

By taking $x_{\infty}$ sufficiently large we can ensure that 
the spectrum of $\tilde{W}^- (0)$ is arbitrarily close to the spectrum 
of $\tilde{\mathcal{W}}^- (0)$ in the following sense: given 
any $\epsilon > 0$ we can take $x_{\infty}$ sufficiently large so 
that for any $\omega \in \sigma (\tilde{W}^- (0))$ there exists
$\tilde{\omega} \in \sigma (\tilde{\mathcal{W}}^- (0))$ so 
that $|\omega - \tilde{\omega}| < \epsilon$. 

Turning to the other end of our contours, we first take the case 
$\nu_{\min} > 0$ so that $\lambda = 0$ is not embedded in 
essential spectrum. In this case, $\tilde{W} (x_{\infty};0)$ 
will have $-1$ as an eigenvalue if and only if $\lambda = 0$ 
is an eigenvalue of $H$, and the multiplicity of $-1$ as an 
eigenvalue of $\tilde{W} (x_{\infty};0)$ will correspond with 
the geometric multiplicity of $\lambda = 0$ as an eigenvalue of
$H$. For $\tilde{\mathcal{W}} (x; 0)$ set 
\begin{equation*}
\tilde{\mathcal{W}}^+ (0) = \lim_{x \to \infty} \tilde{\mathcal{W}} (x; 0),
\end{equation*}
which is well defined by our construction in the appendix. 
As with $\tilde{W} (x_{\infty};0)$, $\tilde{\mathcal{W}}^+ (0)$ 
will have $-1$ as an eigenvalue if and only if $\lambda = 0$ 
is an eigenvalue of $H$, and the multiplicity of $-1$ as an 
eigenvalue of $\tilde{\mathcal{W}}^+ (0)$ will correspond with 
the geometric multiplicity of $\lambda = 0$ as an eigenvalue of
$H$. By choosing $x_{\infty}$ sufficiently large, we can ensure 
that the eigenvalues of $\tilde{W} (x_{\infty};0)$ are arbitrarily 
close to the eigenvalues of $\tilde{\mathcal{W}}^+ (0)$. I.e., 
$-1$ repeats as an eigenvalue the same number of times for these
two matrices, and the eigenvalues aside from $-1$ can be made
arbitrarily close. 

We see that the path of matrices $\tilde{W} (x;0)$, as $x$ runs
from $-\infty$ to $x_{\infty}$ can be viewed as a small perturbation 
from the path of matrices $\tilde{\mathcal{W}} (x;0)$, as 
$x$ runs from $-\infty$ to $+\infty$. In order to clarify this, 
we recall that by using the change of variables (\ref{change}) we
can specify our path of Lagrangian subspaces on the compact interval
$[-1, 1]$. Likewise, the interval $(-\infty, x_{\infty}]$ compactifies
to $[-1, (e^{x_{\infty}} - 1)/(e^{x_{\infty}} + 1)]$. For this 
latter interval, we can make the further change of variables 
\begin{equation*}
\xi = \frac{2}{1+r_{\infty}} \tau + \frac{1 - r_{\infty}}{1 + r_{\infty}},
\end{equation*}
where $r_{\infty} = (e^{x_{\infty}} - 1)/(e^{x_{\infty}} + 1)$, so 
that $\tilde{W} (x;0)$ and $\tilde{\mathcal{W}} (x; 0)$ can both be specfied on 
the interval $[-1, 1]$. Finally, we see that 
\begin{equation*}
|\xi - \tau| = (1+\tau) \frac{1-r_{\infty}}{1+r_{\infty}},
\end{equation*}
so by choosing $x_{\infty}$ sufficiently large (and hence $r_{\infty}$
sufficiently close to $1$), we can take the values of $\xi$ and $\tau$
as close as we like. By uniform continuity the eigenvalues of the 
adjusted path will be arbitrarily close to those of the original
path. 
Since the endstates associated with these paths are arbitrarily close, 
and since the eigenvalues of
one path end at $-1$ if and only if the eigenvalues of the other path 
do, the homotopy invariance argument in \cite{HLS} can be employed to show 
that the spectral flow must be the same along each of these paths, 
and this establishes the claim.

In the event that $\nu_{\min} = 0$ so that $\lambda = 0$ is 
embedded in essential spectrum, it may be the case that $\tilde{W} (x_{\infty};0)$
has $-1$ as an eigenvalue even if $\lambda = 0$ is not an eigenvalue.
More generally, the multiplicity of $-1$ as an eigenvalue of 
$\tilde{W} (x_{\infty};0)$ may not correspond with the geometric 
multiplicity of $\lambda = 0$ as an eigenvalue of $H$. Rather, 
in such cases the multiplicity of $-1$ as an eigenvalue of 
$\tilde{W} (x_{\infty};0)$ will correspond with the dimension of the 
intersection of the space of solutions
that are obtained obtained as $\lambda \to 0^-$ limits of solutions
that decay at $-\infty$ and the space of solutions that are obtained 
as $\lambda \to 0^-$ limits of solutions that decay at $+\infty$. 
(Here, we are keeping in mind that as $\lambda \to 0^-$ if a decaying 
solution ceases to decay then there will be a corresponding growing 
solution that ceases to grow.) Once again, $\tilde{\mathcal{W}}^+ (0)$
will have $-1$ as an eigenvalue if and only if $\tilde{W} (x_{\infty};0)$
does, and we will be able to apply the same argument as discussed above
to establish the claim.

We now turn to the case 
\begin{equation*}
\kappa = \dim (\ell^-_{\mathbf{R}} (0) \cap \ell^+_{\mathbf{R}} (0)) \ne 0,
\end{equation*} 
and as with the case $\kappa = 0$ we begin by assuming $\nu_{\min} > 0$.
The matrix $\tilde{\mathcal{W}}^- (0)$ will have $-1$ as an eigenvalue
with multiplicity $\kappa$. By monotonicity in $\lambda$, and Lemma 
\ref{bottom_shelf_case1} we know that for any $\lambda < 0$ the eigenvalues
will have rotated away from $-1$ in the counterclockwise direction. In 
particular, given any $\epsilon > 0$ we can find $\lambda_0 > 0$ sufficiently
small so that the eigenvalues of $\tilde{\mathcal{W}}^- (-\lambda_0)$ are on the arc 
\begin{equation*}
A_{\epsilon}^- = \{e^{i \theta}: \pi < \theta < \pi + \epsilon\},
\end{equation*} 
while no other eigenvalues of $\tilde{\mathcal{W}}^- (0)$ are on 
the arc $A_{\epsilon}^-$. 

Recalling that $\tilde{W}^- (0)$ can be viewed as a small perturbation
of $\tilde{\mathcal{W}}^- (0)$, we see that for $x_{\infty}$ sufficiently
large there will be a cluster of $\kappa$ eigenvalues of $\tilde{W}^- (0)$ near 
$-1$, on an arc $A_{\tilde{\epsilon}}$, where $\tilde{\epsilon}$ can be
made as small as we like by our choice of $x_{\infty}$. Moreover, by monotonicity
in $\lambda$, we can choose $\lambda_0 > 0$ sufficiently small (perhaps smaller
than the previous choice) so that the corresponding eigenvalues of 
$\tilde{W}^- (-\lambda_0)$ are confined to the arc $A_{\tilde{\epsilon}}^-$.
See Figure \ref{eigs_close}.

\begin{figure}[ht] 
\begin{center}\includegraphics[%
  width=8cm,
  height=8cm]{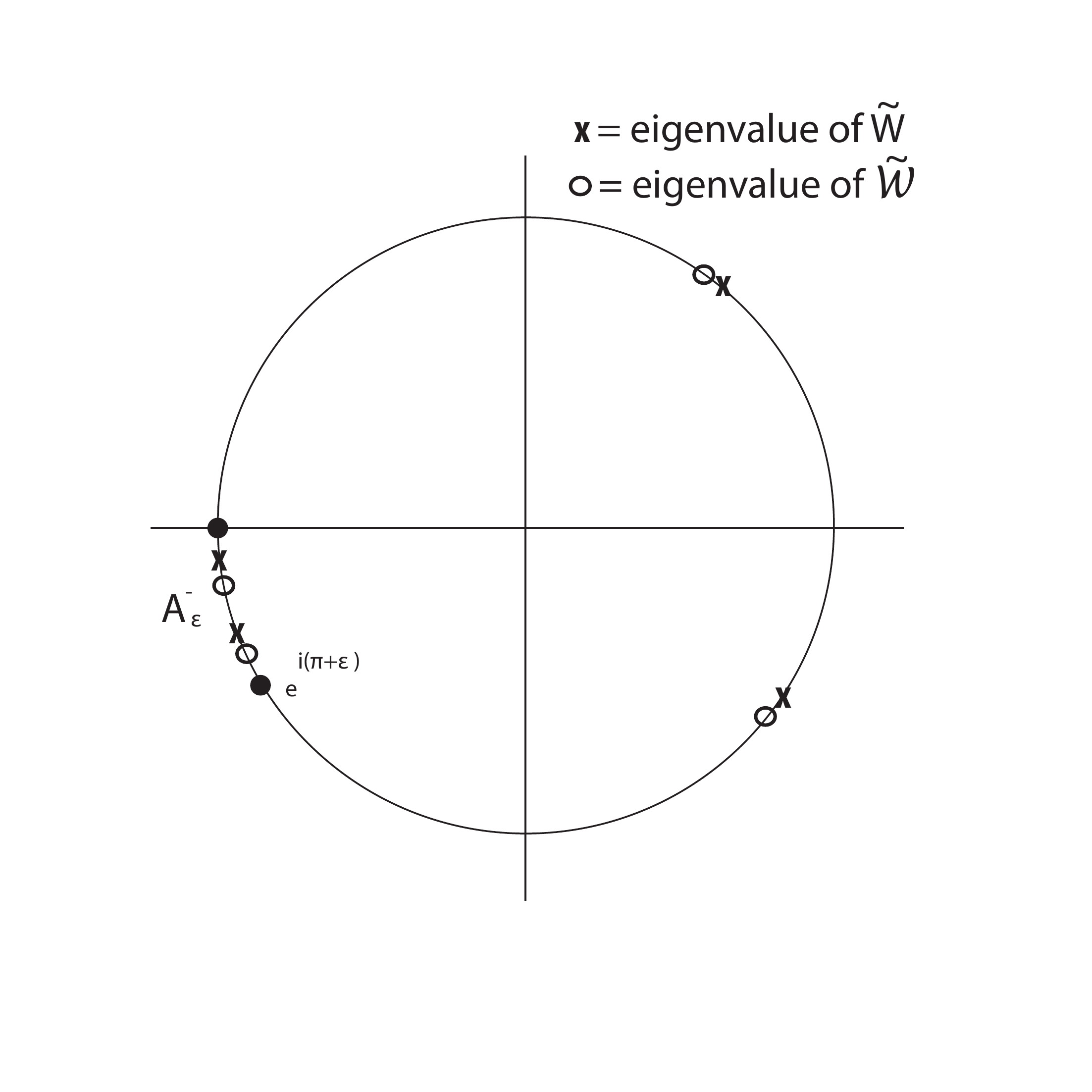}\end{center}
\caption{The eigenvalues of $\tilde{W}^- (-\lambda_0)$ and $\tilde{\mathcal{W}} (-\lambda_0)$. \label{eigs_close}}
\end{figure}

At this point, we can consider the spectral flow of the family of
matrices $\tilde{W} (x;\lambda)$ along the path obtained by first 
fixing $x = -\infty$ and letting $\lambda$ run from $-\lambda_0$
to 0, and then fixing $\lambda = 0$ and letting $x$ run from 
$-\infty$ to $x_{\infty}$. We denote this path $\Gamma_1$.  
Correspondingly, we can consider the spectral flow of the family of 
matrices $\tilde{\mathcal{W}} (x;\lambda)$ along the path obtained 
by first fixing $x = -\infty$ and letting $\lambda$ run from 
$-\lambda_0$ to 0, and then fixing $\lambda = 0$ 
and letting $x$ run from $-\infty$ to $+\infty$. We denote this 
path $\Gamma_2$. 

We are now in almost precisely the same case as when $\kappa = 0$, 
and again we can use an argument similar to the homotopy argument of 
\cite{HLS} to verify that these two spectral flows will give equivalent 
values. This means, of course, that the Maslov indices along 
these paths for the respective pairs $(\ell^-, \ell^+_{\infty})$
(for $\Gamma_1$)
and $(\ell^-, \ell^+_{\mathbf{R}})$ (for $\Gamma_2$) will be equivalent. 
I.e., 
\begin{equation*}
\Mas (\ell^-, \ell^+_{\mathbf{R}}; \Gamma_1)
= \Mas (\ell^-, \ell^+_{\infty}; \Gamma_2). 
\end{equation*}

However, by Lemma \ref{bottom_shelf_case1} we know that the pair 
$(\ell^-, \ell^+_{\mathbf{R}})$ has no intersections along
$\Gamma_-$ (except at $\lambda = 0$). By monotonicity, as 
$\lambda \to 0^-$ the eigenvalues of $\tilde{\mathcal{W}}^- (\lambda)$
will rotate in the clockwise direction, so those rotating to $-1$
will not increment the Maslov index. We conclude that 
\begin{equation*}
\Mas (\ell^-, \ell^+_{\mathbf{R}}; \Gamma_2) = 
\Mas (\ell^-, \ell^+_{\mathbf{R}}; \bar{\Gamma}_0),
\end{equation*}
where we recall from the introduction that $\bar{\Gamma}_0$ is the contour
obtained by fixing $\lambda = 0$ and letting $x$ run from $-\infty$
to $+\infty$.
Likewise, according to Lemma \ref{bottom_shelf_case1} we can take $x_{\infty}$ 
sufficiently large so that $\tilde{W}^- (\lambda)$ does not have $-1$ as
an eigenvalue for any $\lambda \in [-\lambda_{\infty}, - \lambda_0]$. This
implies that 
\begin{equation*}
\Mas (\ell^-, \ell^+_{\infty}; \Gamma_1) = 
\Mas (\ell^-, \ell^+_{\infty}; \Gamma_-) + \Mas (\ell^-, \ell^+_{\infty}; \Gamma_0).
\end{equation*} 
Combining, we find 
\begin{equation*}
\Mas (\ell^-, \ell^+_{\mathbf{R}}; \bar{\Gamma}_0) = 
\Mas (\ell^-, \ell^+_{\infty}; \Gamma_-) + \Mas (\ell^-, \ell^+_{\infty}; \Gamma_0),
\end{equation*} 
which is the claim. The case $\nu_{\min} = 0$ for $\kappa \ne 0$ follows 
similarly as for $\kappa = 0$.
\end{proof}

Upon combining the claim with (\ref{loop}), we obtain Theorem \ref{main_theorem}.

\section{Equations with Constant Convection} \label{convection_section}

For a traveling wave solution $\bar{u} (x - st)$ to the Allen-Cahn equation 
\begin{equation} \label{AC}
u_t + F'(u) = u_{xx},
\end{equation}
it's convenient to switch to a shifted coordinate frame in which the wave 
is a stationary solution $\bar{u} (x)$ for the equation 
\begin{equation} \label{SAC}
u_t - su_x + F'(u) = u_{xx}.
\end{equation}
In this case, linearization about the wave leads to an eigenvalue problem 
\begin{equation} \label{Es}
H_s y := - y'' + s y' + V(x) y = \lambda y,
\end{equation}
where $V(x) = F'' (\bar{u} (x))$. 
Our goal in this section is to show that our development for 
(\ref{main}) can be extended to the case (\ref{Es}) in a 
straightforward manner. For this discussion, which is adapted from 
\cite{BJ1995}, we take any real number $s \ne 0$, and we continue 
to let assumptions (A1) and (A2) hold.

The main issues we need to address are as follows: (1) we need to show
that the point spectrum for $H_s$ is real-valued; (2) we need to show 
that the $n$-dimensional subspaces associated with $H_s$ are Lagrangian; 
and (3) we need to show that the eigenvalues of the associated 
unitary matrix $\tilde{W} (x; \lambda)$ rotate monotonically as 
$\lambda$ increases (or decreases). Once these items have been verified, 
the remainder of our analysis carries over directly to the case
$s \ne 0$.

\subsection{Essential Spectrum} \label{essential_spectrum_subsection}

As for the case $s = 0$ the essential spectrum for $s \ne 0$ can be 
identified from the asymptotic equations 
\begin{equation} \label{Asym_Es}
- y'' + s y' + V_{\pm} y = \lambda y.
\end{equation}
Precisely, the essential spectrum will correspond with 
values of $\lambda$ for which (\ref{Asym_Es}) admits a solution of the form 
$y(x) = e^{ikx} r$ for some constant non-zero vector $r \in \mathbb{C}^n$. Upon 
substitution of this ansatz into (\ref{Es}) we obtain the relations 
\begin{equation*}
(k^2 I + iskI + V_{\pm}) r = \lambda r.
\end{equation*} 
We take a $\mathbb{C}^n$ inner product with $r$ to see that 
\begin{equation*}
\lambda (k) |r|^2 = (V_{\pm} r, r)_{\mathbb{C}^n} + (i s k + k^2) |r|^2, 
\end{equation*}
or equivalently 
\begin{equation*}
\lambda (k) = \frac{(V_{\pm} r, r)_{\mathbb{C}^n}}{|r|^2} + i s k + k^2.
\end{equation*}
We conclude that the essential spectrum is confined on and to the right of 
parabolas opening into the real complex half-plane, described by the 
relations 
\begin{equation*}
\text{Re } \lambda = \frac{(V_{\pm} r, r)_{\mathbb{C}^n}}{|r|^2} + \frac{1}{s^2} (\text{Im }\lambda)^2.
\end{equation*}
For notational convenience, we denote by $\Omega$ this region in $\mathbb{C}$
on or two the right of these parabolas.

\subsection{In $\mathbb{C} \backslash \Omega$ the Point Spectrum of $H_s$ is Real-Valued} \label{real_section}

For any $\lambda \in \mathbb{C} \backslash \Omega$, we can look for ODE solutions with asymptotic 
behavior $y(x) = e^{\mu x} r$. Upon substitution into (\ref{Asym_Es}) we obtain 
the eigenvalue problem 
\begin{equation*}
(-\mu^2 + s \mu + V_{\pm} - \lambda) r = 0.
\end{equation*} 
As in Section \ref{ODEsection} we denote the eigenvalues of $V_{\pm}$
by $\{\nu_j^{\pm}\}_{j=1}^n$, with associated eigenvectors $\{r_j^{\pm}\}_{j=1}^n$.
We see that the possible growth/decay rates $\mu$ will 
satisfy
\begin{equation*}
\mu^2 -s \mu + \lambda = \nu_j^{\pm}
\implies
\mu = \frac{s \pm \sqrt{s^2 - 4(\lambda - \nu_j^{\pm})}}{2}.
\end{equation*}
We label the $2n$ growth/decay rates similarly as in 
Section \ref{ODEsection}
\begin{equation*}
\begin{aligned}
\mu_j^{\pm} (\lambda) &= \frac{s - \sqrt{s^2 - 4(\lambda - \nu_{n+1-j}^{\pm})}}{2} \\
\mu_{n+j}^{\pm} (\lambda) &= \frac{s + \sqrt{s^2 - 4(\lambda - \nu_{j}^{\pm})}}{2},
\end{aligned}
\end{equation*}
for $j = 1, 2, \dots, n$. 

Now, suppose $\lambda \in \mathbb{C} \backslash \Omega$ is is an eigenvalue for 
$H_s$. For this fixed value, we can obtain asymptotic ODE estimates on solutions of 
(\ref{Es}) with precisely the same form as those described in Lemma \ref{ODElemma}
(keeping in mind that the specifications of $\{\mu_j^{\pm}\}_{j=1}^{2n}$ are 
different). 
Letting $\psi (x; \lambda)$ denote the eigenfunction associated with $\lambda$,
we conclude that $\psi(x; \lambda)$ can be expressed both as a linear combination 
of the solutions that decay as $x \to -\infty$ (i.e., those associated with rates
$\{\mu_{n+j}^-\}_{j=1}^n$) and as a linear combination of the solutions that decay 
as $x \to +\infty$ (i.e., those associated with rates $\{\mu_{j}^+\}_{j=1}^n$).   

Keeping in mind that we are in the case $s \ne 0$, we make the change of variable
$\phi(x) = e^{-\frac{s}{2} x} y(x)$, for which a direct calculation yields 
\begin{equation*}
\mathcal{H}_s \phi := e^{-\frac{s}{2} x} H_s e^{\frac{s}{2} x} \phi 
= - \phi'' + (\frac{s^2}{4} + V(x)) \phi = \lambda \phi. 
\end{equation*}
Moreover, if $y(x)$ is a solution of $H_s y = \lambda y$ that decays with 
rate $\mu_{n+j}^- (\lambda)$ as $x \to -\infty$ then the corresponding 
$\phi(x)$ will decay as $x \to -\infty$ with rate 
\begin{equation}
-\frac{s}{2} + \frac{s + \sqrt{s^2 - 4(\lambda - \nu_{j}^{\pm})}}{2}
= 
\frac{1}{2} \sqrt{s^2 - 4(\lambda - \nu_{j}^{\pm})} > 0,
\end{equation}
and likewise if $y(x)$ is a solution of $H_s y = \lambda y$ that decays with 
rate $\mu_{j}^+ (\lambda)$ as $x \to +\infty$ then the corresponding 
$\phi(x)$ will decay as $x \to +\infty$ with rate 
\begin{equation}
-\frac{s}{2} + \frac{s - \sqrt{s^2 - 4(\lambda - \nu_{j}^{\pm})}}{2}
= 
- \frac{1}{2} \sqrt{s^2 - 4(\lambda - \nu_{j}^{\pm})} < 0.
\end{equation}

In this way we see that $\varphi(x; \lambda) = e^{-\frac{s}{2} x} \psi(x; \lambda)$
is an eigenfunction for $\mathcal{H}_s$, associated with the eigenvalue 
$\lambda$. But $\mathcal{H}_s$ is self-adjoint, and so its spectrum is 
confined to $\mathbb{R}$. We conclude that $\lambda \in \mathbb{R}$.

Finally, we observe that although the real value $\lambda = \nu_{\min}$ is 
embedded in the essential spectrum, it is already in $\mathbb{R}$. In this 
way, we conclude that any eigenvalue $\lambda$ of $H_s$ with 
$\text{Re }\lambda \le \nu_{\min}$ must be real-valued.

\subsection{Bound on the Point Spectrum of $H_s$} \label{point_spectrum_subsection}

Suppose $\lambda \in \mathbb{R}$ is an eigenvalue of $H_s$ with associated 
eigenvector $\psi (x; \lambda)$. Taking an $L^2 (\mathbb{R})$ inner product
of $H_s \psi = \lambda \psi$ with $\psi$ we obtain the relation 
\begin{equation*}
\|\psi'\|^2 + s \langle \psi', \psi \rangle + \langle V \psi, \psi \rangle
= \lambda \|\psi\|^2. 
\end{equation*}
We see that 
\begin{equation*}
\begin{aligned}
\lambda \|\psi\|^2 &\ge \|\psi'\|^2 - |s| \|\psi'\| \|\psi\| + \langle V \psi, \psi \rangle \\
&\ge \|\psi'\|^2 - |s| (\frac{\epsilon}{2} \|\psi'\|^2 + \frac{1}{2\epsilon} \|\psi\|^2) 
- \|V\|_{\infty} \|\psi\|^2 \\
&\ge 
- (\frac{1}{2\epsilon} + \|V\|_{\infty}) \|\psi\|^2,
\end{aligned}
\end{equation*}
from which we conclude that $\lambda$ is bounded below. (In this calculation, 
$\epsilon > 0$ has been taken sufficiently small.)

\subsection{The Spaces $\ell^- (x; \lambda)$ and $\ell^-_{\mathbf{R}} (\lambda)$ are Lagrangian}
\label{lagrangian_section}

Since $\sigma_p (H) \subset \mathbb{R}$, we can focus on $\lambda \in \mathbb{R}$, 
in which case the growth/decay rates $\{\mu_j^{\pm}\}_{j=1}^{2n}$ remain ordered 
as $\lambda$ varies. In light of this, the estimates of Lemma \ref{ODElemma} remain 
valid precisely as stated, with our revised definitions of these rates. The Lagrangian
property for $\mathbf{R}^- = {R^- \choose S^-}$ can be verified precisely as before, 
but for $\mathbf{X}^- (x; \lambda) = {X^- (x; \lambda) \choose Y^- (x; \lambda)}$
the calculation changes slightly. For this, take $\lambda \le \nu_{\min}$ and 
temporarily set 
\begin{equation*}
A(x; \lambda) := X^- (x; \lambda)^t Y^- (x; \lambda) - Y^- (x; \lambda)^t X^- (x; \lambda),
\end{equation*}   
and compute (letting prime denote differentiation with respect to $x$)
\begin{equation*}
\begin{aligned}
A'(x; \lambda) &= X^{- \,\prime} (x; \lambda)^t Y^- (x; \lambda) + X^- (x; \lambda)^t Y^{- \,\prime} (x; \lambda) \\
& - Y^{- \,\prime} (x; \lambda)^t X^- (x; \lambda) - Y^- (x; \lambda)^t X^{- \,\prime} (x; \lambda).
\end{aligned}
\end{equation*}
Using the relations 
\begin{equation} \label{more_useful_relations}
X^{- \,\prime} (x; \lambda) = Y^- (x; \lambda); 
\quad Y^{- \,\prime} (x; \lambda) = (V(x) - \lambda I) X^- (x; \lambda) + s Y^- (x; \lambda),
\end{equation}
we find that 
\begin{equation*}
A' (x; \lambda) = s A (x; \lambda).
\end{equation*}
It follows immediately that $e^{-s x} A (x; \lambda) = c$ for some constant $c$. But that rates of decay 
associated with $A(x; \lambda)$ have the form 
\begin{equation*}
\mu_{n+j}^- (\lambda) + \mu_{n+k}^- (\lambda) 
= s + \frac{1}{2} \sqrt{s^2 - 4 (\lambda - \nu_j^-)} + \frac{1}{2} \sqrt{s^2 - 4 (\lambda - \nu_k^-)},
\end{equation*}
from which we see that the exponents associated with $e^{-s x} A (x; \lambda)$ take the form 
\begin{equation*}
\frac{1}{2} \sqrt{s^2 - 4 (\lambda - \nu_j^-)} + \frac{1}{2} \sqrt{s^2 - 4 (\lambda - \nu_k^-)} > 0.
\end{equation*}
It is now clear that by taking $x \to -\infty$ we can conclude that $c = 0$. We conclude that 
$A(x; \lambda) = 0$ for all $x \in \mathbb{R}$, and it follows that $\mathbf{X}^- (x; \lambda)$
is the frame for a Lagrangian subspace (see Proposition 2.1 of \cite{HLS}).

\subsection{Monotoncity} \label{monotonicity_subsection}

In this case, according to Lemma 4.2 in \cite{HLS} monotonicity of $\tilde{W} (x; \lambda)$
(in $\lambda$) will be determined by the matrices 
\begin{equation} \label{submon}
X^- (x; \lambda) \partial_{\lambda} Y^- (x; \lambda) 
- Y^- (x; \lambda) \partial_{\lambda} X^- (x; \lambda) 
\end{equation} 
and 
\begin{equation} \label{submon2}
X^+_{\infty} (\lambda) \partial_{\lambda} Y^+_{\infty} (\lambda) 
- Y^+_{\infty} (\lambda) \partial_{\lambda} X^+_{\infty} (\lambda). 
\end{equation} 
(On the bottom shelf, (\ref{submon}) will be replaced by 
$(R^-)^t \partial_{\lambda} S^- (\lambda) - S^- (\lambda)^t \partial_{\lambda} R^-$.)

Let's temporarily set 
\begin{equation*}
B (x; \lambda) := X^- (x; \lambda) \partial_{\lambda} Y^- (x; \lambda) 
- Y^- (x; \lambda) \partial_{\lambda} X^- (x; \lambda), 
\end{equation*}
and compute (letting prime denote differentiation with respect to $x$)
\begin{equation*}
\begin{aligned}
B' (x; \lambda) &:= X^{- \, \prime} (x; \lambda) \partial_{\lambda} Y^- (x; \lambda) 
+ X^{-} (x; \lambda) \partial_{\lambda} Y^{- \, \prime} (x; \lambda) \\
& - Y^{- \, \prime} (x; \lambda) \partial_{\lambda} X^- (x; \lambda)
- Y^{-} (x; \lambda) \partial_{\lambda} X^{- \, \prime} (x; \lambda) \\
&= - X^- (x; \lambda)^t X^- (x; \lambda) + s B (x; \lambda),   
\end{aligned}
\end{equation*}
where we have used (\ref{more_useful_relations}) to get this final relation. 
Integrating this last expression, we find that 
\begin{equation*}
B(x; \lambda) = - \int_{-\infty}^x e^{s (x - y)} X^- (y; \lambda)^t X^- (y; \lambda) dy,
\end{equation*}
from which we conclude that $B(x; \lambda)$ is negative definite. We can proceed
similarly to verify that (\ref{submon2}) is also positive definite, and the matrix
associated with the bottom shelf can be analyzed as in the case $s = 0$.

\section{Applications} \label{applications_section}

In this section, we discuss three illustrative examples that we hope
will clarify the analysis. For the first two, which are adapted from 
\cite{CDB09}, we will be able to 
carry out explicit calculations for a range of values of $\lambda$. 
The third example, adapted from \cite{HK1}, will employ Theorem \ref{main_theorem}
more directly, in that we will determine that a certain operator
has no negative eigenvalues by computing only the principal 
Maslov index. 

\subsection{Example 1.} \label{example1_section}

We consider the Allen-Cahn equation 
\begin{equation*}
u_t = u_{xx} - u + u^2,
\end{equation*}
which is known to have a pulse-type stationary solution
\begin{equation*}
\bar{u} (x) = \frac{3}{2} \sech^2 (\frac{x}{2}).
\end{equation*}
(See \cite{CDB09}.) Linearizing about $\bar{u} (x)$ 
we obtain the eigenvalue problem
\begin{equation*}
-y'' + (1 - 2 \bar{u} (x)) y = \lambda y, 
\end{equation*}
which has the form (\ref{main}) with $n=1$ 
and $V(x) = 1-2\bar{u}(x)$ (for which (A1)-(A2)
are clearly satisfied). Setting $\Phi = {y \choose y'}$, 
we can express this equation as a first order system
$\Phi' = \mathbb{A} (x;\lambda) \Phi$, with 
\begin{equation} \label{esystem1}
\mathbb{A} (x;\lambda) =
\begin{pmatrix}
0 & 1 \\
1-2\bar{u}(x)-\lambda & 0
\end{pmatrix}; \quad
\mathbb{A}_{\pm} =  
\begin{pmatrix}
0 & 1 \\
1-\lambda & 0
\end{pmatrix}. 
\end{equation}

As observed in \cite{CDB09}, this equation can be solved
exactly for all $x \in \mathbb{R}$ and $\lambda < 1$
(in this case $\sigma_{ess} (H) \subset [1,\infty)$). 
In particular, if we set $s = \frac{x}{2}$, 
$\gamma = 2\sqrt{1-\lambda}$, and 
\begin{equation*}
H^{\pm} (s,\lambda) = \mp a_0 + a_1 \tanh s \mp \tanh^2 s + \tanh^3 s, 
\end{equation*}
with 
\begin{equation*}
a_0 = \frac{\gamma}{15} (4-\gamma^2); \quad 
a_1 = \frac{1}{5} (2\gamma^2-3); \quad
a_2 = -\gamma, 
\end{equation*}
then (\ref{esystem1}) has (up to multiplication by a constant) exactly one solution that decays as $x \to -\infty$,
\begin{equation*}
\Phi^- (x;\lambda) = e^{\gamma s}
\begin{pmatrix}
H^- (s, \lambda) \\
\frac{1}{2} (H_s^- (s,\lambda) + \gamma H^- (s,\lambda))
\end{pmatrix},
\end{equation*}
and 
exactly one solution that decays as $x \to +\infty$,
\begin{equation*}
\Phi^+ (x;\lambda) = e^{-\gamma s}
\begin{pmatrix}
H^+ (s, \lambda) \\
\frac{1}{2} (H_s^+ (s,\lambda) - \gamma H^+ (s,\lambda))
\end{pmatrix}.
\end{equation*} 

The target space can be obtained either from $\Phi^+ (x;\lambda)$
(by taking $x \to \infty$) or by working with $\mathbb{A}_+ (\lambda)$
directly (as discussed during our analysis), and in either case we
find that a frame for the target space is 
$\mathbf{R}^+ = {R^+ \choose S^+} = {1 \choose - \sqrt{1-\lambda}}$. 
Computing directly, we see that 
\begin{equation*}
(R^+ - iS^+ (\lambda)) (R^+ + i S^+ (\lambda))^{-1} 
= \frac{1+i\sqrt{1-\lambda}}{1-i\sqrt{1-\lambda}}.
\end{equation*}
Likewise, the evolving frame in this case can be taken to be
\begin{equation*}
\mathbf{X}^- (x; \lambda) = {X^- (x; \lambda) \choose Y^- (x, \lambda)}
= 
\begin{pmatrix}
H^- (s, \lambda) \\
\frac{1}{2} (H_s^- (s,\lambda) + \gamma H^- (s,\lambda))
\end{pmatrix}.
\end{equation*}
We set 
\begin{equation*}
\tilde{\mathcal{W}} (x; \lambda) = (X^- (x; \lambda) + i Y^- (x; \lambda)) (X^- (x; \lambda) - i Y^- (x; \lambda))^{-1}
(R^+ - i S^+ (\lambda)) (R^+ + i S^+ (\lambda))^{-1},
\end{equation*}
which in this case we can compute directly. The results of such a 
calculation, carried out in MATLAB, are depicted in 
Figure \ref{example1_figure}. 

\begin{remark} For the Maslov Box, we should properly use $\tilde{W} (x; \lambda)$
as defined in (\ref{tildeWmain}) for some sufficiently large $x_{\infty}$, but for
the purpose of graphical illustration (see Figure \ref{example1_figure}) there 
is essentially no difference between working with $\tilde{\mathcal{W}} (x; \lambda)$
and working with $\tilde{W} (x;\lambda)$ defined with $x_{\infty} = 3$.
\end{remark} 

Referring Figure \ref{example1_figure}, the curves comprise 
$x$-$\lambda$ pairs for which $\tilde{\mathcal{W}} (x; \lambda)$ has
$-1$ as an eigenvalue. The eigenvalues in this case are 
known to be $-\frac{5}{4}$, $0$, and $\frac{3}{4}$, and we
see that these are the locations of crossings along the top 
shelf, which for plotting purposes we've indicated at 
$x = 3$ for this example. We note particularly that the 
Principal Maslov Index is -1, because the path 
$\Gamma_0$ is only crossed once (the middle curve approaches
$\Gamma_0$ asymptotically, but this does not increment the
Maslov index).  

\begin{figure}[ht] 
\begin{center}\includegraphics[%
  width=10cm,
  height=8cm]{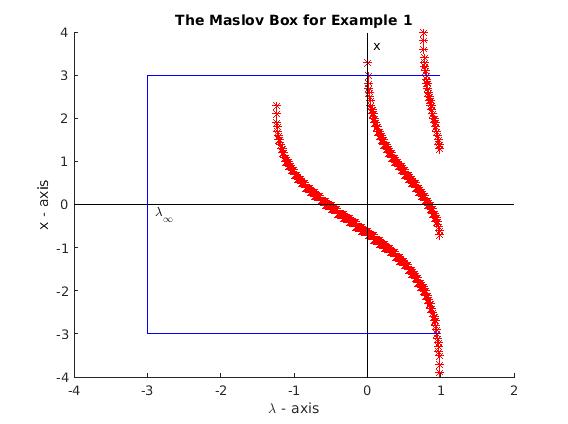}\end{center}
\caption{Figure for Example 1. \label{example1_figure}}
\end{figure}

\subsection{Example 2.} \label{example2_section}

We consider the Allen-Cahn system 
\begin{equation} \label{AC_sys}
\begin{aligned}
u_t &= u_{xx} - 4u + 6u^2 - c(u-v) \\
v_t &= v_{xx} - 4v + 6v^2 + c(u-v),
\end{aligned}
\end{equation}
where $c > - 2$, with also $c \ne 0$. System (\ref{AC_sys})
is known to have a stationary solution 
\begin{equation*}
\begin{aligned}
\bar{u} (x) &= \sech^2 x \\
\bar{v} (x) &= \sech^2 x
\end{aligned}
\end{equation*}
(see \cite{CDB09}). Linearizing about this vector solution, 
we obtain the eigenvalue system 
\begin{equation} \label{eg2E}
\begin{aligned}
- \phi'' + (4 - 12\bar{u} (x) + c) \phi - c \psi &= \lambda \phi \\
- \psi'' - c \phi + (4 - 12 \bar{v} (x) + c) \psi &= \lambda \psi,
\end{aligned}
\end{equation}
which can be expressed in form (\ref{main}) with 
$y = {\phi \choose \psi}$ and 
\begin{equation*}
V (x) = 
\begin{pmatrix}
(4 - 12\bar{u} (x) + c) & -c \\
-c & (4 - 12 \bar{v} + c)
\end{pmatrix}.
\end{equation*}
Following \cite{CDB09} we can solve this system explicity in terms
of functions 
\begin{equation*}
\begin{aligned}
w^- (x; \kappa) &= e^{\sqrt{\kappa}x} \Big(a_0 + a_1 \tanh x + a_2 \tanh^2 x + \tanh^3 x \Big) \\
w^+ (x; \kappa) &= e^{-\sqrt{\kappa}x} \Big(-a_0 + a_1 \tanh x - a_2 \tanh^2 x + \tanh^3 x \Big),
\end{aligned}
\end{equation*}
where 
\begin{equation*}
a_0 = \frac{\kappa}{15} (4 - \kappa); \quad a_1 = \frac{1}{5} (2 \kappa - 3); \quad a_2 = - \sqrt{\kappa},
\end{equation*}
and the values of $\kappa$ will be specified below. 

We can now construct a basis for solutions decaying as $x \to -\infty$ as 
\begin{equation*}
\mathbf{p}_3^- (x; \lambda) = 
\begin{pmatrix}
w^- (x; -\lambda + 4) \\
w^- (x; -\lambda + 4)
\end{pmatrix};
\quad
\mathbf{p}_4^- (x; \lambda) = 
\begin{pmatrix}
- w^- (x; -\lambda + 4 + 2c) \\
w^- (x; -\lambda + 4 + 2c)
\end{pmatrix},
\end{equation*}
and a basis for solutions decaying as $x \to +\infty$ as 
\begin{equation*}
\mathbf{p}_1^+ (x; \lambda) = 
\begin{pmatrix}
w^+ (x; -\lambda + 4) \\
w^+ (x; -\lambda + 4)
\end{pmatrix};
\quad
\mathbf{p}_2^+ (x; \lambda) = 
\begin{pmatrix}
- w^+ (x; -\lambda + 4 + 2c) \\
w^+ (x; -\lambda + 4 + 2c)
\end{pmatrix}.
\end{equation*}

These considerations allow us to construct 
\begin{equation*}
\begin{aligned}
X^- (x; \lambda) &= 
\begin{pmatrix}
w^- (x; -\lambda + 4) & - w^- (x; -\lambda + 4 + 2c) \\
w^- (x; -\lambda + 4) & + w^- (x; -\lambda + 4 + 2c)
\end{pmatrix}; \\
X^+ (x; \lambda) &= 
\begin{pmatrix}
w^+ (x; -\lambda + 4) & - w^+ (x; -\lambda + 4 + 2c) \\
w^+ (x; -\lambda + 4) & + w^+ (x; -\lambda + 4 + 2c)
\end{pmatrix},
\end{aligned}
\end{equation*}
with then $Y^- (x; \lambda) = X^-_x (x;\lambda)$ and 
$Y^+ (x; \lambda) = X^+_x (x; \lambda)$. 

In order to construct the target space, we write (\ref{eg2E})
as a first-order system by setting $\Phi_1 = \phi$, 
$\Phi_2 = \psi$, $\Phi_3 = \phi'$, and $\Phi_4 = \psi'$.
This allows us to write 
\begin{equation*}
\mathbf{\Phi}' = \mathbb{A} (x; \lambda) \mathbf{\Phi}; 
\quad
\mathbb{A} (x; \lambda) = 
\begin{pmatrix}
0 & 0 & 1 & 0 \\
0 & 0 & 0 & 1 \\
- f(x;\lambda) & -c & 0 & 0 \\
-c & -f(x;\lambda) & 0 & 0
\end{pmatrix},
\end{equation*}
where $f(x; \lambda) = \lambda - 4 - c + 12 \bar{u}$. We set
\begin{equation*}
\mathbb{A}_+ (\lambda) := \lim_{x \to + \infty} \mathbb{A} (x; \lambda)
= \begin{pmatrix}
0 & 0 & 1 & 0 \\
0 & 0 & 0 & 1 \\
- \lambda + 4 + c & -c & 0 & 0 \\
-c & -\lambda + 4 + c & 0 & 0
\end{pmatrix}.
\end{equation*}

If we follow our usual ordering scheme for indices then for $-2 < c < 0$ we 
have $\nu_1^+ = 4 + 2c$ and $\nu_2^+ = 4$, with corresponding 
eigenvectors $r_1^+ = {1 \choose -1}$ and $r_2^+ = {1 \choose 1}$. 
Accordingly, we have $\mu_1^+ (\lambda) = - \sqrt{-\lambda + 4}$,
$\mu_2^+ (\lambda) = -\sqrt{-\lambda + 4 + 2c}$, 
$\mu_3^+ (\lambda) = \sqrt{-\lambda + 4 + 2c}$, 
and $\mu_4^+ (\lambda) = \sqrt{-\lambda + 4}$. We 
conclude that a frame for $\ell^+_{\mathbf{R}} (\lambda)$ 
is $\mathbf{R}^+ = {R^+ \choose S^+ (\lambda)}$, where 
\begin{equation*}
R^+ = 
\begin{pmatrix}
1 & 1 \\
1 & -1
\end{pmatrix}; \quad
S^+ (\lambda) = 
\begin{pmatrix}
\mu_1^+ (\lambda) & \mu_2^+ (\lambda) \\
\mu_1^+ (\lambda) & -\mu_2^+ (\lambda)
\end{pmatrix}.
\end{equation*}

The resulting spectral curves are plotted in Figure
\ref{example2_figure} for $c = -1$. In this case, it 
is known that $H$ has exactly six eigenvalues: 
$-7$, $-5$, $-2$, $0$, $1$ and $3$ (the eigenvalues 
$1$ and $3$ are omitted from our window). We see that 
the three crossings along the line $\lambda = 0$ correspond
with the count of three negative eigenvalues.     

\begin{figure}[ht] 
\begin{center}\includegraphics[%
  width=12cm,
  height=8cm]{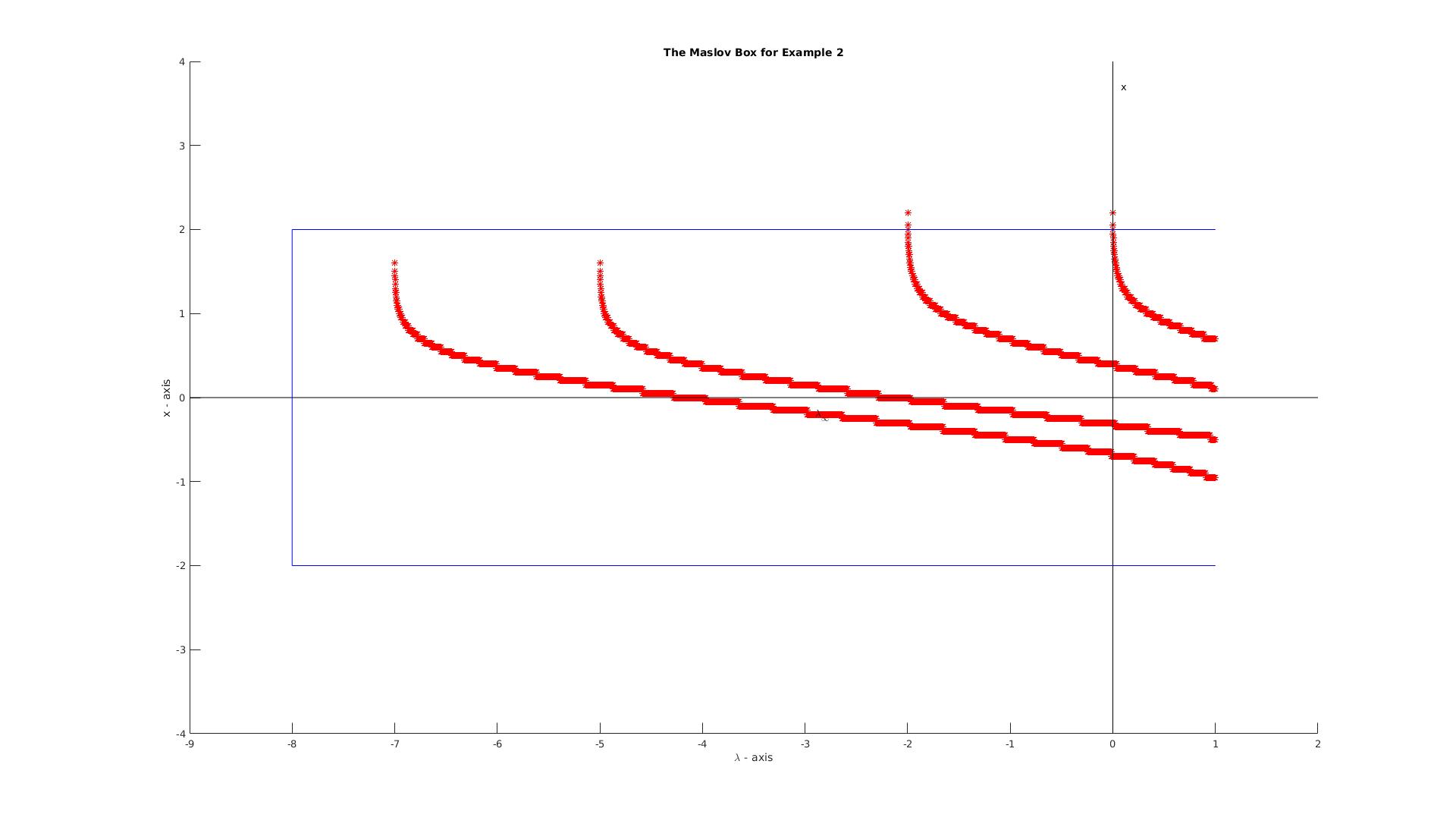}\end{center}
\caption{Figure for Example 2. \label{example2_figure}}
\end{figure}

\subsection{Example 3.} \label{example3_section}

Consider the Allen-Cahn system 
\begin{equation} \label{allencahn}
u_t  = u_{xx} - D_u F(u), 
\end{equation}
where 
\begin{equation*}
F(u_1, u_2) = u_1^2 u_2^2 + u_1^2 (1 - u_1 - u_2)^2 + u_2^2 (1 - u_1 - u_2)^2,
\end{equation*}
which is adapted from p. 39 of \cite{HK1}. In this setting, 
stationary solutions $\bar{u} (x)$ satisfying endstate
conditions 
\begin{equation*}
\lim_{x \to \pm \infty} \bar{u} (x) = u_{\pm}, 
\end{equation*}
for $u_- \ne u_+$ are called {\it transition waves}. A 
transition wave solution for (\ref{allencahn}) is depicted 
in Figure \ref{transitionfront}. In this case, we have
$u_1^- = 1$, $u_2^- = 0$, $u_1^+ = 0$, and $u_2^+ = 1$.

\begin{figure} \label{transitionfront}
\begin{center}\includegraphics[%
  width=12cm,
  height=8cm]{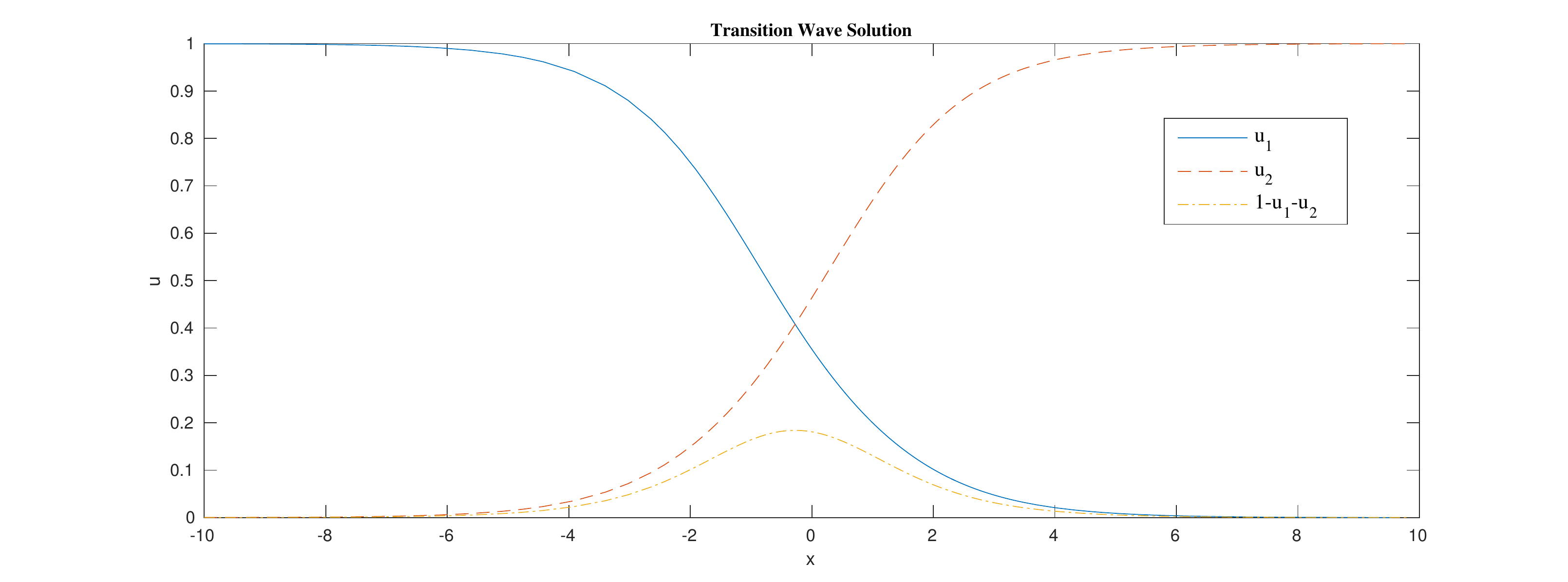}\end{center}
\caption{Transition front solution for a ternary Cahn-Hilliard system.}
\end{figure}

Upon linearization of (\ref{allencahn}) about $\bar{u} (x)$, we obtain 
the eigenvalue problem 
\begin{equation} \label{allencahnE}
- \phi'' + V (x) \phi = \lambda \phi, 
\end{equation}
where $V (x) := D_u^2 F(\bar{u})$ denotes the usual Hessian matrix. In 
this case, 
\begin{equation*}
V_- = 
\begin{pmatrix}
2 & 2 \\
2 & 4
\end{pmatrix};
\quad
V_+ = 
\begin{pmatrix}
4 & 2 \\
2 & 2
\end{pmatrix}.
\end{equation*}
Using our usual labeling scheme, we have $\nu_1^+ = 3 - \sqrt{5}$
and $\nu_2^+ = 3 + \sqrt{5}$, with respective eigenvectors 
\begin{equation*}
r_1^+ = 
\begin{pmatrix}
1 \\ -\frac{1+\sqrt{5}}{2} \\
\end{pmatrix} 
\quad
r_2^+ = 
\begin{pmatrix}
1 \\ -\frac{1-\sqrt{5}}{2}
\end{pmatrix}. 
\end{equation*} 
The corresponding values $\{\mu_j^+\}_{j=1}^4$ are 
$\mu_1^+ = - \sqrt{\nu_2^+ - \lambda}$, $\mu_2^+ = - \sqrt{\nu_1^+ -\lambda}$,
$\mu_3^+ = \sqrt{\nu_1^+ - \lambda}$, $\mu_4^+ = \sqrt{\nu_2^+ -\lambda}$.

For the target space $\ell^+_{\mathbf{R}}$ we use the frame 
\begin{equation*}
\mathbf{R}^+ (\lambda) = 
\begin{pmatrix}
R^+ \\ S^+ (\lambda)
\end{pmatrix}
= 
\begin{pmatrix}
r_2^+ & r_1^+ \\ 
\mu_1^+ r_2^+ & \mu_2^+ r_1^+
\end{pmatrix}.
\end{equation*}

For the evolving Lagrangian subspace $\ell^- (x; \lambda)$ we need a basis 
for the two-dimensional space of solutions that decay as $x \to -\infty$.
Generally, we construct this basis from the solutions 
\begin{equation*}
\mathbf{p}_{2+j}^- (x; \lambda) =  
e^{\mu_{2+j}^- (\lambda) x}
(\scripty{r}_{\,2+j}^{\,-} 
+ \mathbf{E}_{2+j}^-); \quad j = 1, 2, 
\end{equation*}
from Lemma \ref{ODElemma}, but computationally it is easier to 
note that for $\lambda = 0$, $\bar{u}_x$ is a solution of 
(\ref{main}) that decays as $x \to -\infty$. In \cite{HK1}
the authors check that $\bar{u}_x (x)$ decays at the slower
rate (i.e., the rate of $\mathbf{p}_3^-$), so we can take 
as our frame 
\begin{equation*}
\mathbf{X}^- (x; 0) = 
\begin{pmatrix}
p_4^- (x; 0) & \bar{u}_x (x) \\
p_4^{-\,\prime} (x; 0) & \bar{u}_{xx} (x)
\end{pmatrix},
\end{equation*} 
which we scale to 
\begin{equation*}
\mathbf{X}^- (x; 0) = 
\begin{pmatrix}
e^{-\mu_4^- (\lambda) x} p_4^- (x; 0) & e^{-\mu_3^- (\lambda) x} \bar{u}_x (x) \\
e^{-\mu_4^- (\lambda) x} p_4^{-\,\prime} (x; 0) & e^{-\mu_3^- (\lambda) x} \bar{u}_{xx} (x)
\end{pmatrix}.
\end{equation*}
The advantage of this is that $\bar{u} (x)$ is already known, and the 
faster-decaying solution $\mathbf{p}_4^- (x; 0)$ can be generated 
numerically in a straightforward way (see \cite{HK1}). 

In practice, we compute $\tilde{W} (x; 0)$ for $x$ running from $-10$
to $10$, and find that one of its eigenvalues remains confined to 
the semicircle with positive real part and that the other rotates 
monotonically clockwise, nearing $-1$ as $x$ approaches $10$. In 
this case, we know that $\lambda = 0$ is an eigenvalue, from which 
we can conclude that (at least) one of the eigenvalues will approach 
$-1$ as $x \to +\infty$. We view this as strong numerical evidence
(though certainly not numerical proof) that $\lambda = 0$ is a simple
eigenvalue of $H$, and that there are no negative eigenvalues of 
$H$.

\section*{Appendix}

In this short appendix, we construct the asymptotic Lagrangian path 
\begin{equation*}
\ell^-_{+\infty} (\lambda) = \lim_{x \to + \infty} \ell^- (x; \lambda),
\end{equation*}
and show that it is not generally continuous in $\lambda$. For a related 
discussion from a different point of view, we refer to Lemma 3.7 of 
\cite{AGJ}.

As a start, we recall that one choice of frame for $\ell^- (x;\lambda)$
is 
\begin{equation*}
\mathbf{X}^- (x; \lambda) 
= \begin{pmatrix}
\mathbf{p}^-_{n+1} (x; \lambda) & \mathbf{p}^-_{n+2} (x; \lambda) & \dots
& \mathbf{p}^-_{2n} (x; \lambda)
\end{pmatrix},
\end{equation*}
where we have from Lemma \ref{ODElemma}
\begin{equation*}
\mathbf{p}^-_{n+j} (x; \lambda) = e^{\mu_{n+j}^- (\lambda) x}
(\scripty{r}_{\,n+j}^{\,-} 
+ \mathbf{E}_{n+j}^-); \quad j = 1, 2, \dots, n. 
\end{equation*}
Each of the $\mathbf{p}^-_{n+j}$ can be expressed as a linear combination of 
the basis of solutions $\{\mathbf{p}_k^+\}_{k=1}^{2n}$, where we recall from 
Lemma \ref{ODElemma} that the solutions $\{\mathbf{p}_k^+\}_{k=1}^{n}$ decay
as $x \to +\infty$, while the solutions $\{\mathbf{p}_k^+\}_{k=n+1}^{2n}$
grow as $x \to +\infty$. I.e., for each $j = 1,2,\dots,n$, we can write 
\begin{equation*}
\mathbf{p}^-_{n+j} (x; \lambda)
= \sum_{k=1}^{2n} c_{j k} (\lambda) \mathbf{p}_k^+ (x;\lambda),
\end{equation*}
and so the collection of vector functions on the right-hand side
provides an alternative way to express the same frame $\mathbf{X}^- (x;\lambda)$.
We note that if $\lambda = \nu_{\min}$ then the modes
$\{\mathbf{p}_k^+ (x;\lambda)\}_{k=n+1}^{2n}$ cannot be obtained as 
$\lambda \to 0^-$ limits of solutions that grow as $x \to +\infty$, 
because these will coalesce with solutions obtained as $\lambda \to 0^-$ 
limits of solutions that decay as $x \to +\infty$. However, for such 
values of $\lambda$ we can still find $2n$ linearly independent solutions
of (\ref{first_order}), and the $\{\mathbf{p}_k^+ (x;\lambda)\}_{k=n+1}^{2n}$
correspond with the $n$ solutions not obtained as $\lambda \to 0^-$ 
limits of solutions that decay as $x \to +\infty$. For a direct approach 
toward defining such solutions, readers are referred to \cite{H}.

Fix $\lambda \in [-\lambda_{\infty}, \nu_{\min}]$, and suppose the fastest
growth mode $\mathbf{p}^+_{2n} (x; \lambda)$ appears in the expansion of 
at least one of the $\mathbf{p}^-_{n+j}$ (i.e., the coefficient associated with
this mode is non-zero). (There may be additional modes that grow at the same
rate $\mu_{2n}^+$, but they will have different, and linearly independent, 
associated eigenvectors $\scripty{r}_{\,n+j}^{\,-}$, allowing us to distinguish
them from $\mathbf{p}^+_{2n} (x; \lambda)$.) By taking appropriate linear 
combinations, we can identify a new frame for $\ell^- (x; \lambda)$ for which 
$\mathbf{p}_{2n}^+$ only appears in one column. If $\mathbf{p}^+_{2n} (x; \lambda)$
does not appear in the sum for any $\mathbf{p}^-_{n+j}$ we can start with 
$\mathbf{p}_{2n-1}^+$ and proceed similarly, continuing until we get to the first
mode that appears. Since the $\{\mathbf{p}^-_{n+j}\}_{j=1}^n$ form a basis
for an $n$-dimensional space, we will be able to distinguish $n$ modes in this
way. At the end of this process, we will have created a new frame for $\mathbf{X}^- (x; \lambda)$
with columns $\{\tilde{\mathbf{p}}^-_{j}\}_{j=1}^n$, where 
\begin{equation*}
\tilde{\mathbf{p}}_{j}^- (x; \lambda) = 
e^{\mu_{k(j)}^+ x} \Big(s_{k(j)}^+ + \tilde{\mathbf{E}}^+_{k(j)} (x; \lambda) \Big),
\end{equation*}  
for some appropriate map $j \mapsto k(j)$.
If the rate $\mu_{k (j)}^+$ is distinct as an eigenvalue of $\mathbb{A}_+ (\lambda)$
then we will have $s_{k(j)}^+ = \scripty{r}_{\,k(j)}^{\,+}$, but if $\mu_{k (j)}^+$
is not distinct then $s_{k(j)}^+$ will generally be a linear combination of 
eigenvectors of $\mathbb{A}_+ (\lambda)$ (and so, of course, still an eigenvector
of $\mathbb{A}_+ (\lambda)$). This process may also introduce an expansion coefficient
in front of $s_{k(j)}^+$, but this can be factored out in the specification of
the frame. 

As usual, we can view the exponential scalings $e^{\mu^+_{k(j)} x}$ as expansion coefficients, 
and take as our frame for $\ell^- (x;\lambda)$ the $2n \times n$ matrix with columns 
$s_{k(j)}^+ + \tilde{\mathbf{E}}^+_{k(j)} (x; \lambda)$. Taking now the limit 
$x \to \infty$ we see that we obtain the asymptotic frame 
\begin{equation}
\mathbf{X}^-_{+ \infty} (\lambda) 
= \begin{pmatrix}
s_{k(1)}^+ & s_{k(2)}^+ & \dots & s_{k(n)}^+
\end{pmatrix}.
\end{equation}
We can associate $\ell^-_{+\infty} (\lambda)$ as the Lagrangian subspace with 
this frame, verifying that this Lagrangian subspace is well-defined.

Last, we verify our comment that $\ell^-_{+\infty} (\lambda)$ is not generally 
continuous as a function of $\lambda$. To see this, we begin by noting that 
if $\lambda_0 \in [-\lambda_{\infty}, \nu_{\min}]$ is not an eigenvalue of 
$H$ then the leading modes selected in our process must all be growth modes, 
and we obtain $\mathbf{X}^-_{+ \infty} (\lambda_0) = \mathbf{R}^+ (\lambda_0)$, 
in agreement with Lemma 3.7 in \cite{AGJ}. Suppose, however, that 
$\lambda_0 \in [-\lambda_{\infty},\nu_{\min})$
is an eigenvalue of $H$, and for simplicity assume $\lambda_0$ has geometric
multiplicity 1. Away from essential spectrum, $\lambda_0$ will be isolated, and
so we know that any $\lambda$ sufficiently close to $\lambda_0$ will not 
be in the spectrum of $H$. We conclude that the frame for $\lambda_0$ will 
comprise $n-1$ of the eigenvectors $\{\scripty{r}_{\,n+j}^{\,+}\}_{j=1}^n$, along with 
one of the $\{\scripty{r}_{\,j}^{\,+}\}_{j=1}^n$. Since the exchanged vectors 
will lead to bases of different spaces, we can conclude that 
$\ell^-_{+\infty} (\lambda)$ is not continuous at $\lambda_0$. 

In order to clarify the discussion, we briefly consider the simple case $n=1$.
In this case, we have (for $\lambda < \nu_{\min}$) a single solution 
$\mathbf{p}_2^- (x;\lambda)$ that decays as $x \to - \infty$, and we can write 
\begin{equation*}
\mathbf{p}_2^- (x; \lambda) = c_{11} (\lambda) \mathbf{p}_1^+ (x; \lambda) 
+ c_{12} (\lambda) \mathbf{p}_2^+ (x; \lambda),
\end{equation*}
where $\mathbf{p}_1^+ (x; \lambda)$ decays as $x \to + \infty$ and 
$\mathbf{p}_2^+ (x; \lambda)$ grows as $x \to + \infty$. 
If $\lambda_0$ is not an eigenvalue of $H$ we must have $c_{12} (\lambda_0) \ne 0$,
and so 
\begin{equation*}
\begin{aligned}
\mathbf{p}_2^- (x; \lambda_0) &= 
c_{11} (\lambda_0) e^{\mu_1^+ (\lambda_0) x} (\scripty{r}_{\,1}^{\,+} 
+ \mathbf{E}_{1}^+ (x; \lambda_0))
+ c_{12} (\lambda_0) e^{\mu_2^+ (\lambda_0) x} (\scripty{r}_{\,2}^{\,+} 
+ \mathbf{E}_{2}^+ (x; \lambda_0)) \\
&= 
c_{12} (\lambda_0) e^{\mu_2^+ (\lambda_0) x} 
\Big(\scripty{r}_{\,2}^{\,+} + \mathbf{E}_{2}^+ (x; \lambda_0)
+ \frac{c_{11} (\lambda_0)}{c_{12} (\lambda_0)} e^{(\mu_1^+ (\lambda_0) - \mu_2^+ (\lambda_0)) x}
(\scripty{r}_{\,1}^{\,+} + \mathbf{E}_{1}^+ (x; \lambda_0)) \Big) \\
&=
c_{12} (\lambda_0) e^{\mu_2^+ (\lambda_0) x} 
(\scripty{r}_{\,2}^{\,+} + \tilde{\mathbf{E}}_{2}^+ (x; \lambda_0)), 
\end{aligned}
\end{equation*}
where $\tilde{\mathbf{E}}_{2}^+ (x; \lambda_0)) = \mathbf{O} ((1+|x|)^{-1})$.

We can view $\scripty{r}_{\,2}^{\,+} + \tilde{\mathbf{E}}_{2}^+ (x; \lambda_0)$ as 
a frame for $\ell^- (x; \lambda_0)$, and it immediately follows that as $x \to \infty$
the path of Lagrangian subspaces $\ell^- (x; \lambda_0)$ approaches the Lagrangian 
subspace with frame $\scripty{r}_{\,2}^{\,+}$ (denoted $\ell^-_{+ \infty} (\lambda_0)$
above). Moreover, since 
$\scripty{r}_{\,1}^{\,+}$ serves as a frame for $\ell^+_{\mathbf{R}} (\lambda_0)$ 
we can construct $\tilde{\mathcal{W}} (x; \lambda_0)$ from this pair. Taking the 
limit as $x \to \infty$ we see that 
\begin{equation*}
\tilde{\mathcal{W}}^+ (\lambda_0) := \lim_{x \to \infty} \tilde{\mathcal{W}} (x; \lambda_0)
= - \frac{r_1^+ + i \mu_2^+ (\lambda_0) r_1^+}{r_1^+ - i \mu_2^+ (\lambda_0) r_1^+}
\cdot \frac{r_2^+ - i \mu_1^+ (\lambda_0) r_2^+}{r_2^+ + i \mu_1^+ (\lambda_0) r_2^+}.
\end{equation*}  
By normalization, we can take both $r_1^+$ and $r_2^+$ to be $1$, and we must have 
$\mu_1^+ = - \mu_2^+$, so 
\begin{equation*}
\tilde{\mathcal{W}}^+ (\lambda_0) 
= - \frac{1 + i \mu_2^+ (\lambda_0)}{1 - i \mu_2^+ (\lambda_0)}
\cdot \frac{1 + i \mu_2^+ (\lambda_0)}{1 - i \mu_2^+ (\lambda_0)}
= - \frac{(1 + i \mu_2^+ (\lambda_0))^2}{(1 - i \mu_2^+ (\lambda_0))^2},
\end{equation*}
which can only be $-1$ if $\mu_2^+ (\lambda_0) = 0$ (a case ruled out 
in this calculation). 

On the other hand, if $\lambda_0 \in \sigma_{pt} (H)$ we will have $c_{12} (\lambda_0) = 0$
(and $c_{11} (\lambda_0) \ne 0$). In this case, the frame for $\ell^- (x; \lambda_0)$
will be $\scripty{r}_{\,1}^{\,+} + \tilde{\mathbf{E}}_{1}^+ (x; \lambda_0)$, and taking 
$x \to +\infty$ we see that $\ell^- (x; \lambda_0)$ will approach the Lagrangian subspace
with frame $\scripty{r}_{\,1}^{\,+}$. Recalling again that $\scripty{r}_{\,1}^{\,+}$ 
serves as a frame for $\ell^+_{\mathbf{R}} (\lambda_0)$ we see that        
\begin{equation*}
\begin{aligned}
\tilde{\mathcal{W}}^+ (\lambda_0) &:= \lim_{x \to \infty} \tilde{\mathcal{W}} (x; \lambda_0)
= - \frac{r_2^+ + i \mu_1^+ (\lambda_0) r_2^+}{r_2^+ - i \mu_1^+ (\lambda_0) r_2^+}
\cdot \frac{r_2^+ - i \mu_1^+ (\lambda_0) r_2^+}{r_2^+ + i \mu_1^+ (\lambda_0) r_2^+} \\
&= -1.
\end{aligned}
\end{equation*}  

For Example 1 in Section \ref{applications_section}, we have 
$\mu_1^+ (\lambda) = - \sqrt{1-\lambda}$ and 
$\mu_2^+ (\lambda) = + \sqrt{1 - \lambda}$. We know that in that example 
$\lambda_0 = 0$ is an eigenvalue, so we have $\tilde{\mathcal{W}}^+ (0) = -1$,
but for $\lambda \ne 0$, $|\lambda| < 1$, we have 
\begin{equation*}
\tilde{\mathcal{W}}^+ (\lambda) 
= - \frac{(1+i\sqrt{1-\lambda})^2}{(1-i\sqrt{1-\lambda})^2}. 
\end{equation*}
If we substitute $\lambda = 0$ into this relation, we obtain $+1$, 
and so we see that $\tilde{\mathcal{W}}^+ (\lambda)$ is not 
continous in $\lambda$ (at $\lambda = 0$ in this case). 


\end{document}